\newtheorem{thm}{Theorem}[section]
\newtheorem{cor}[thm]{Corollary}
\newtheorem{lem}[thm]{Lemma}
\newtheorem{prop}[thm]{Proposition}
\theoremstyle{definition}
\theoremstyle{remark}
\newtheorem{rem}[thm]{Remark}
\theoremstyle{conclusion}
\theoremstyle{question}
\numberwithin{equation}{section}
\newcommand{\p}{\partial}
\newcommand{\lee}{\leqslant}
\newcommand{\gee}{\geqslant}
\newcommand{\rt}{\rightarrow}
\begin{document}
\title[Tri-harmonic and bi-harmonic equations with negative exponents]{On properties of positive solutions to nonlinear tri-harmonic and bi-harmonic equations with negative exponents}

\author{Wei Dai, Jingze Fu}

\address{School of Mathematical Sciences, Beihang University (BUAA), Beijing 100191, P. R. China}
\email{weidai@buaa.edu.cn}

\address{School of Mathematical Sciences, Beihang University (BUAA), Beijing 100191, P. R. China}
\email{17377305@buaa.edu.cn}

\thanks{Wei Dai is supported by the NNSF of China (No. 11971049) and the Fundamental Research Funds for the Central Universities.}

\begin{abstract}
In this paper, we investigate various properties (e.g., nonexistence, asymptotic behavior, uniqueness and integral representation formula) of positive solutions to nonlinear tri-harmonic equations in $\mathbb{R}^{n}$ ($n\geq2$) and bi-harmonic equations in $\mathbb{R}^{2}$ with negative exponents. Such kind of equations arise from conformal geometry.
\end{abstract}
\maketitle {\small {\bf Keywords:} Tri-harmonic and bi-harmonic equations; Negative exponents; Nonexistence; Asymptotic behavior; Integral representation formula; Uniqueness; Positive solutions; Doubling lemma. \\

{\bf 2020 MSC} Primary: 35B53; Secondary: 53C18, 35B40, 35J91.}

\section{Introduction}

\subsection{Background and setting of the problem}
In this paper, we are concerned with the geometrically interesting nonlinear poly-harmonic equations with negative exponents:
\begin{equation}\label{PDE}
  (-\Delta)^{m}u+u^{-q}=0, \quad u>0 \qquad \text{in} \,\,\, \mathbb{R}^{n}
\end{equation}
with $m=2$ or $3$, $n\gee2$ and $q>0$, where $u\in C^{2m}(\mathbb{R}^{n})$.

\smallskip

When $m=1$, second order equations with negative exponents of the type \eqref{PDE} have been intensively studied for both bounded and unbounded domains because of its wide applications to physical models in the study of non-Newtonian fluids, boundary layer phenomena for viscous fluids, chemical heterogenous catalysts, glacial advance, etc., see \cite{A,CRT,SW} and the references therein.

\smallskip

The main purpose of this paper is to investigate various properties (e.g., nonexistence, asymptotic behavior, uniqueness and integral representation formula) of positive smooth solutions to PDEs \eqref{PDE}. Liouville type theorems for fractional order or higher order elliptic equations (i.e., nonexistence of nontrivial nonnegative solutions) in the whole space $\mathbb{R}^n$, in the half space $\mathbb{R}^n_+$ and in general domains $\Omega$ have been extensively studied via the method of moving planes (spheres), the method of scaling spheres and the integral estimates arguments based on Pohozaev identities (see \cite{CD,CDQ0,CDQ,CF,CFY,CGS,CL,CL0,CL1,CLL,CLO,CLM,CLZ,CQ,CY,DL,DLL,DLQ,DP,DPQ,DQ,DQ0,DQ3,DQ2,DQ5,DQ4,DQ1,DQW,DQZ,DW,Farina,F,FW,GNN1,JLX,Li,Lin,LD,LZ,LZ1,Pa,RW,Serrin,WX,ZCCY} and the references therein). These Liouville theorems, in conjunction with the blowing up and re-scaling arguments, are crucial in establishing a priori estimates and hence the existence of positive solutions to non-variational boundary value problems for a class of elliptic equations on bounded domains or on Riemannian manifolds with boundaries (see \cite{CDQ,CL0,DD,GS1,PQS}).

\subsection{Tri-harmonic equations in $\mathbb{R}^{n}$ with $n\gee2$}

We first consider the tri-harmonic equation \eqref{PDE} with $m=3$, which takes the following form:
\begin{equation}\label{PDE-3}
  \Delta^{3}u=u^{-q}, \quad u>0 \qquad \text{in} \,\,\, \mathbb{R}^{n}
\end{equation}
with $n\gee 2$ and $q>0$, where $u\in C^{6}(\mathbb{R}^{n})$.

\smallskip

Equations of the form $(-\Delta)^{m}u+u^{-\frac{4m-k}{k}}=0$ in $\mathbb{R}^{2m-k}$ with $m\gee2$ and $k=1,\cdots,2m-1$ come from the problem of prescribing $Q$-curvature on $\mathbb{S}^{2m-k}$, which is associated with the conformally covariant GJMS operator with the principle part $(-\Delta_{g})^{m}$, discovered by Graham, Jenne, Mason and Sparling in \cite{GJMS} (see also \cite{Juhl,N}). The GJMS operator of order $2m$ with $m\gee2$ is a high-order elliptic operator analogue with the well-known conformal Laplacian in the problem of prescribing scalar curvature,  which is given by
\begin{equation}\label{GJMS}
P_{2m,g_{\mathbf{S}^{n}}}\left(\cdot\right)=\prod_{k=1}^{m}\left[-\Delta_{g_{\mathbf{S}^{n}}}+\left(\frac{n}{2}-k\right)\left(\frac{n}{2}+k-1\right)\right]\left(\cdot\right),
\end{equation}
where the dimension $n\gee3$ and $(\mathbf{S}^{n},g_{\mathbf{S}^{n}})$ is the $n$-sphere equipped with the standard metric $g_{\mathbf{S}^{n}}$. The GJMS operator is conformally covariant in the sense that if we change the standard metric $g_{\mathbf{S}^{n}}$ to its conformal metric $\hat{g}:=v^{\frac{4}{n-2m}}g_{\mathbf{S}^{n}}$ for some positive smooth function $v$ on the $n$-sphere $\mathbf{S}^{n}$, then $P_{2m,g_{\mathbf{S}^{n}}}$ and $P_{2m,\hat{g}}$ satisfy
\begin{equation}\label{GJMS1}
P_{2m,\hat{g}}(\phi)=v^{-\frac{n+2m}{n-2m}}P_{2m,g_{\mathbf{S}^{n}}}(v\phi)
\end{equation}
for any positive smooth function $\phi$ on $\mathbf{S}^{n}$. In particular, if we set $\phi\equiv1$, then \eqref{GJMS1} yields that
\begin{equation}\label{GJMS2}
P_{2m,g_{\mathbf{S}^{n}}}(v)=P_{2m,\hat{g}}(1)v^{\frac{n+2m}{n-2m}}.
\end{equation}
It follows from (1.12) in \cite{Juhl} (see also \cite{N}) that $P_{2m,\hat{g}}(1)=\left(\frac{n}{2}-m\right)Q_{2m,\hat{g}}$ for some scalar curvature quantity $Q_{2m,\hat{g}}$. Consequently, \eqref{GJMS2} implies that
\begin{equation}\label{GJMS4}
P_{2m,g_{\mathbf{S}^{n}}}(v)=\left(\frac{n}{2}-m\right)Q_{2m,\hat{g}}v^{\frac{n+2m}{n-2m}}.
\end{equation}
For any given $k=1,\cdots,2m-1$, let $n=2m-k$. Then we get from \eqref{GJMS4} that
\begin{equation}\label{GJMS3}
P_{2m,g_{\mathbf{S}^{2m-k}}}(v)=-\frac{k}{2}Q_{2m,\hat{g}}v^{-\frac{4m-k}{k}}.
\end{equation}
In order to study the structure of the solution set of \eqref{GJMS3}, let us only take the case when $Q_{2m,\hat{g}}$ is a constant into account. By an appropriate scaling, we may assume that $\frac{k}{2}Q_{2m,\hat{g}}=\pm 1$, and hence \eqref{GJMS3} becomes
\begin{equation}\label{GJMS5}
P_{2m,g_{\mathbf{S}^{2m-k}}}(v)=\mp v^{-\frac{4m-k}{k}}.
\end{equation}
Let us denote by $\pi: \, \mathbf{S}^{2m-k}\rightarrow \mathbb{R}^{2m-k}$ the stereographic projection and define
\begin{equation}\label{GJMS6}
u(x):=v\left(\pi^{-1}(x)\right)\left(\frac{1+|x|^{2}}{2}\right)^{\frac{k}{2}}, \qquad \forall \, x\in\mathbb{R}^{2m-k}.
\end{equation}
By Proposition 1 in \cite{Gra}, we can obtain that
\begin{equation}\label{GJMS7}
\left(\frac{2}{1+|x|^{2}}\right)^{-\frac{4m-k}{2}}(-\Delta)^{m}u(x)=\left(P_{2m,g_{\mathbf{s}^{2m-k}}}(v) \circ \pi^{-1}\right)(x), \qquad \forall \, x\in\mathbb{R}^{2m-k}.
\end{equation}
As a consequence, combining \eqref{GJMS5}, \eqref{GJMS6} and \eqref{GJMS7} yields that
\begin{equation}\label{GJMS8}
(-\Delta)^{m} u=\mp u^{-\frac{4m-k}{k}} \qquad \text{in} \,\,\, \mathbb{R}^{2m-k}.
\end{equation}
By choosing the minus sign in \eqref{GJMS8}, we arrive at
\begin{equation}\label{GJMS9}
(-\Delta)^{m}u+u^{-\frac{4m-k}{k}}=0 \qquad \text{in} \,\,\, \mathbb{R}^{2m-k}
\end{equation}
for every $k=1,\cdots,2m-1$. In particular, when $m=3$, then \eqref{GJMS9} takes the form of equation \eqref{PDE-3}, that is,
\begin{equation}\label{GJMS10}
  \Delta^{3}u=u^{-\frac{12-k}{k}} \qquad \text{in} \,\,\, \mathbb{R}^{6-k}
\end{equation}
for every $k=1,\cdots,5$.

\medskip

On the one hand, the existence and non-existence results, maximum principle type results and asymptotic behavior of radially symmetric solutions for tri-harmonic equations with minus sign:
\begin{equation}\label{PDE-3-}
  \Delta^{3}u=-u^{-q}, \quad u>0 \qquad \text{in} \,\,\, \mathbb{R}^{n}
\end{equation}
have been studied in \cite{DN1,KNS,NNPY}. On the other hand, the existence results, classification results and maximum principle type results for tri-harmonic equations \eqref{PDE-3} with plus sign has been investigated in \cite{FX,KNS,N,NNPY}. In \cite{KNS}, among other things, Kusano, Naito and Swanson established the following existence result via the Schauder-Tychonoff fixed point theorem for both \eqref{PDE-3} and \eqref{PDE-3-}.
\begin{prop}[Theorem 1 in \cite{KNS}]\label{prop0}
Assume $n\gee 3$ and $q>\frac{1}{2}$. Then both the equations \eqref{PDE-3} and \eqref{PDE-3-} possess positive solutions.
\end{prop}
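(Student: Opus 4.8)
The plan is to construct, for each choice of sign, an entire \emph{radial} positive solution by a fixed-point argument, reducing the sixth-order radial ODE to an integral equation on the half-line. Write $u=u(r)$ with $r=|x|$, let $L\phi:=\phi''+\tfrac{n-1}{r}\phi'=r^{1-n}(r^{n-1}\phi')'$ be the radial Laplacian (so $\Delta u=Lu$ for radial $u$), and let
\begin{equation*}
 (L_0^{-1}\psi)(r):=\int_0^r t^{1-n}\int_0^t s^{n-1}\psi(s)\,ds\,dt=\frac{1}{n-2}\int_0^r\psi(s)\bigl(s-s^{n-1}r^{2-n}\bigr)\,ds
\end{equation*}
be the inverse of $L$ with $\phi'(0^+)=0$; the second form follows from Fubini and uses $n\geq3$, and its kernel $k_1(r,s)=\tfrac{1}{n-2}(s-s^{n-1}r^{2-n})$ obeys $0\leq k_1(r,s)\leq\tfrac{s}{n-2}$ for $0\leq s\leq r$. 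Since $1$ and $r^4$ lie in the kernel of $\Delta^3$, it suffices to find a positive fixed point of
\begin{equation*}
 (Tu)(r):=c_0+c_4 r^4+\sigma\,\bigl(L_0^{-3}(u^{-q})\bigr)(r),\qquad \sigma\in\{+1,-1\},
\end{equation*}
with $c_0,c_4>0$ to be chosen and $L_0^{-3}:=L_0^{-1}\circ L_0^{-1}\circ L_0^{-1}$ (which has nonnegative, order-preserving kernel): any such fixed point is $C^6$ by one round of the bootstrap below, hence a radial solution of $\Delta^3u=\sigma u^{-q}$, corresponding to \eqref{PDE-3} for $\sigma=+1$ and to \eqref{PDE-3-} for $\sigma=-1$.

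The crux is the a priori bound obtained by iterating three elementary facts: $L_0^{-1}(\psi)(r)\leq\tfrac{1}{n-2}\int_0^\infty s\,\psi(s)\,ds$ for $\psi\geq0$, $L_0^{-1}(1)=\tfrac{r^2}{2n}$, and $L_0^{-1}(r^2)=\tfrac{r^4}{4(n+2)}$. These combine to
\begin{equation*}
 0\leq\bigl(L_0^{-3}\psi\bigr)(r)\leq\frac{r^4}{8n(n+2)(n-2)}\int_0^\infty s\,\psi(s)\,ds\qquad(\psi\geq0).
\end{equation*}
Taking $\psi(s)=(a+bs^4)^{-q}$ with $a,b>0$ and rescaling $s$, one gets $0\leq L_0^{-3}\psi(r)\leq B(a,b)r^4$ with $B(a,b)=\frac{\kappa(q)}{8n(n+2)(n-2)}\,a^{1/2-q}b^{-1/2}$ and $\kappa(q)=\int_0^\infty\tau(1+\tau^4)^{-q}\,d\tau$. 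The finiteness of $\kappa(q)$ is exactly the condition $q>\tfrac12$ (the integrand decays like $\tau^{1-4q}$), and $n\geq3$ entered through the explicit kernel. In particular $B(\tfrac R2,\tfrac R2)=\kappa(q)(R/2)^{-q}/\bigl(8n(n+2)(n-2)\bigr)\to0$ as $R\to\infty$.

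Fix $R>0$ large enough that $B(\tfrac R2,\tfrac R2)\leq\tfrac R2$, put $c_0=c_4=R$, and set
\begin{equation*}
 K:=\Bigl\{\,u\in C([0,\infty)):\ \tfrac R2(1+r^4)\leq u(r)\leq 2R(1+r^4)\text{ for all }r\geq0\,\Bigr\},
\end{equation*}
a nonempty closed convex subset of $C([0,\infty))$ with the topology of uniform convergence on compacta. For $u\in K$ we have $u(r)^{-q}\leq(R/2)^{-q}(1+r^4)^{-q}$, hence $0\leq L_0^{-3}(u^{-q})(r)\leq B(\tfrac R2,\tfrac R2)r^4\leq\tfrac R2 r^4$; since also $(Tu)(0)=R\in[\tfrac R2,2R]$, a direct check shows $\tfrac R2(1+r^4)\leq(Tu)(r)\leq2R(1+r^4)$ for \emph{both} signs. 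Thus $T(K)\subseteq K$. Here lies the main obstacle: for $\sigma=-1$ the subtracted term threatens positivity, and invariance of $K$ — equivalently, that the solution stays bounded below by $\tfrac R2(1+r^4)>0$ — is secured only by taking $R$ large enough that $L_0^{-3}(u^{-q})$ is dominated by the $r^4$ ansatz; for $\sigma=+1$ positivity is automatic and any $c_0,c_4>0$ work.

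Finally, $T:K\to K$ is continuous for the compact-open topology (dominated convergence, using $u\geq\tfrac R2>0$ so $u^{-q}$ is uniformly bounded on $K$ and $t\mapsto t^{-q}$ is uniformly continuous on the relevant range), and $T(K)$ is equicontinuous — in fact uniformly Lipschitz — on every $[0,R']$, because $\tfrac{d}{dr}L_0^{-3}(u^{-q})(r)=r^{1-n}\int_0^r s^{n-1}\bigl(L_0^{-2}(u^{-q})\bigr)(s)\,ds$ is bounded there by a constant depending only on $R,R',n,q$. By the Arzela--Ascoli theorem $T(K)$ is relatively compact in $C([0,\infty))$, so the Schauder--Tychonoff fixed-point theorem gives $u\in K$ with $Tu=u$. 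This $u>0$ is radial and solves the integral equation; since $t\mapsto t^{-q}$ is smooth on $(0,\infty)$ and $u\geq\tfrac R2$, a bootstrap in the integral equation upgrades $u$ to $C^\infty(\mathbb{R}^n)\subset C^6(\mathbb{R}^n)$, so $\Delta^3u=\sigma u^{-q}$ with $u>0$ in $\mathbb{R}^n$. Applying this for $\sigma=+1$ and $\sigma=-1$ yields the two asserted solutions.
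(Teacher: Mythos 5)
Your construction is correct, and it is essentially the argument behind the cited result: the paper does not prove Proposition \ref{prop0} but quotes Theorem 1 of \cite{KNS}, which is likewise a Schauder--Tychonoff fixed-point construction of radial solutions pinched between multiples of $1+|x|^{4}$, with $q>\tfrac12$ entering precisely as the integrability condition \eqref{g0} and $n\gee 3$ through the explicit kernel of the inverse radial Laplacian. Your a priori bound $L_0^{-3}\psi\lee \frac{r^4}{8n(n+2)(n-2)}\int_0^\infty s\,\psi(s)\,ds$ and the choice of $R$ making the perturbation subordinate to the $r^4$ ansatz (so that positivity survives the minus sign) reproduce the growth estimate \eqref{g1} stated after the proposition.
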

As a special case of Theorem 1 in \cite{KNS}, if $n\gee 3$ and
\begin{equation}\label{g0}
\int_{0}^{+\infty}t\left(1+t^{4}\right)^{-q}dt<+\infty,
\end{equation}
then both the equations \eqref{PDE-3} and \eqref{PDE-3-} possess infinitely many positive radial solutions satisfying the following growth property at $\infty$:
\begin{equation}\label{g1}
C_{1}\left(1+|x|^{4}\right) \leqslant u(x) \leqslant C_{2}\left(1+|x|^{4}\right) \qquad \text { in } \, \mathbb{R}^{n},
\end{equation}
where $C_{1}$ and $C_{2}$ are positive constants. One can easily verify that the integrability condition \eqref{g0} holds if and only if $q>\frac{1}{2}$. In \cite{NNPY}, among other things, Ng\^{o}, Nguyen, Phan and Ye also established the following existence result for radial solutions to equation \eqref{PDE-3}.
\begin{prop}[Proposition 4.5 in \cite{NNPY}]\label{prop1}
For any $n\gee 1$ and $-1\lee q<+\infty$, the equation \eqref{PDE-3} has infinitely many positive radial solutions.
\end{prop}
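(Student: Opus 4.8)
The plan is to reduce \eqref{PDE-3} to a radial Volterra system and to exhibit, for each $\gamma>0$, a globally defined positive radial solution $u_\gamma$ with $u_\gamma(0)=\gamma$; since distinct values of $\gamma$ force distinct solutions, this already yields infinitely many. Write $u=u(r)$ with $r=|x|$, so that $\Delta u=u''+\tfrac{n-1}{r}u'=r^{1-n}\bigl(r^{n-1}u'\bigr)'$, and set $u_0:=u$, $u_1:=\Delta u_0$, $u_2:=\Delta u_1$. Then \eqref{PDE-3} is equivalent to the triangular system $\Delta u_2=u_0^{-q}$, $\Delta u_1=u_2$, $\Delta u_0=u_1$, and smoothness at the origin is equivalent to $u_i'(0)=0$ for $i=0,1,2$. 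Fixing the data $u_0(0)=\gamma>0$ and $u_1(0)=u_2(0)=0$, one rewrites each equation as a Volterra integral equation, e.g. $u_2(r)=\int_0^r s^{1-n}\!\int_0^s t^{n-1}u_0(t)^{-q}\,dt\,ds$ and likewise $u_1$ in terms of $u_2$ and $u_0$ in terms of $u_1$; the conditions $u_i'(0)=0$ render the $r^{1-n}$ weight harmless, and since $s\mapsto s^{-q}$ is smooth for $s>0$, a contraction-mapping argument in $C([0,\rho])$ gives a unique local $C^6$ solution on some maximal interval $[0,R)$.

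Next I would show that positivity comes for free as long as the solution exists. Let $r_*:=\sup\{\rho\in[0,R):u_0>0\text{ on }[0,\rho]\}$; by continuity $r_*>0$. On $[0,r_*)$ one has $u_0^{-q}>0$, so from $\bigl(r^{n-1}u_2'\bigr)'=r^{n-1}u_0^{-q}>0$ and $u_2'(0)=0$ we get $u_2'>0$ on $(0,r_*)$, hence $u_2$ is strictly increasing and $u_2\gee0$; applying the same monotonicity argument successively to $\Delta u_1=u_2\gee0$ and $\Delta u_0=u_1\gee0$ gives $u_1\gee0$ and $u_0$ strictly increasing, so $u_0\gee\gamma>0$ on $[0,r_*)$. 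By continuity $u_0(r_*)\gee\gamma$ whenever $r_*<R$, contradicting the definition of $r_*$; hence $r_*=R$ and $u_0\gee\gamma$ throughout $[0,R)$.

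The decisive step is global existence, $R=+\infty$, and it is here that $q\gee-1$ is used. If $q\gee0$, then $0<u_0^{-q}\lee\gamma^{-q}$ (with $\gamma^0:=1$) on $[0,R)$, and three integrations of the Volterra system give a bound $u_0(r)\lee\gamma+C(n,q,\gamma)\,r^6$, precluding blow-up in finite radius. If $-1\lee q<0$, write $u_0^{-q}=u_0^{|q|}$ with $0<|q|\lee1$; since $u_0$ is increasing, three integrations yield $M(r)\lee\gamma+C(n)\,r^6 M(r)^{|q|}$ with $M(r):=\max_{[0,r]}u_0=u_0(r)$, and for $|q|<1$ this algebraic inequality keeps $M$ finite on every bounded interval, while for $q=-1$ the equation $\Delta^3u=u$ is linear and global solvability is classical. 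In all cases $R=+\infty$, producing for each $\gamma>0$ a positive radial solution $u_\gamma\in C^6(\mathbb{R}^n)$ of \eqref{PDE-3}; as $u_\gamma(0)=\gamma$, these are pairwise distinct. (For $q\neq-1$ one may alternatively observe that a single positive radial solution $u$ generates the one-parameter family $x\mapsto\lambda^{-6/(1+q)}u(\lambda x)$, $\lambda>0$, of positive radial solutions.)

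I expect the main obstacle to be the global-existence step: the shooting construction only produces local solutions, and although positivity is automatic, it is entangled with continuation, so one must extract from $q\gee-1$ the a priori (at most linear) growth of $u^{-q}$ relative to $u$ that rules out finite-radius blow-up, with the borderline case $q=-1$ requiring the separate linear argument.
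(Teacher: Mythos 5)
Proposition \ref{prop1} is stated in the paper without proof: it is imported verbatim as Proposition 4.5 of \cite{NNPY}, and the only technique the paper alludes to for such existence results is the Schauder--Tychonoff fixed point argument of Kusano--Naito--Swanson behind Proposition \ref{prop0}. Your shooting/Volterra construction is therefore an independent argument, and I believe it is correct. The three ingredients check out: the triangular Volterra system with data $u_{0}(0)=\gamma>0$, $u_{1}(0)=u_{2}(0)=0$ gives local existence by contraction (the weight $s^{1-n}$ is indeed harmless since $s^{1-n}\int_{0}^{s}t^{n-1}\,dt=s/n$, so the triple integral operator has norm $O(\rho^{6})$ on $C([0,\rho])$, and $t\mapsto t^{-q}$ is locally Lipschitz on a neighborhood of $\gamma$); the sign $\Delta u_{2}=u_{0}^{-q}>0$ propagates monotonicity down the chain, yielding $u_{0}\geq\gamma$ on the whole interval of existence, so positivity never obstructs continuation; and the a priori bound $u_{0}(r)\leq\gamma+Cr^{6}\max\{\gamma^{-q},\,u_{0}(r)^{|q|}\}$ rules out finite-radius blow-up exactly when $q\geq-1$, with the borderline linear case $q=-1$ handled by Gronwall. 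Two minor points you should write out rather than assert: (1) that $u_{i}'(0)=0$ makes the radial profiles $C^{6}$ functions of $x$ at the origin requires the explicit expansion $u_{i}(r)=u_{i}(0)+\tfrac{1}{2n}(\Delta u_{i})(0)r^{2}+o(r^{2})$ coming from the Volterra formula (or an elliptic bootstrap), not merely the vanishing of the first derivative; (2) the alternative rescaling family $\lambda^{-6/(1+q)}u(\lambda x)$ gives infinitely many \emph{distinct} solutions only if $u$ is not exactly homogeneous of degree $6/(1+q)$ --- your $\gamma$-parametrization sidesteps this, so keep it as the primary argument. As a consistency check with the rest of the paper, note that your solutions have $\Delta^{2}u$ increasing from $0$, hence $\lim_{r\to+\infty}\Delta^{2}u(r)\in(0,+\infty]$ and at least quartic growth by Theorem \ref{thm10}(ii)(a); they therefore do not conflict with the nonexistence statements of Theorem \ref{thm1}, which concern only solutions with $u=o(|x|^{4})$ at $\infty$.
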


\medskip

For the classification of positive solutions to the corresponding integral equations associated with \eqref{PDE}, we refer the reader to \cite{FX,HW,Li,Liu,N,Xu,Xu1}. In fact, when $m=3$, it was already proved by Feng and Xu in \cite{FX} that, suppose $u\in C^{6}(\mathbb{R}^{5})$ is a positive solution to
\begin{equation}\label{e6}
  u(x)=\frac{1}{64\pi^{2}}\int_{\mathbb{R}^{5}}|x-y|u^{-q}(y)dy,
\end{equation}
then $q=11$, and $u$ must assume the unique form $u(x)=c(1+|x|^{2})^{\frac{1}{2}}$ up to dilations and translations. For more literature on poly-harmonic and fractional order problems (with negative exponents), please refer to \cite{AMR,BR,CX,DFG,DN,FKT,Gue,GW,GW1,HW,KR,Lai1,Lai2,LY,MR,MW,MW1,MX,N0,N,NNP,NNP1,NX,NX1,WY} and the references therein.

\bigskip

We can prove various properties (including nonexistence, asymptotic behavior, uniqueness and integral representation formula) of positive entire solutions to the tri-harmonic equation \eqref{PDE} with negative exponents, that is, $m=3$ and $n\gee2$ in \eqref{PDE}.

\begin{thm}\label{thm1}
Assume $m=3$, $n\gee2$ and $q>0$. Then, we have \\
(i) \, For any $n\gee2$, \eqref{PDE} admits no positive entire solution $u\in C^{6}(\mathbb{R}^{n})$ satisfying $u=o(|x|^{4})$ as $|x|\rightarrow+\infty$ if $0<q<2$. \\
(ii) \, For $n=2,3,4$ and any $q>0$, \eqref{PDE} admits no positive entire solution $u\in C^{6}(\mathbb{R}^{n})$ satisfying $u=o(|x|^{4})$ as $|x|\rightarrow+\infty$. \\
(iii) \, For any $n\gee2$ and $q>0$, suppose $u$ is a positive $C^{6}$ entire solution to \eqref{PDE} satisfying $u=o(|x|^{4})$ at $\infty$, then $\bar{u}(r)\gee cr^{\frac{4}{q}}$ for some positive constant $c>0$ and any $r\gee0$, where $\bar{u}(r)$ denotes the spherical average of $u$ over $\partial B_{r}(0)$. Consequently, if $q\gee 2$, then \eqref{PDE} admits no positive entire solution $u\in C^{6}(\mathbb{R}^{n})$ satisfying $u=o\left(|x|^{\frac{4}{q}}\right)$ as $|x|\rightarrow+\infty$. Moreover, if $q\gee 2$, then \eqref{PDE} admits no positive radially symmetric entire solution $u\in C^{6}(\mathbb{R}^{n})$ satisfying $u=o(r^{4})$ at $\infty$ and
\begin{equation}\label{e8}
  \liminf_{r\rightarrow+\infty}\frac{u(r)}{r^{\frac{4}{q}}}=0.
\end{equation}
(iv) \, For any $q\gee 2$, suppose $u$ is a positive $C^{6}$ entire solution to \eqref{PDE} satisfying $u=o(|x|^{4})$ at $\infty$, then $\bar{u}(r)\lee Cr^{2}$ for some constant $C>0$ and any $r\gee 1$ when $n\gee 2$, and $\bar{u}(r)\gee cr$ for some positive constant $c>0$ and all $r\gee 0$ when $n=5$. Moreover, if $u$ is a positive radially symmetric $C^{6}$ entire solution to \eqref{PDE} satisfying $u=o(r^{4})$ at $\infty$, then $u(r)\lee Cr^{2}$ for any $r:=|x|\gee 1$ when $n\gee 2$, and $u(r)\gee cr$ for all $r:=|x|\gee 0$ when $n=5$, that is, $\limsup\limits_{|x|\rightarrow+\infty}\frac{u(x)}{|x|^{2}}\in[0,+\infty)$ for every $n\gee 2$ and $\liminf\limits_{|x|\rightarrow+\infty}\frac{u(x)}{|x|}\in(0,+\infty]$ for $n=5$. \\
(v) \, For any $n\gee2$ and $q>0$, \eqref{PDE} admits no positive entire solution $u\in C^{6}(\mathbb{R}^{n})$ satisfying $u=o\left(|x|^{\frac{6}{q+1}}\right)$ as $|x|\rightarrow+\infty$. Moreover, \eqref{PDE} admits no positive radially symmetric entire solution $u\in C^{6}(\mathbb{R}^{n})$ satisfying
\begin{equation}\label{a24}
  \limsup_{r\rightarrow+\infty}\frac{u(r)}{r^{\frac{6}{q+1}}}<+\infty \qquad \text{and} \qquad \liminf_{r\rightarrow+\infty}\frac{u(r)}{r^{\frac{6}{q+1}}}=0.
\end{equation}
(vi) \, For any $n\gee2$ and $q>0$, if $u$ has at most linear growth (uniformly) at $\infty$, that is,
\begin{equation}\label{e9}
  \lim _{|x|\rightarrow+\infty}\frac{u(x)}{|x|}=\alpha\in[0,+\infty),
\end{equation}
then $q\gee 5$. In addition, we have $\alpha>0$ if $n=5$. Furthermore, if $q=11$, then, up to dilations and translations, $u$ must assume the unique form: $u(x)=c(1+|x|^{2})^{\frac{1}{2}}$. \\
(vii) \, For any $n\gee5$ and $q>5$, if $u\in C^{6}(\mathbb{R}^{n})$ is a positive radially symmetric entire solution to \eqref{PDE} such that $\liminf\limits_{|x|\rightarrow+\infty}\frac{u(x)}{|x|}\in(0,+\infty]$ and $\lim\limits_{r\rightarrow+\infty}u''(r)=\rho\in\mathbb{R}$, then $\rho\gee 0$. If $\rho>0$, then $u$ has exactly quadratic growth at $\infty$, that is,
\begin{equation}\label{e1}
  \lim _{|x| \rightarrow+\infty}\frac{u(x)}{|x|^{2}}=\frac{\rho}{2}\in(0,+\infty).
\end{equation}
If $\rho=0$, then $u$ has exactly linear growth at $\infty$, that is,
\begin{equation}\label{a31}
  \lim _{|x| \rightarrow+\infty}\frac{u(x)}{|x|}=
  -\frac{1}{n-1}\int_{0}^{+\infty}t^{2}\left(\Delta^{2}\bar{u}(t)-\frac{n-3}{t}\left(\Delta\bar{u}\right)'(t)\right)dt\in(0,+\infty).
\end{equation}
(viii) \, If $n=5$, $q\gee 2$ and $u\in C^{6}(\mathbb{R}^{5})$ is a positive entire solution to \eqref{PDE} satisfying $u=o(|x|^{4})$ as $|x|\rightarrow+\infty$, then
\begin{equation}\label{a32-0}
  \bar{u}''(r)>0, \quad \bar{u}'''(r)<0, \qquad \forall \, r>0.
\end{equation}
Moreover, if assume further $q>5$ and $u$ is radially symmetric, then either $u$ has exactly quadratic growth at $\infty$ or $u$ has exactly linear growth at $\infty$. \\
(ix) \, If $n=5$, $q>6$ and the positive entire solution $u$ has exactly linear growth at $\infty$, then $u$ has the following integral representation:
\begin{equation}\label{e5}
  u(x)=\frac{1}{64\pi^{2}}\int_{\mathbb{R}^{5}}|x-y|u^{-q}(y)dy+\gamma,
\end{equation}
where $\gamma$ is a constant. Moreover, $\gamma<0$ if $6<q<11$, $\gamma=0$ if $q=11$ and $\gamma>0$ if $q>11$. \\
(x) \, Assume $n\gee 3$ but $n\neq4, 6$, $q>\frac{1}{2}$ and $u$ is a radially symmetric positive entire solution to \eqref{PDE} such that $\liminf\limits_{|x|\rightarrow+\infty}\frac{u(x)}{|x|^{4}}\in(0,+\infty]$, then $\lim\limits_{|x|\rightarrow+\infty}\frac{u(x)}{|x|^{4}}=\frac{1}{8n(n+2)}\lim\limits_{|x|\rightarrow+\infty}\Delta^{2}u(x)\in(0,+\infty)$. Conversely, for any $n\gee 2$ and $q>0$, suppose $u$ is a radially symmetric positive entire solution to \eqref{PDE}. We have: (a) If $\lim\limits_{|x|\rightarrow+\infty}\Delta^{2}u(x)\in(0,+\infty]$, then $\liminf\limits_{|x|\rightarrow+\infty}\frac{u(x)}{|x|^{4}}\in(0,+\infty]$. Furthermore, if assume further $n\gee 3$ but $n\neq4, 6$ and $q>\frac{1}{2}$, then $\lim\limits_{|x|\rightarrow+\infty}\frac{u(x)}{|x|^{4}}=\frac{1}{8n(n+2)}\lim\limits_{|x|\rightarrow+\infty}\Delta^{2}u(x)\in(0,+\infty)$. (b) If $\lim\limits_{|x|\rightarrow+\infty}\Delta^{2}u(x)=0$, then $u$ satisfy the sub poly-harmonic property and hence has at most quadratic growth at $\infty$.
\end{thm}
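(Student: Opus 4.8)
The ten assertions of Theorem~\ref{thm1} are driven by a common engine. The first move is to pass to the spherical average $\bar u(r)$ over $\partial B_r(0)$ and apply Jensen's inequality to the convex function $t\mapsto t^{-q}$, which yields the differential inequality $\Delta^{3}\bar u\ge \bar u^{-q}>0$ together with the radial representation $\Delta w=r^{1-n}(r^{n-1}w')'$ for iterated integration. From here I would first extract the \emph{sign structure} (the sub/super poly-harmonic properties): assuming only $u=o(|x|^{4})$, integrate the identity for $\Delta^{3}\bar u$ outward and observe that the growth restriction excludes the "bad" sign of the leading iterated integral, forcing $\Delta^{2}\bar u\ge 0$ and monotonicity of $\Delta\bar u$ and $\bar u$ (and $\bar u''>0$, $\bar u'''<0$ in part~(viii)). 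This already yields several of the nonexistence statements once coupled with a growth lower bound, and it is what gives part~(x)(b) ("sub poly-harmonic property $\Rightarrow$ at most quadratic growth").

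Second, I would produce the \emph{lower bounds}: $\bar u(r)\ge c r^{4/q}$ in part~(iii), and the critical bound behind part~(v). Feeding $\Delta^{3}\bar u\ge \bar u^{-q}$ into the triple iterated integral and bootstrapping does it: if $\bar u$ were below $r^{4/q}$ along a sequence, then $\bar u^{-q}$ is large enough there that three integrations contradict the bound itself; the exponent $4/q$ is exactly the scale at which $\bar u^{-q}\sim r^{-4}$ balances the three Laplacians, while $6/(q+1)$ in part~(v) is the scaling exponent of the equation $u_\lambda(x)=\lambda^{-6/(q+1)}u(\lambda x)$. For the "small growth" nonexistence (parts~(i), (ii), (v)) I would combine this with the doubling lemma: rescaling $u_\lambda$ around points selected by the doubling lemma produces, in the limit, an entire solution with a normalized value at the origin, clashing with the lower bounds/sign structure. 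The dimension restriction $n=2,3,4$ in~(ii) and the recurring role of $n=5$ come from integrability thresholds of $r^{-4}$ and $r^{-q}$ in $\mathbb R^{n}$ (notably $\mathbb R^{5}$ is borderline since there $|x|$ is the fundamental solution of $\Delta^{3}$), and for $q\ge 2$ part~(ii) may additionally use a Pohozaev identity on large balls with boundary terms absorbed by $u=o(|x|^{4})$.

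Third, for the \emph{upper bounds and exact growth rates} (parts~(iv), (vii), (viii), (x)): when $q\ge 2$ the lower bound makes $u^{-q}\le Cr^{-4}$, so the lowest-order averaged quantities $\Delta^{2}\bar u$, then $\Delta\bar u$, then $\bar u$, are controlled by iterated integrals of a nearly-integrable density; monotonicity from the sign structure then pins $\bar u\le Cr^{2}$ and, for $n=5$, $\bar u\ge cr$. Passing to $u''(r)$ and to $\Delta^{2}u$ and using $\lim u''=\rho$ separates the "exactly quadratic" ($\rho>0$, limit $\rho/2$) from the "exactly linear" ($\rho=0$) alternatives and identifies the improper-integral formula \eqref{a31}; the $\Delta^{2}u\sim 8n(n+2)|x|^{2}$-type computation gives the constant $\tfrac{1}{8n(n+2)}$ in~(x), with $n\neq 4,6$ a resonance restriction when inverting the three Laplacians applied to the $\sim r^{-4q}$ density. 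For the \emph{uniqueness/representation} statements (parts~(vi), (ix)), once $u$ has at most linear growth with $q>6$ one checks $u^{-q}\in L^{1}(\mathbb R^{5})$ and $\int|y|u^{-q}(y)\,dy<\infty$, so $w(x):=\tfrac{1}{64\pi^{2}}\int_{\mathbb R^{5}}|x-y|u^{-q}(y)\,dy$ is well defined with $\Delta^{3}w=u^{-q}$ and linear growth; then $u-w$ is tri-harmonic with at most linear growth in $\mathbb R^{5}$, hence affine, and positivity/the sign structure forces it to be a constant $\gamma$, giving \eqref{e5}. A Pohozaev identity applied to \eqref{e5} relates $\gamma$, $q$ and $\int u^{-q}$ and shows $\operatorname{sign}\gamma=\operatorname{sign}(q-11)$; at $q=11$ we get $\gamma=0$, so $u$ solves the pure integral equation \eqref{e6} and the Feng--Xu classification in \cite{FX} gives $u(x)=c(1+|x|^{2})^{1/2}$, which also settles the $q=11$ part of~(vi).

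The main obstacle: I expect the hardest points to be (a) making the bootstrap for $\bar u\ge cr^{4/q}$ and the subsequent upper bound $\bar u\le Cr^{2}$ genuinely rigorous, since the passage between the averaged function $\bar u$ and pointwise/radial statements, and the bookkeeping of which iterated integrals converge, is delicate exactly in the borderline dimensions; and (b) the exact-growth dichotomy in~(vii)--(viii) together with the identification of the linear coefficient in \eqref{a31}, which requires proving convergence of the relevant improper integral and controlling the error in the ODE asymptotics uniformly. The dimension threading ($n=5$ special, $n\neq 4,6$ in~(x), $n=2,3,4$ in~(ii)) running through every part is the source of most of the technical friction.
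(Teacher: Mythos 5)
Your overall skeleton (Jensen on spherical averages, sub poly-harmonicity, iterated radial integration, the fundamental solution $|x|$ of $\Delta^{3}$ in $\mathbb{R}^{5}$, Liouville plus a Pohozaev-type identity for the representation formula and the $q=11$ classification) matches the paper for parts (vi), (vii), (ix), (x) and for the exponents $4/q$ and $6/(q+1)$. But there are concrete problems. First, the sign structure is stated backwards: for $\Delta^{3}u=u^{-q}>0$ the property furnished by Lemma \ref{lem5} is $\Delta u>0$ \emph{and} $\Delta^{2}u<0$, not ``$\Delta^{2}\bar u\ge 0$''. This is not cosmetic: the entire quantitative engine of parts (ii), (iii), (iv) and (viii) is the two-sided decay of $\bar v=\Delta^{2}\bar u$, namely $-2n\Delta u(0)/r^{2}\le\bar v(r)\le -c\,|x|^{-(n-2)}$ (the upper bound via the maximum principle against the fundamental solution, the lower bound by integrating $\bar w'\le \tfrac1n r\bar v$ and using $\bar w>0$). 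The lower bound $\bar u\ge cr^{4/q}$ is then read off by sandwiching $\bar v(2r)-\bar v(r)\ge C r^{2}\bar u^{-q}(r)$ against $2n\Delta u(0)/r^{2}$; your ``three integrations contradict the bound itself'' bootstrap does not identify this mechanism, and with the sign of $\Delta^{2}u$ reversed the quadratic upper bound $\bar u\le Cr^{2}$ (which is what forces $q\ge 2$) evaporates. Likewise the $n=4$ case of (ii) is not an ``integrability threshold'' or a Pohozaev identity: it is the matching of the two decay rates $-C/r^{2}\le\bar v\le -c/r^{2}$, which forces $\bar w(r)\to-\infty$ logarithmically and contradicts $\Delta u>0$.

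Second, the doubling lemma has no role in Theorem \ref{thm1} and the route you propose for parts (i), (ii), (v) would fail. The hypotheses there are growth conditions at infinity ($u=o(|x|^{4})$ or $u=o(|x|^{6/(q+1)})$), and the doubling-lemma rescaling $u_{k}(x)=u(x_{k}+x/M(x_{k}))/u(x_{k})$ only controls the limit solution on expanding balls and produces a solution bounded below --- it destroys precisely the growth hypothesis you need to ``clash with the lower bounds/sign structure.'' The paper reserves the doubling lemma for Theorem \ref{thm0}, where no growth assumption is made; parts (i)--(v) of Theorem \ref{thm1} are proved by direct ODE arguments on the averages (Theorems \ref{thm6}--\ref{thm8}), with part (v) being exactly three successive double integrations of $\Delta^{3}\bar u\ge r^{-6q/(q+1)}$ returning $\bar u(2r)\ge Cr^{6/(q+1)}$. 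Finally, the claim $\bar u''>0$, $\bar u'''<0$ in (viii) does not come for free from the sign structure; it requires the 5D identity $\bar v-\tfrac{2}{r}\bar w'=\tfrac15(3\bar v+r\bar v')-\tfrac{1}{5r^{5}}\int_{0}^{r}t^{6}\overline{u^{-q}}\,dt$ together with the bound $0<\bar v'(r)\le C r^{-3}$ coming from $\int_{B_{r}}u^{-q}\le Cr^{n-4}$, none of which appears in your outline.
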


\begin{rem}\label{rem1}
In Theorem \ref{thm1}, we only need the assumption $u=o(|x|^{4})$ as $|x|\rightarrow+\infty$ to guarantee the sub poly-harmonic property ``$\Delta^{2}u<0$ and $\Delta u>0$ in $\mathbb{R}^{n}$" for positive entire solutions to tri-harmonic equations \eqref{PDE} (see Lemma \ref{lem5}). In Theorem 1 in \cite{N}, Ng\^{o} need the assumption
\begin{equation}\label{a0}
  \limsup_{|x|\rightarrow+\infty}\frac{u(x)}{|x|^{4}}\lee0 \quad \text{and} \quad \limsup_{|x|\rightarrow+\infty}\frac{-u(x)}{|x|^{2}}\lee0
\end{equation}
in order to derive the sub poly-harmonic property for $C^{6}$ solution $u$ to the differential inequality $(-\Delta)^{3}u<0$ in $\mathbb{R}^{n}$. For positive solution $u$ to tri-harmonic equations \eqref{PDE} with negative exponents, the condition \eqref{a0} is equivalent to our assumption $u=o(|x|^{4})$ as $|x|\rightarrow+\infty$. The upper bound estimate \eqref{7-3a} also indicates that the assumption $u=o(|x|^{4})$ at $\infty$ is necessary in the sense that, if the sub poly-harmonic property holds for solution $u$, then $u$ has no more than quadratic growth at $\infty$ in the sense of spherical average, i.e., $\bar{u}(r)\lee Cr^{2}$ for $r$ sufficiently large. In particular, if $u$ is a radially symmetric positive entire solution satisfying the sub poly-harmonic property, then $u$ has no more than quadratic growth at $\infty$. Throughout the results in Theorem \ref{thm1} and in Section 3, the assumption ``$u=o(|x|^{4})$ at $\infty$" can be replaced by the sub poly-harmonic property. One should note that positive $C^{6}$ entire solution $u$ with exactly linear growth at $\infty$ in $\mathbb{R}^{5}$, exactly quadratic growth at $\infty$ in $\mathbb{R}^{4}$ and exactly cubic growth at $\infty$ in $\mathbb{R}^{3}$ satisfies $u=o(|x|^{4})$ as $|x|\rightarrow+\infty$.
\end{rem}

\begin{rem}\label{rem2}
Suppose $u\in C^{6}(\mathbb{R}^{n})$ positive radially symmetric solution to \eqref{PDE}, then the assumption $\lim\limits_{r\rightarrow+\infty}u''(r)=\rho\in\mathbb{R}$ in (vii) in Theorem \ref{thm1} can be deduced from ``$u''(r)>0$ and $u'''(r)<0$ for any $r\in(0,+\infty)$". When $n=5$, for general positive entire solution $u$, the property ``$\bar{u}''(r)>0$ and $\bar{u}'''(r)<0$ for any $r\in(0,+\infty)$" can be deduced from the sub poly-harmonic property or the assumption ``$u=o(|x|^{4})$ at $\infty$" (see Lemma \ref{lem10} in Section 3).
\end{rem}

\begin{rem}\label{rem9}
From $(i)$ and $(ii)$ Theorem \ref{thm1}, we can see that if \eqref{PDE} possesses a $C^{6}$ positive entire solution $u$ satisfying $u=o(|x|^{4})$ at $\infty$ or the sub poly-harmonic property, then $q\gee 2$ and $n\gee 5$. Now we assume $u\in C^{6}(\mathbb{R}^{n})$ is a radially symmetric positive entire solution to \eqref{PDE}. Suppose $q\gee 2$, $n\gee 5$ and $u=o(r^{4})$ at $\infty$, by $(iii)$, $(iv)$ and $(v)$ in Theorem \ref{thm1}, we have, $u$ must has exactly quadratic growth at $\infty$ if $q=2$, $u$ must has more than linear but at most quadratic growth at $\infty$ if $2<q<5$, $u$ must has at least linear but at most quadratic growth at $\infty$ if $n=5$ and $q\gee 5$. Suppose $q>0$ and $n\gee 2$, $(v)$ in Theorem \ref{thm1} implies that, $u$ has more than quintic growth at $\infty$ if $q<\frac{1}{5}$, $u$ has more than quartic growth at $\infty$ if $q<\frac{1}{2}$, $u$ has more than linear growth at $\infty$ if $q<5$. Suppose $q>5$, $n=5$ and $u=o(r^{4})$ at $\infty$, from $(viii)$ in Theorem \ref{thm1}, we derive that either $u$ has exactly quadratic growth at $\infty$ or $u$ has exactly linear growth at $\infty$. Suppose $n\gee 3$ but $n\neq4, 6$, $q>\frac{1}{2}$ and $u$ has at least quartic growth at $\infty$, $(x)$ in Theorem \ref{thm1} yields that $u$ must has exactly quartic growth at $\infty$.
\end{rem}

\subsection{2D bi-harmonic equation}

Next, in equation \eqref{PDE}, we will also consider the bi-harmonic analogue of the tri-harmonic case in $\mathbb{R}^{2}$, i.e., $m=2$ and $n=2$.

\smallskip

Let us briefly review the background of the bi-harmonic equations \eqref{PDE} arising from conformal geometry. Suppose $(M,g)$ is a smooth $4$-dimensional Riemannian manifold, Paneitz \cite{P} extended the Laplace-Beltrami operator $\Delta_{g}$ (which is conformally covariant on $2$-dimensional Riemannian manifold) to the fourth order operator $P^{4}_{g}$ on $(M,g)$ which is defined by
\begin{equation}\label{P1}
  P^{4}_{g}:=\Delta_{g}^{2}-\delta\left[\left(\frac{2}{3}R_{g}g-2Ric_{g}\right)d\right],
\end{equation}
where $\delta$ is the divergent operator, $R_{g}$ is the scalar curvature of $g$ and $Ric_{g}$ si the Ricci curvature of $g$. The Paneitz operator $P^{4}_{g}$ also has the conformally covariant property, that is, if $\tilde{g}:=e^{2u}g$, then $P^{4}_{\tilde{g}}=e^{-4u}P^{4}_{g}$. A generalization of the Paneitz operator $P^{4}_{g}$ to manifolds of other dimensions $n\gee3$, due to Branson \cite{Branson}, is given by
\begin{equation}\label{P2}
  P^{n}_{g}:=\Delta_{g}^{2}-\delta\left[\left(\frac{(n-2)^{2}+4}{2(n-1)(n-2)}R_{g}g-\frac{4}{n-2}Ric_{g}\right)d\right]+\frac{n-4}{2}Q^{n}_{g},
\end{equation}
where
\begin{equation}\label{Q1}
Q^{n}_{g}=-\frac{1}{2(n-1)}\Delta_{g}R_{g}+\frac{n^{3}-4n^{2}+16n-16}{8(n-1)^{2}(n-2)^{2}}R_{g}^{2}-\frac{2}{(n-2)^{2}}|Ric_{g}|^{2}.
\end{equation}
One should note that $P^{n}_{g}$ reduces to $P^{4}_{g}$ when $n=4$.

\smallskip

Similar to the conformal Laplace-Beltrami operator $L^{n}_{g}:=-\Delta_{g}+\frac{n-2}{4(n-1)}R_{g}$ for $n\gee2$, the operator $P^{n}_{g}$ has conformal property: if $n\neq4$ and $\tilde{g}:=u^{\frac{4}{n-4}}g$ is a conformal metric of $g$, then $P^{n}_{g}(\phi u)=P^{n}_{\tilde{g}}(\phi)u^{\frac{n+4}{n-4}}$ for any $\phi\in C^{\infty}(M)$. In particular, if $\phi=1$, then $P^{n}_{g}(\phi u)=\frac{n-4}{2}Q^{n}_{\tilde{g}}(\phi)u^{\frac{n+4}{n-4}}$.

\smallskip

As a consequence, in the Euclidean case, suppose that $u>0$ and $g_{0}$ is a standard flat metric on $\mathbb{R}^{n}$, and let $g:=u^{\frac{4}{n-4}}g_{0}$ be a conformal metric of $g_{0}$, then $u$ satisfies
\begin{equation}\label{GPDE}
\Delta^{2}u=\frac{n-4}{2}Q^{n}_{g}u^{\frac{n+4}{n-4}} \qquad \text{in} \,\, \mathbb{R}^{n},
\end{equation}
where $n\lee3$ or $n\gee5$. If $n\gee5$, the conformal metric $g$ satisfies $Q^{n}_{g}=cu^{p-\frac{n+4}{n-4}}$ ($c>0$, $p>0$, $u>0$ is the conformal factor), then the classification of positive solutions to \eqref{GPDE} has been established in Lin \cite{Lin} (see also Wei and Xu \cite{WX}). If $n\lee3$, in order to find conformal metric $g$ on $\mathbb{R}^{n}$ satisfying $Q^{n}_{g}=cu^{-q-\frac{n+4}{n-4}}$ ($c>0$, $q>0$, $u>0$ is the conformal factor), we need to consider the following equation with negative exponents (i.e., equation \eqref{PDE} with $m=2$):
\begin{equation}\label{DE}
\Delta^{2}u+u^{-q}=0, \quad u>0 \qquad \text{in} \,\, \mathbb{R}^{n}.
\end{equation}

\smallskip

When $n=3$, the existence and asymptotic behavior of positive solutions to 3D equation \eqref{DE} have been studied by Choi and Xu \cite{CX}, Duoc and Ng\^{o} \cite{DN}, Guerra \cite{Gue}, Hyder and Wei \cite{HW}, Lai \cite{Lai1,Lai2}, McKenna and Reichel \cite{MR} and Xu \cite{Xu}.
\begin{thm}[\cite{CX,DN,Gue,HW,Lai1,Lai2,MR,Xu}]\label{known results}
Assume that $n=3$ in equation \eqref{DE}. Then \\
(i)\, There is no $C^{4}$ positive entire solution to \eqref{DE} if $0<q\lee1$. \\
(ii)\, If $u\in C^{4}(\mathbb{R}^{3})$ has at most linear growth (uniformly) at $\infty$, that is,
\begin{equation}\label{e0}
  \lim _{|x| \rightarrow+\infty}\frac{u(x)}{|x|}=\alpha\in[0,+\infty),
\end{equation}
then $\alpha>0$ and $q>3$. Moreover, if $q=7$, then, up to dilations and translations, $u$ must assume the unique form: $u(x)=\sqrt{\frac{1}{\sqrt{15}}+|x|^{2}}$. \\
(iii)\, If $q>3$, then for any $\alpha>0$, there exists a unique radially symmetric solution $u\in C^{4}(\mathbb{R}^{3})$ to \eqref{DE} (with exactly linear growth) such that
\begin{equation}\label{e2}
  \lim_{|x|\rightarrow+\infty}\frac{u(x)}{|x|}=\alpha>0.
\end{equation}
(iv)\, If $q>1$, there exists a radially symmetric solution $u\in C^{4}(\mathbb{R}^{3})$ to \eqref{DE} with exactly quadratic growth, that is,
\begin{equation}\label{e1}
  \lim _{|x| \rightarrow+\infty}\frac{u(x)}{|x|^{2}}=C>0.
\end{equation}
(v)\, If $q>3$, then any radially symmetric solution to \eqref{DE} is either exactly linear growth or exactly quadratic growth. \\
(vi)\, If $1<q<3$, there exists a unique radially symmetric solution $u\in C^{4}(\mathbb{R}^{3})$ to \eqref{DE} such that
\begin{equation}\label{e3}
  \lim_{|x|\rightarrow+\infty} |x|^{-\frac{4}{q+1}}u(x)=K_{q}^{-\frac{1}{q+1}},
\end{equation}
where $K_{q}=\tau(2-\tau)(\tau+1)(\tau-1)$ and $\tau=\frac{4}{q+1}$. \\
(vii)\, If $q=3$, then there exists a unique radially symmetric solution $u\in C^{4}(\mathbb{R}^{3})$ to \eqref{DE} such that
\begin{equation}\label{e4}
  \lim_{|x|\rightarrow+\infty}\frac{u(x)}{|x|\log(|x|)^{\frac{1}{4}}}=2^{\frac{1}{4}}.
\end{equation}
(viii)\, If $q>4$ and $u$ has exactly linear growth at $\infty$, then $u$ has the following integral representation:
\begin{equation}\label{e5}
  u(x)=\frac{1}{8\pi}\int_{\mathbb{R}^{3}}|x-y|u^{-q}(y)dy+\gamma,
\end{equation}
where $\gamma$ is a constant. Moreover, $\gamma<0$ if $4<q<7$, $\gamma=0$ if $q=7$ and $\gamma>0$ if $q>7$.
\end{thm}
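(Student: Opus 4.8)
The plan is to treat Theorem~\ref{known results} as a single package resting on one structural lemma together with a careful analysis of the radial ODE. First I would establish the \emph{sub-harmonic property}: every positive $u\in C^{4}(\mathbb{R}^{3})$ solving \eqref{DE} satisfies $\Delta u>0$ in $\mathbb{R}^{3}$. Put $v:=\Delta u$; then $\Delta v=-u^{-q}<0$, so $v$ is superharmonic, and for its spherical average one has $\bigl(r^{2}\bar v'(r)\bigr)'=-r^{2}\,\overline{u^{-q}}(r)<0$, hence $\bar v$ is strictly decreasing; if $\bar v(r)\to\ell<0$, integrating $\bigl(r^{2}\bar u'(r)\bigr)'=r^{2}\bar v(r)$ twice forces $\bar u(r)\to-\infty$, contradicting $u>0$. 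Running the same argument with averages centered at an arbitrary point gives $v\gee0$ everywhere; since a nonnegative superharmonic function attaining the value $0$ is constant, which is incompatible with $\Delta v=-u^{-q}<0$, in fact $v>0$ pointwise. Consequences to record: $r\mapsto\bar u(r)$ is nondecreasing with $\bar u(r)\gee u(0)>0$; $\overline{\Delta u}(r)$ is nonincreasing with a limit $\ell\in[0,+\infty)$; comparing with $\overline{\Delta u}(1)$ yields the a priori bound $\bar u(r)\lee Cr^{2}$ for $r$ large; and for radially symmetric $u$ the bars may be dropped throughout.

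With this in hand, parts (i) and (ii) follow by quantitative integration of the two identities $r^{2}\bigl(\overline{\Delta u}\bigr)'(r)=-\int_{0}^{r}s^{2}\,\overline{u^{-q}}(s)\,ds$ and $r^{2}\,\bar u'(r)=\int_{0}^{r}s^{2}\,\overline{\Delta u}(s)\,ds$, combined with Jensen's inequality $\overline{u^{-q}}\gee(\bar u)^{-q}$. If $0<q\lee1$, the quadratic bound gives $\overline{u^{-q}}(r)\gtrsim r^{-2q}$, hence $\int_{0}^{r}s^{2}\overline{u^{-q}}\gtrsim r^{3-2q}$ and $\bigl(\overline{\Delta u}\bigr)'(r)\lesssim -r^{1-2q}$; since $2q\lee2$, integrating drives $\overline{\Delta u}(r)\to-\infty$, contradicting $\Delta u>0$, which is (i). For (ii) the alternative $\ell>0$ is excluded because the second identity then forces exactly quadratic growth $\bar u(r)\sim\frac{\ell}{6}r^{2}$, contradicting at most linear growth; so $\ell=0$, and requiring $\bar u'(r)\to\alpha$ then forces $\overline{\Delta u}(r)\sim\frac{2\alpha}{r}$ and $\int_{0}^{\infty}s^{2}\,\overline{u^{-q}}(s)\,ds<+\infty$, which with $\overline{u^{-q}}\gtrsim r^{-q}$ (linear growth plus Jensen) yields convergence of $\int^{\infty}s^{2-q}\,ds$, i.e. $q>3$. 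Finally, sublinear growth $\alpha=0$ is ruled out because the same estimates force a growth rate $\gtrsim r^{4/(q+1)}$, which already contradicts at most linear growth when $q<3$, while for $q\gee3$ a refinement (using that the self-similar profile $r^{4/(q+1)}$ ceases to define a positive function once $q\gee3$) forces genuine linear growth; hence $\alpha>0$.

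For the existence, uniqueness and dichotomy statements (iii)--(vii) I would use the shooting/fixed-point method in the two parameters $u(0)=a>0$ and $\Delta u(0)=b$, reduced to one parameter by the scaling invariance $u\mapsto\lambda^{-4/(q+1)}u(\lambda\,\cdot)$ of \eqref{DE}: the branch with $\ell>0$ yields, for every prescribed rate, the quadratic solutions of (iv), available precisely for $q>1$ (exactly the condition for $\int_{1}^{\infty}(-(\overline{\Delta u})')\,dr<+\infty$), while the branch with $\ell=0$ yields — according to $q$ — linear growth for $q>3$ (part (iii), with the growth constant $\alpha$ realizing the last parameter), the borderline rate $r(\log r)^{1/4}$ for $q=3$ (part (vii)), and the self-similar rate $r^{4/(q+1)}$ for $1<q<3$ (part (vi), the constant $K_{q}=\tau(2-\tau)(\tau+1)(\tau-1)$ with $\tau=\frac{4}{q+1}$ being read off by inserting the ansatz $u\approx K_{q}^{-1/(q+1)}r^{4/(q+1)}$ into the radial operator); uniqueness at each prescribed growth follows from monotone dependence of the solution on the shooting datum together with ODE comparison, and (v) records that these exhaust the possibilities. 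For the integral representation (viii), the fundamental solution of $\Delta^{2}$ in $\mathbb{R}^{3}$ is $-\frac{1}{8\pi}|x|$, so Green's representation on $B_{R}$, combined with the decay $u^{-q}\sim|y|^{-q}$ (integrable against $|x-y|$ exactly when $q>4$, which is where that hypothesis enters) and the structural bounds on $u,\nabla u,\Delta u$ at $\infty$, lets one pass to the limit and write $u(x)=\frac{1}{8\pi}\int_{\mathbb{R}^{3}}|x-y|u^{-q}(y)\,dy+P(x)$ with $P$ biharmonic of at most linear growth, hence affine; then $\Delta u>0$ and the radial structure force $\nabla P\equiv0$, so $P\equiv\gamma$. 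The sign of $\gamma$ comes from matching both sides as $|x|\to\infty$: since $\frac{1}{8\pi}\int|x-y|u^{-q}\,dy=\bigl(\frac{1}{8\pi}\int u^{-q}\bigr)|x|+o(|x|)$ must match $u(x)=\alpha|x|+o(|x|)$, and a Pohozaev identity (multiplying $\Delta^{2}u=-u^{-q}$ by $x\cdot\nabla u$ over $B_{R}$, whose boundary terms are controlled by the linear growth) pins $\frac{1}{8\pi}\int u^{-q}$ against $\alpha$ and isolates the critical exponent $q=7$, one gets $\gamma<0$, $\gamma=0$, $\gamma>0$ according as $q<7$, $q=7$, $q>7$; at $q=7$ this forces $\gamma=0$, and the resulting integral equation $u=\frac{1}{8\pi}\int|x-y|u^{-q}(y)\,dy$ is then classified by the method of moving spheres, giving $u(x)=\sqrt{1/\sqrt{15}+|x|^{2}}$ in (ii) (which one checks directly: for $u=(c+|x|^{2})^{1/2}$ in $\mathbb{R}^{3}$ one computes $\Delta^{2}u=-15c^{2}(c+|x|^{2})^{-7/2}$, equal to $-u^{-7}$ exactly when $c^{2}=\frac{1}{15}$).

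The step I expect to be the genuine obstacle is the fine asymptotic analysis in the subquadratic regime $\ell=0$: proving that the shooting solutions have \emph{precisely} the rates $r^{4/(q+1)}$, $r(\log r)^{1/4}$ or linear, with the exact constants $K_{q}^{-1/(q+1)}$, $2^{1/4}$, $\alpha$, and that they are unique for each prescribed rate. This requires an Emden--Fowler type change of variables, sharp two-sided barriers, and continuity/monotonicity of the flow in the shooting parameter, and it is where most of the work in \cite{CX,DN,Gue,HW,Lai1,Lai2,MR,Xu} lives; a close second is the rigidity at $q=7$, which needs either a Pohozaev argument with a fully controlled equality case or a moving-sphere argument on the integral equation.
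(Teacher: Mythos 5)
First, a point of order: the paper does not prove Theorem~\ref{known results} at all --- it is stated as a survey of known results, explicitly attributed to \cite{CX,DN,Gue,HW,Lai1,Lai2,MR,Xu}, and serves only as background for Theorems~\ref{thm1} and~\ref{thm0}. So there is no in-paper proof to compare your proposal against. Judged on its own terms, your outline correctly identifies the standard machinery, and in fact mirrors the techniques this paper uses in Sections 2--3 for its own results: the sub-harmonic property via spherical averages, Jensen's inequality, maximum-principle barriers against fundamental solutions, and the Liouville-plus-Pohozaev route to the integral representation. Your arguments for (i), for the necessity of $q>3$ in (ii), the skeleton of (viii), and the explicit verification that $\Delta^{2}(c+|x|^{2})^{1/2}=-15c^{2}(c+|x|^{2})^{-7/2}$ in $\mathbb{R}^{3}$ (so $c=1/\sqrt{15}$) are all sound.

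That said, the proposal is not a proof of the full statement, and the gaps sit exactly where you flag them. For (iii)--(vii) --- existence, uniqueness, the linear/quadratic dichotomy, and the sharp rates $K_{q}^{-1/(q+1)}r^{4/(q+1)}$ and $2^{1/4}r(\log r)^{1/4}$ --- ``shooting plus monotone dependence plus comparison'' is a program, not an argument; note also that the theorem asserts uniqueness in (iii), (vi), (vii) but only existence in (iv), and nothing in the sketch distinguishes these cases or produces the constants. Within (ii) there are two smaller but genuine soft spots. (a) You assert $\int_{0}^{\infty}s^{2}\,\overline{u^{-q}}(s)\,ds<+\infty$ as a consequence of $\overline{\Delta u}(r)\to 0$; this does not follow from that alone (the tail of $\overline{\Delta u}$ only controls $\int^{\infty}t\,\overline{u^{-q}}\,dt$, which gives $q>2$). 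One must additionally use $\bar u'(r)=r^{-2}\int_{0}^{r}s^{2}\overline{\Delta u}(s)\,ds\to\alpha<+\infty$ together with the lower bound $\overline{\Delta u}(s)\gee s^{-1}\int_{0}^{s}t^{2}\overline{u^{-q}}(t)\,dt$ to force $\int_{0}^{\infty}t^{2}\overline{u^{-q}}\,dt<\infty$ and hence $q>3$. (b) The conclusion $\alpha>0$ needs an actual barrier, e.g.\ $\Delta u(x)\gee c|x|^{-1}$ for $|x|\gee 1$ by the minimum principle against the fundamental solution (the $n=3$ analogue of Lemma~\ref{lem7}), fed into $\bar u(r)\gee u(0)+\tfrac16 r^{2}\overline{\Delta u}(r)$; the observation that $K_{q}\lee 0$ for $q\gee 3$ is a heuristic for non-achievability of the self-similar rate, not a proof, and the borderline case $q=3$ (excluded via the $r(\log r)^{1/4}$ asymptotics of (vii)) is not addressed.
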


\smallskip

One should note that the 3D bi-harmonic equation \eqref{PDE} is essentially similar to the 5D tri-harmonic case. In fact, when $m=2$, it was already proved by Xu in \cite{Xu} that, suppose $u\in C^{4}(\mathbb{R}^{3})$ is a positive solution to
\begin{equation}\label{e7}
  u(x)=\frac{1}{8\pi}\int_{\mathbb{R}^{3}}|x-y|u^{-q}(y)dy,
\end{equation}
then $q=7$, and $u$ must assume the unique form $u(x)=c(1+|x|^{2})^{\frac{1}{2}}$ up to dilations and translations. Thus we may expect the absence of positive entire solution to the bi-harmonic equation \eqref{PDE} in two dimension case $n=2$.

\medskip

In this paper, we will also investigate the bi-harmonic equation \eqref{PDE} in $\mathbb{R}^{2}$. Being essentially different from the abundant existence results of various positive entire solutions with different asymptotic behaviors at $\infty$ in the 3D case (see Theorem \ref{known results}), we can prove the nonexistence of positive entire solutions to the planar bi-harmonic equation \eqref{PDE} with negative exponents, that is, $n=2$ and $m=2$ in \eqref{PDE}.

\begin{thm}\label{thm0}
Assume $n=2$ and $m=2$. Then, for any $q>0$, there is no positive entire solution $u\in C^{4}(\mathbb{R}^{2})$ to \eqref{PDE}.
\end{thm}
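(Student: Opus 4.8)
The plan is to pass to the spherical average of $u$ and then exploit the fact that, in dimension two, the radial theory of $\Delta^{2}$ is governed by the \emph{divergent} integral $\int^{\infty}\frac{dr}{r}$; this is precisely the feature that fails when $n\geq 3$, which is why positive entire solutions are abundant in the $3$D biharmonic (and $5$D triharmonic) settings recalled above but absent here. Let $\bar{u}(r)$ denote the average of $u$ over $\partial B_{r}(0)$. Since $u\in C^{4}(\mathbb{R}^{2})$ and $u>0$, the function $\bar{u}$ is positive and smooth in $r$, and because taking spherical means commutes with the Laplacian we get $\Delta^{2}\bar{u}(r)=\overline{\Delta^{2}u}(r)=-\,\overline{u^{-q}}(r)=:-f(r)$ with $f>0$ on $[0,\infty)$. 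As $u$ is bounded above on $\overline{B_{1}(0)}$, there is a constant $c_{0}>0$ with $f\geq c_{0}$ on $[0,1]$.

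First I would analyse $w:=\Delta\bar{u}$, a radial $C^{2}$ function satisfying $\Delta w=\Delta^{2}\bar{u}=-f<0$. In radial variables this reads $(r\,w'(r))'=r\,\Delta w(r)=-r f(r)$; integrating from $0$ (with $r\,w'(r)\to 0$ as $r\to 0$, since $w$ is $C^{1}$ at the origin) gives
\[
 w'(r)=-\frac{1}{r}\int_{0}^{r}s f(s)\,ds=:-\frac{F(r)}{r}<0\qquad(r>0),
\]
so $w$ is strictly decreasing on $(0,\infty)$. Integrating once more yields $w(r)=w(0)-\int_{0}^{r}\frac{F(t)}{t}\,dt$. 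Since $F$ is nondecreasing and $F(1)=\int_{0}^{1}s f(s)\,ds\geq \tfrac{1}{2}c_{0}>0$, the integral $\int_{1}^{\infty}\frac{F(t)}{t}\,dt$ diverges, and therefore $w(r)\to-\infty$ as $r\to+\infty$.

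Finally, choose $r_{0}>0$ with $w(r_{0})=:-\delta<0$; by the monotonicity of $w$, $\Delta\bar{u}(r)=w(r)\leq-\delta$ for all $r\geq r_{0}$, i.e. $(r\,\bar{u}'(r))'\leq-\delta r$ on $[r_{0},\infty)$. Integrating once and dividing by $r$ gives $\bar{u}'(r)\leq-\tfrac{\delta}{4}r$ for all sufficiently large $r$, and a further integration forces $\bar{u}(r)\to-\infty$, contradicting $\bar{u}>0$. Hence \eqref{PDE} has no positive entire $C^{4}$ solution in $\mathbb{R}^{2}$.

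The argument is short and essentially unconditional: no growth hypothesis is needed, and the exponent $q>0$ enters only through making $u^{-q}$ a strictly positive continuous function, so the same proof would handle $\Delta^{2}u=-g(x,u)$ with $g>0$. Consequently I do not expect a genuine obstacle; the only points requiring (routine) care are the identity $\overline{\Delta^{2}u}=\Delta^{2}\bar{u}$ and the behaviour of the radial ODE at $r=0$, both standard for $C^{4}$ functions. If a more structural formulation is preferred, the middle step shows that the sub poly-harmonic property $\Delta\bar{u}>0$ on $(0,\infty)$ cannot persist, which again yields the contradiction; but the direct computation above is the cleanest route.
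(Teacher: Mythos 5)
Your proof is correct, and it takes a genuinely different route from the paper's. The crux is sound: writing $\bar w(r)$ for the spherical mean of $\Delta u$, the divergence theorem gives $r\,\bar w'(r)=-\frac{1}{2\pi}\int_{B_{r}(0)}u^{-q}\,dx\le -\frac{1}{2\pi}\int_{B_{1}(0)}u^{-q}\,dx<0$ for $r\ge 1$, so $\bar w(r)\le \bar w(1)-c\ln r\to-\infty$, and two further integrations of $(r\bar u')'=r\bar w$ force $\bar u(r)\to-\infty$, contradicting $u>0$; the behaviour at $r=0$ and the identity $\overline{\Delta^{2}u}=\Delta^{2}\bar u$ are the routine points you flag and they do hold for $C^{4}$ functions. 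This is, in spirit, the parabolicity-of-$\mathbb{R}^{2}$ argument that the paper attributes (Remark \ref{rem10}) to Ng\^{o}--Nguyen--Phan--Ye and Farina, but your version is leaner than even that one: by pushing the logarithmic divergence all the way down to $\bar u$ itself, you avoid both the sub poly-harmonic property (Lemma \ref{lem1}, which rests on Ng\^{o}'s Theorem 1) and Farina's Liouville theorem for superharmonic functions bounded from below. The paper deliberately takes a completely different path --- spherical averages plus Jensen, the 2D comparison theorem (Theorem \ref{thm2}) to rule out $q\le 1$, the integral representation of $\Delta u$ (Lemma \ref{lem4}), nonexistence for solutions with a positive lower bound (Theorem \ref{thm4}), the almost-quadratic lower bound (Lemma \ref{lem3}), and finally the doubling lemma to pass to general solutions. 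What your approach buys is brevity and robustness (as you note, it works verbatim for $\Delta^{2}u=-g(x,u)$ with $g>0$, with no growth hypothesis and no restriction on $q$); what the paper's approach buys is a collection of intermediate results (the comparison theorem, the representation formula, the sharp upper and lower growth bounds) that are of independent interest and a template applicable in dimensions where the parabolicity argument is unavailable.
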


\begin{rem}\label{rem0}
Theorem 3.1 in Mckenna and Reichel \cite{MR} implies that, for $n\gee4$ the equation $\Delta^{2}u=-u^{-q}$ in $\mathbb{R}^{n}$ has non-radial positive entire solutions given by $u(x',x_{n})=v(|x'|)$, where $v(r)$ is a radial positive entire solution satisfying $\Delta^{2}v=-v^{-q}$ in $\mathbb{R}^{n-1}$. This leaves the question whether in $\mathbb{R}^{3}$ non-radial positive entire solution can be constructed in such way or not. Theorem \ref{thm0} gives a negative answer to the open question (2) raised by Mckenna and Reichel in Section 6 of \cite{MR} and hence non-radial positive entire solution in $\mathbb{R}^{3}$ can not be constructed from radial positive entire solution in $\mathbb{R}^{2}$.
\end{rem}

\begin{rem}\label{rem10}
After this work has been completed and submitted, we were aware that the nonexistence results in Theorem \ref{thm0} for the 2D bi-harmonic equation \eqref{PDE} has already been proved by Ng\^{o}, Nguyen, Phan and Ye in Proposition 4.1 of \cite{NNPY}. Their proof makes use of the results on super/sub poly-harmonic properties in Lemma 3.3 of \cite{NNPY} and the Liouville type theorem in $\mathbb{R}^{2}$ for super-harmonic functions (bounded from below) in Theorem 3.1 of Farina \cite{Farina}. Theorem 3.1 in \cite{Farina} is proved by using the Hadamard three-circles theorem (Theorem 3.2 in \cite{Farina}). One should notice that Theorem 3.1 in \cite{Farina} only holds in $\mathbb{R}^{2}$, hence $\mathbb{R}^{2}$ (endowed with the standard flat metric) is a parabolic Riemannian manifold. Euclidean space $\mathbb{R}^{n}$ with $n\gee 3$ (endowed with the standard flat metric) is not a parabolic Riemannian manifold. Indeed, for any $n\gee 3$, the non-constant positive function $u(x):=\Big(\frac{\sqrt{n(n-2)}}{1+|x|^{2}}\Big)^{\frac{n-2}{2}}$ solves the Yamabe equation $-\Delta u=u^{\frac{n+2}{n-2}}$ in $\mathbb{R}^{n}$. Thus the Liouville type theorem for super-harmonic functions (bounded from below) in Theorem 3.1 of \cite{Farina} does not hold in $\mathbb{R}^{n}$ with $n\gee 3$. In Section 3, we reprove the nonexistence results in Proposition 4.1 of \cite{NNPY} for the 2D bi-harmonic equation \eqref{PDE} and gives another completely different approach to Theorem \ref{thm0} which is interesting and could be instructive for studying other related problems. Our proof first makes use of the spherical averages (see Lemma \ref{lem2}), the 2D comparison theorem (see Theorem \ref{thm2}), precise asymptotic estimates and integral representation formula (see Theorems \ref{thm3} and \ref{thm4}, Lemmas \ref{lem4} and \ref{lem3}) to prove the nonexistence of positive entire solutions $u$ with $u^{-1}$ bounded from above or radial symmetry, then apply the doubling lemma (see Lemma \ref{doubling}) to derive the nonexistence of general positive entire solutions.
\end{rem}

The rest of our paper are arranged as follows. Section 2 is devoted to the proof of various properties (including nonexistence, asymptotic behavior, uniqueness and integral representation formula) of positive solutions to the tri-harmonic equation \eqref{PDE}, i.e.,  Theorem \ref{thm1}. In Section 3, we will prove the nonexistence of positive solutions to the 2D bi-harmonic equation \eqref{PDE}, i.e.,  Theorem \ref{thm0}.

\smallskip

In what follows, we will use $C$ to denote a general positive constant that may depend on $n$ and $q$, and whose value may differ from line to line.

\section{Tri-harmonic equations in $\mathbb{R}^{n}$ with $n\gee 2$}

In this section, we will prove various properties (including nonexistence, asymptotic behavior, uniqueness and integral representation formula) of positive solutions to the tri-harmonic equation \eqref{PDE}, i.e.,  Theorem \ref{thm1}.

\begin{lem}\label{lem0}
Assume $n\gee1$ and $q>0$. For any point $x_{0}$ $\in \mathbb{R}^{n}$ and all $r>0$,
\begin{equation}\label{2}
\Bigg[\ \ \ -\kern-22.5pt\int\limits_{\p B_{r}(x_{0})} u\,d\sigma \Bigg]^{-q}\ \lee \ \ \ -\kern-22.5pt\int\limits_{\p B_{r}(x_{0})} u^{-q}\,d\sigma,
\end{equation}
where the symbol $-\kern-10pt\int\limits_{S} f\,d\sigma$ denotes the spherical average of function $f$ over the sphere $S$.
\end{lem}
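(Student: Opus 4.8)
The plan is to recognize \eqref{2} as a direct instance of Jensen's inequality for the convex function $t\mapsto t^{-q}$ on $(0,+\infty)$, taken with respect to the normalized surface measure on $\p B_{r}(x_{0})$.

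First I would fix $x_{0}\in\mathbb{R}^{n}$ and $r>0$. Since $u>0$ is continuous, its restriction to the compact sphere $\p B_{r}(x_{0})$ takes values in some interval $[a_{0},b_{0}]\subset(0,+\infty)$; hence both the spherical average $\bar{u}:=-\kern-10pt\int_{\p B_{r}(x_{0})}u\,d\sigma$ and the spherical average $-\kern-10pt\int_{\p B_{r}(x_{0})}u^{-q}\,d\sigma$ are well-defined, finite and positive, so that the left-hand side $\bar{u}^{-q}$ of \eqref{2} is meaningful.

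Next I would introduce the Borel probability measure $\mu$ on $\p B_{r}(x_{0})$ obtained by normalizing the surface measure $\sigma$, so that for every continuous $f$ one has $-\kern-10pt\int_{\p B_{r}(x_{0})}f\,d\sigma=\int f\,d\mu$. The function $\phi(t):=t^{-q}$ satisfies $\phi''(t)=q(q+1)t^{-q-2}>0$ on $(0,+\infty)$ for every $q>0$, hence is convex there. Jensen's inequality applied to $\phi$ and the bounded positive function $u$ then gives $\phi\big(\int u\,d\mu\big)\lee\int\phi(u)\,d\mu$, which, after unravelling the notation, is precisely \eqref{2}.

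There is essentially no obstacle here: the content reduces to the convexity of $t\mapsto t^{-q}$ for $q>0$ together with the (trivial) positivity and finiteness of the two averages, guaranteed by $u\in C^{2m}(\mathbb{R}^{n})$, $u>0$ and the compactness of $\p B_{r}(x_{0})$. Should one prefer to avoid invoking Jensen's inequality directly, the same conclusion follows from the supporting-line property of convex functions: with $\bar{u}>0$ as above one has the pointwise bound $\phi(t)\gee\phi(\bar{u})+\phi'(\bar{u})(t-\bar{u})$ for all $t>0$; evaluating this at $t=u(y)$, integrating in $y$ against $\mu$, and using $\int(u-\bar{u})\,d\mu=0$ yields \eqref{2}.
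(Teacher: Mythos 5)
Your proposal is correct and follows exactly the paper's argument: the paper also deduces \eqref{2} directly from Jensen's inequality applied to the convex function $t\mapsto t^{-q}$ on $(0,+\infty)$ with the normalized surface measure. Your version merely spells out the routine verifications (positivity of the averages, convexity via the second derivative, the supporting-line alternative) that the paper leaves implicit.
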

\begin{proof}
Due to the convexity of the function $f(u)=u^{-q}$ on the interval $(0,+\infty)$, Lemma \ref{lem0} follows immediately from Jensen's inequality.
\end{proof}

From Theorem 1 in \cite{N}, we can derive the following sub poly-harmonic property for $C^{6}$ positive solution $u$ to the tri-harmonic equation \eqref{PDE}.
\begin{lem}\label{lem5}
Assume $m=3$, $n\gee 2$ and $q>0$. If $u\in C^{6}(\mathbb{R}^{n})$ is a positive entire solution in $\mathbb{R}^{n}$ to \eqref{PDE} satisfying $u(x)=o({|x|}^{4})$ when $|x|\rt+\infty$, then $\Delta u>0$ and ${\Delta}^{2} u<0$ in $\mathbb{R}^{n}$.
\end{lem}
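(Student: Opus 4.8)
The plan is to reduce to a radial ODE via spherical averages. Fix an arbitrary point $x_{0}\in\mathbb{R}^{n}$ and write $\bar{u}(r)$ for the spherical average of $u$ over $\p B_{r}(x_{0})$. Since $u\in C^{6}$, the profile $r\mapsto\bar{u}(r)$ is $C^{6}$ on $[0,+\infty)$, extends to an even function of $r$ (so its odd-order derivatives vanish at $r=0$), and spherical averaging commutes with the Laplacian, $\overline{\Delta^{j}u}(r)=\mathcal{L}^{j}\bar{u}(r)$, where $\mathcal{L}:=\tfrac{d^{2}}{dr^{2}}+\tfrac{n-1}{r}\tfrac{d}{dr}$. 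Rewriting \eqref{PDE} with $m=3$ as $\Delta^{3}u=u^{-q}$ and averaging over $\p B_{r}(x_{0})$ gives
\[
\mathcal{L}^{3}\bar{u}(r)=\overline{u^{-q}}(r)>0\qquad\text{for all }r>0,
\]
since $u^{-q}>0$ pointwise; also $u>0$ gives $\bar{u}>0$, and $u=o(|x|^{4})$ gives $\bar{u}(r)=o(r^{4})$ as $r\rightarrow+\infty$. As $x_{0}$ is arbitrary, it suffices to prove $\Delta^{2}\bar{u}(r)<0$ and $\Delta\bar{u}(r)>0$ for all $r>0$ and then let $r\to0^{+}$.

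First I would establish $\Delta^{2}\bar{u}<0$ on $(0,+\infty)$. Put $f:=\mathcal{L}^{2}\bar{u}$. From $\big(r^{n-1}f'(r)\big)'=r^{n-1}\mathcal{L}^{3}\bar{u}(r)>0$ together with $r^{n-1}f'(r)\to0$ as $r\to0^{+}$, we conclude $f'>0$, so $f$ is strictly increasing. If $f(r_{0})\geq0$ for some $r_{0}>0$, choose $r_{1}>r_{0}$ with $c:=f(r_{1})>0$; then $f\geq c$ on $[r_{1},+\infty)$, and integrating $\big(r^{n-1}(\mathcal{L}\bar{u})'\big)'=r^{n-1}f$ and then $\big(r^{n-1}\bar{u}'\big)'=r^{n-1}\mathcal{L}\bar{u}$ (the boundary contributions being of lower order) yields $\mathcal{L}\bar{u}(r)\geq c_{1}r^{2}$ and $\bar{u}(r)\geq c_{2}r^{4}$ for large $r$, with $c_{1},c_{2}>0$, contradicting $\bar{u}(r)=o(r^{4})$. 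Hence $\Delta^{2}\bar{u}<0$ on $(0,+\infty)$; letting $r\to0^{+}$ gives $\Delta^{2}u(x_{0})\leq0$, so $\Delta^{2}u\leq0$ in $\mathbb{R}^{n}$. To make this strict, note $w:=\Delta^{2}u$ satisfies $\Delta w=\Delta^{3}u=u^{-q}>0$, so $w$ is strictly subharmonic; if $w(x_{0})=0$ somewhere, the strong maximum principle forces $w\equiv\text{const}$, contradicting $\Delta w>0$. Thus $\Delta^{2}u<0$ in $\mathbb{R}^{n}$, and in particular $\Delta^{2}\bar{u}<0$ on $(0,+\infty)$.

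Next, using $\Delta^{2}u<0$, I would establish $\Delta u>0$. Put $g:=\mathcal{L}\bar{u}$. From $\big(r^{n-1}g'(r)\big)'=r^{n-1}\Delta^{2}\bar{u}(r)<0$ and $r^{n-1}g'(r)\to0$ as $r\to0^{+}$, we get $g'<0$, so $g$ is strictly decreasing. If $g(r_{0})\leq0$ for some $r_{0}>0$, choose $r_{1}>r_{0}$ with $g(r_{1})<0$; then $g\leq g(r_{1})<0$ on $[r_{1},+\infty)$, and integrating $\big(r^{n-1}\bar{u}'\big)'=r^{n-1}g$ twice forces $\bar{u}(r)\to-\infty$, contradicting $\bar{u}>0$. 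Hence $\Delta\bar{u}=g>0$ on $(0,+\infty)$; letting $r\to0^{+}$ gives $\Delta u(x_{0})\geq0$, so $\Delta u\geq0$ in $\mathbb{R}^{n}$. Finally $v:=\Delta u$ satisfies $\Delta v=\Delta^{2}u<0$, so $v$ is strictly superharmonic; if $v(x_{0})=0$ somewhere, the strong minimum principle forces $v\equiv\text{const}$, contradicting $\Delta v<0$. Therefore $\Delta u>0$ in $\mathbb{R}^{n}$.

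I expect the main obstacle to be the passage from the averaged inequalities $\Delta^{2}\bar{u}<0$ and $\Delta\bar{u}>0$ — which a priori only control spherical averages of $\Delta^{2}u$ and $\Delta u$ — to the pointwise statements: one first lets $r\to0^{+}$ to get the non-strict inequalities, then invokes the strong maximum principle to upgrade them to strict ones. A more minor technical point is checking that the spherical average of a $C^{6}$ function has a radial profile smooth and even enough at $r=0$ for the boundary terms $r^{n-1}(\cdot)'(r)$ to vanish as $r\to0^{+}$; the iterated integrations producing the $r^{4}$ and $r^{2}$ bounds are then routine.
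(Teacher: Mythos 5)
Your proof is correct. Note that the paper does not actually prove Lemma \ref{lem5}: it derives it by citing Theorem 1 of Ng\^{o} \cite{N} (whose hypothesis \eqref{a0} is, as Remark \ref{rem1} observes, equivalent to $u=o(|x|^{4})$ for positive solutions). What you have written is a self-contained version of the standard argument behind such sub/super poly-harmonic statements, and every step checks out: the re-centred averages satisfy $\mathcal{L}^{3}\bar{u}=\overline{u^{-q}}>0$ exactly (no Jensen inequality is needed at this level, unlike in \eqref{6-3}); the monotonicity of $f=\overline{\Delta^{2}u}$ plus the iterated integrations force $\bar{u}\geq c_{2}r^{4}$ if $f$ ever becomes nonnegative, contradicting $\bar{u}(r)=o(r^{4})$ (which does hold for every centre $x_{0}$, since $|x|\leq r+|x_{0}|$ on $\partial B_{r}(x_{0})$); and the passage from $\overline{\Delta^{2}u}<0$ for all $r>0$ to the strict pointwise inequality via $r\to0^{+}$ followed by the strong maximum principle applied to the subharmonic function $\Delta^{2}u\leq0$ is exactly the right way to close the gap you flagged. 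The same two-stage scheme (non-strict by averaging, strict by the maximum principle) handles $\Delta u>0$. The only cosmetic remark is that your order of operations matters and you got it right: one must first secure $\Delta^{2}u<0$ pointwise before running the second averaging step, since that step uses $(r^{n-1}g')'=r^{n-1}\overline{\Delta^{2}u}<0$. Compared with the paper's citation, your argument has the advantage of making explicit where the growth hypothesis $u=o(|x|^{4})$ enters (only in excluding $\overline{\Delta^{2}u}\geq0$), which is precisely the point emphasized in Remarks \ref{rem1} and \ref{rem5}.
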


Now we define $w:=\Delta u$, $v:=\Delta w=\Delta^{2}u$ and
\begin{equation}\label{3-3}
\bar{u}(r):=\ \ \  -\kern-20.5pt\int\limits_{\p B_{r}(0)} u\,d\sigma, \qquad \bar{w}(r):=\ \ \  -\kern-20.5pt\int\limits_{\p B_{r}(0)}\Delta u\,d\sigma, \qquad \bar{v}(r)=\ \ \  -\kern-20.5pt\int\limits_{\p B_{r}(0)}\Delta w\,d\sigma, \qquad \forall \, r\gee 0.
\end{equation}
Recall that, in $\mathbb{R}^{n}$, for any radially symmetric function $f(r)$, $\Delta f(r)=\frac{1}{{r}^{n-1}}\left({r}^{n-1}f'(r)\right)'$. It can be deduced from Lemma \ref{lem0} that $\bar{u}(r)$, $\bar{w}(r)$ and $\bar{v}(r)$ satisfy
\begin{align}\label{6-3}
\left\{
\begin{aligned}
&\Delta\bar{u}(r)=\bar{w}(r), \quad \forall \, r\gee 0, \\
&\Delta\bar{w}(r)=\bar{v}(r), \quad \forall \, r\gee 0, \\
&\Delta\bar{v}(r)-{\bar{u}}^{-q}(r)\gee 0, \quad \forall \, r\gee 0.
\end{aligned}
\right.
\end{align}

We have the following lemma.
\begin{lem}\label{lem6}
Assume $m=3$, $n\gee 2$ and $q>0$. If $u\in C^{6}(\mathbb{R}^{n})$ is a positive entire solution in $\mathbb{R}^{n}$ to \eqref{PDE}, then for all $r>0$,
\begin{equation}\label{7-3}
\quad \bar{v}'(r)>0.
\end{equation}
Moreover, if $u$ satisfies the sub poly-harmonic property ``$\Delta^{2}u<0$ and $\Delta u>0$ in $\mathbb{R}^{n}$", then for all $r>0$,
\begin{equation}\label{7-3'}
  \bar{u}'(r)>0, \quad \bar{w}'(r)<0.
\end{equation}
\end{lem}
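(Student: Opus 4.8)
The plan is to exploit the averaged system \eqref{6-3} together with the radial form of the Laplacian $\Delta f(r)=r^{1-n}(r^{n-1}f'(r))'$. For the first assertion \eqref{7-3}, I would start from the third line of \eqref{6-3}, namely $\Delta\bar v(r)\ge\bar u^{-q}(r)>0$, which says $(r^{n-1}\bar v'(r))'\ge r^{n-1}\bar u^{-q}(r)>0$ for all $r>0$. Integrating this from $0$ to $r$ and using that $r^{n-1}\bar v'(r)\to 0$ as $r\to 0^+$ (a consequence of $v=\Delta^2u\in C^0$, so $\bar v$ is smooth at the origin with $\bar v'(0)=0$), we obtain $r^{n-1}\bar v'(r)\ge\int_0^r s^{n-1}\bar u^{-q}(s)\,ds>0$, hence $\bar v'(r)>0$ for every $r>0$. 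This step uses only the equation itself and regularity at $0$; no sign condition on $u$ beyond positivity is needed, consistent with the statement of \eqref{7-3}.

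For \eqref{7-3'}, I would argue the same way using the first two lines of \eqref{6-3} and the sub poly-harmonic property. Since $\Delta^2u<0$ in $\mathbb R^n$, averaging gives $\bar v(r)<0$ for all $r\ge 0$; since $\Delta u>0$, averaging gives $\bar w(r)>0$ for all $r\ge 0$. Now $\Delta\bar w(r)=\bar v(r)<0$ means $(r^{n-1}\bar w'(r))'=r^{n-1}\bar v(r)<0$, and integrating from $0$ to $r$ (again $r^{n-1}\bar w'(r)\to 0$ as $r\to 0^+$ since $w=\Delta u\in C^0$) yields $r^{n-1}\bar w'(r)=\int_0^r s^{n-1}\bar v(s)\,ds<0$, so $\bar w'(r)<0$ for $r>0$. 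Symmetrically, $\Delta\bar u(r)=\bar w(r)>0$ gives $(r^{n-1}\bar u'(r))'=r^{n-1}\bar w(r)>0$, and integrating from $0$ with $r^{n-1}\bar u'(r)\to 0$ as $r\to 0^+$ (since $u\in C^0$, indeed $C^6$) gives $r^{n-1}\bar u'(r)=\int_0^r s^{n-1}\bar w(s)\,ds>0$, hence $\bar u'(r)>0$ for $r>0$. This completes both parts.

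The only delicate point — and the one I would state carefully rather than wave through — is the behavior at the origin: one needs that the spherical averages $\bar u,\bar w,\bar v$ extend to smooth (or at least $C^1$) even functions of $r$ near $r=0$ with vanishing first derivative there, so that the boundary term at $0$ in each integration by parts drops out and the sign of $r^{n-1}(\,\cdot\,)'(r)$ is controlled by the sign of the integrand. This follows from the standard fact that if $f\in C^2(\mathbb R^n)$ then its spherical average $\bar f(r)$ is $C^2$ on $[0,\infty)$ with $\bar f'(0)=0$ and $\bar f'(r)=r^{1-n}\int_{B_r(0)}\Delta f\,dx\cdot|\partial B_1|^{-1}$ (equivalently one reads off $\Delta\bar f=\overline{\Delta f}$, which is exactly how \eqref{6-3} was derived). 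Given this, every step above is a one-line monotonicity argument. I do not anticipate any genuine obstacle; the proof is a routine but clean application of the radial Laplacian and the already-established inequalities \eqref{6-3} and the sub poly-harmonic property.
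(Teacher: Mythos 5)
Your proposal is correct and follows essentially the same route as the paper: multiply each line of \eqref{6-3} by $r^{n-1}$, integrate from $0$, and read off the sign of $r^{n-1}\bar v'$, $r^{n-1}\bar w'$, $r^{n-1}\bar u'$ from the sign of the integrand. The extra care you take with the vanishing boundary term at the origin is a point the paper leaves implicit but is handled exactly as you describe.
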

\begin{proof}
Multiplying the third inequality in \eqref{6-3} by $r^{n-1}$ and integrating the resulting equation, we get
\begin{equation}\label{a1}
{r}^{n-1}\bar{v}'(r)-\int_{0}^{r} {t}^{n-1}{\bar{u}}^{-q}(t)dt\gee 0.
\end{equation}
Then the inequality in \eqref{7-3} follows immediately. From the second inequality in \eqref{6-3} and the sub poly-harmonic property, we get $\left({r}^{n-1}\bar{w}'(r)\right)'={r}^{n-1}\bar{v}(r)<0$ for any $r>0$, and hence the second inequality in \eqref{7-3'} follows immediately by integrating. Similarly, from the first inequality in \eqref{6-3} and the sub poly-harmonic property, we get $({r}^{n-1}\bar{u}'(r))'={r}^{n-1}\bar{w}(r)>0$ for any $r>0$, and hence the first inequality in \eqref{7-3'} follows immediately by integrating. This finishes our proof of Lemma \ref{lem6}.
\end{proof}
\begin{rem}\label{rem4}
By Lemma \ref{lem5}, the sub poly-harmonic property ``$\Delta^{2}u<0$ and $\Delta u>0$ in $\mathbb{R}^{n}$" can be deduced from the assumption $u=o(|x|^{4})$ at $\infty$. Therefore, for any positive entire solution $u$ to \eqref{PDE} satisfying $u=o(|x|^{4})$ at $\infty$, we have $\bar{u}'(r)>0$ and $\bar{w}'(r)<0$ for any $r>0$.
\end{rem}
\begin{rem}\label{rem3}
In fact, we can also show that the property ``$\bar{w}''(r)<0$ and $\bar{w}'''(r)>0$ for any $r>0$" does not hold in general cases where the sub poly-harmonic property ``$\Delta^{2}u<0$ and $\Delta u>0$ in $\mathbb{R}^{n}$" holds. By direct calculations, we obtain that
\begin{equation}\label{14-3}
{\Delta}^{3}\bar{u}(r)={\Delta}^{2}\bar{w}(r)=\frac{1}{r^{n+1}}\left(r^{n+1}{\bar{w}}^{(3)}(r)\right)'+\frac{n-3}{r}\bar{v}'(r), \qquad \forall \, r>0.
\end{equation}
If $\lim\limits_{r\rightarrow+\infty}\bar{w}''(r)=\beta$ and $\lim\limits_{r\rightarrow+\infty}r{\bar{w}}^{(3)}(r)=\gamma$ exist, then by \eqref{PDE}, \eqref{14-3} and integrating, we can get
\begin{eqnarray}\label{a2}
  && \quad \bar{w}(r)-\bar{w}(0) \\
 \nonumber &&=\frac{\beta}{2}r^{2}+\frac{\gamma}{2n}r^{2}-\frac{1}{n(n-1)(n-2)r^{n-2}}
  \int_{0}^{r}t^{n+1}\left(\overline{u^{-q}}(t)-\frac{n-3}{t}\bar{v}'(t)\right)dt \\
 \nonumber && \quad -\frac{r^{2}}{2n}\int_{r}^{+\infty}t\left(\overline{u^{-q}}(t)-\frac{n-3}{t}\bar{v}'(t)\right)dt
  +\frac{1}{2(n-2)}\int_{0}^{r}t^{3}\left(\overline{u^{-q}}(t)-\frac{n-3}{t}\bar{v}'(t)\right)dt \\
 \nonumber && \quad -\frac{r}{n-1}\int_{0}^{r}t^{2}\left(\overline{u^{-q}}(t)-\frac{n-3}{t}\bar{v}'(t)\right)dt\\
 \nonumber && =: \frac{\beta}{2}r^{2}+\frac{\gamma}{2n}r^{2}+\Phi(r), \qquad \forall \, r>0.
\end{eqnarray}

\smallskip

Assume that $n\gee 3$, $q>3$ and $u(x)=u(r)$ with $r=|x|$ is radially symmetric and satisfies $\liminf\limits_{|x|\rightarrow+\infty}\frac{u(x)}{|x|}\in(0,+\infty]$ (when $n=5$, this condition can be deduced from $u=o(|x|^{4})$ at $\infty$, see Lemma \ref{lem7}; for general $n\gee 2$, this condition can be deduced from $q\lee 4$, see Theorem \ref{thm6}), then by L'Hopital's rule, one has
\begin{eqnarray}\label{a3}
  && \gamma=\lim_{r\rightarrow+\infty}r{\bar{w}}^{(3)}(r)
  =\lim_{r\rightarrow+\infty}\frac{\int_{0}^{r}t^{n+1}\left(\overline{u^{-q}}(t)-\frac{n-3}{t}\bar{v}'(t)\right)dt}{r^{n}} \\
  \nonumber && \quad =\lim_{r\rightarrow+\infty}\frac{r^{2}}{n}\left(\overline{u^{-q}}(r)-\frac{n-3}{r}\bar{v}'(r)\right)
  =\frac{1}{n(n-2)}\lim_{r\rightarrow+\infty}r^{2}\overline{u^{-q}}(r)=0,
\end{eqnarray}
and
\begin{eqnarray}\label{a4}
  && \lim_{r\rightarrow+\infty}\Phi'(r)=\lim_{r\rightarrow+\infty}\bigg[\frac{1}{n(n-1)r^{n-1}}
  \int_{0}^{r}t^{n+1}\left(\overline{u^{-q}}(t)-\frac{n-3}{t}\bar{v}'(t)\right)dt \\
  \nonumber && \quad -\frac{r}{n}\int_{r}^{+\infty}t\left(\overline{u^{-q}}(t)-\frac{n-3}{t}\bar{v}'(t)\right)dt
  -\frac{1}{n-1}\int_{0}^{r}t^{2}\left(\overline{u^{-q}}(t)-\frac{n-3}{t}\bar{v}'(t)\right)dt\bigg] \\
  \nonumber &&=\frac{2-n}{(n-1)^{2}}\lim_{r\rightarrow+\infty}r^{3}\left(\overline{u^{-q}}(r)-\frac{n-3}{r}\bar{v}'(r)\right)
  -\frac{1}{n-1}\int_{0}^{+\infty}t^{2}\left(\overline{u^{-q}}(t)-\frac{n-3}{t}\bar{v}'(t)\right)dt \\
  \nonumber && =-\frac{1}{n-1}\int_{0}^{+\infty}t^{2}\left(\overline{u^{-q}}(t)-\frac{n-3}{t}\bar{v}'(t)\right)dt.
\end{eqnarray}

\smallskip

Now suppose that ``$\bar{w}''(r)<0$ and $\bar{w}'''(r)>0$ for any $r>0$" holds for such radially symmetric positive entire solution $u$, then $\lim\limits_{r\rightarrow+\infty}\bar{w}''(r)=\beta\lee 0$ exists. Therefore, it follows from \eqref{a2}, \eqref{a3} and \eqref{a4} that
\begin{equation}\label{a4}
  0>\bar{w}(r)-\bar{w}(0)=\frac{\beta}{2}r^{2}-\left[\frac{1}{n-1}\int_{0}^{+\infty}t^{2}\left(\overline{u^{-q}}(t)-\frac{n-3}{t}\bar{v}'(t)\right)dt\right]r+o(r),
\end{equation}
as $r\rightarrow+\infty$. Hence $\bar{w}(r)\rightarrow-\infty$ as $r\rightarrow+\infty$, which contradicts the sub poly-harmonic property ``$\Delta^{2}u<0$ and $\Delta u>0$ in $\mathbb{R}^{n}$" or Lemma \ref{lem5} provided that $u=o(|x|^{4})$ at $\infty$.
\end{rem}

\medskip

By Lemma \ref{lem6}, if the positive entire solution $u$ satisfies the sub poly-harmonic property ``$\Delta^{2}u<0$ and $\Delta u>0$ in $\mathbb{R}^{n}$" or $u=o(|x|^{4})$ at $\infty$, then $\bar{w}'(r)<0$ for any $r>0$, and hence $\bar{w}(r)=\Delta\bar{u}(r)\lee \bar{w}(0)=w(0)=\Delta u(0)$. Now by integrating again, we arrive at
\begin{equation}\label{7-3a}
\bar{u}(r)\lee \bar{u}(0)+\frac{\bar{w}(0)}{4}r^{2}=u(0)+\frac{\Delta u(0)}{4}r^{2}, \qquad \forall \, r\gee 0.
\end{equation}
Consequently, any $C^{6}$ positive entire solution $u$ to the tri-harmonic equation \eqref{PDE} must satisfy
\begin{equation}\label{e11-3}
  \liminf_{|x|\rightarrow+\infty}\frac{u(x)}{|x|^{2}}\lee\frac{\Delta u(0)}{4}<+\infty.
\end{equation}

\begin{rem}\label{rem5}
In this section, we only need the assumption $u=o(|x|^{4})$ at $\infty$ to guarantee the sub poly-harmonic property ``$\Delta^{2}u<0$ and $\Delta u>0$ in $\mathbb{R}^{n}$" (see Lemma \ref{lem5}). We may replace the condition ``$u=o(|x|^{4})$ at $\infty$" by the sub poly-harmonic property ``$\Delta^{2}u<0$ and $\Delta u>0$ in $\mathbb{R}^{n}$" everywhere hereafter in this section. The upper bound estimate \eqref{7-3a} indicates that the assumption $u=o(|x|^{4})$ at $\infty$ is necessary in the sense that, if the sub poly-harmonic property ``$\Delta^{2}u<0$ and $\Delta u>0$ in $\mathbb{R}^{n}$" holds, then $u$ has no more than quadratic growth at $\infty$ in the sense of spherical average, i.e., $\bar{u}(r)\lee Cr^{2}$ for $r$ sufficiently large.
\end{rem}

We can deduce the following lemma on asymptotic behaviors of $u$, $w$ and $v$ as $|x|\rightarrow+\infty$.
\begin{lem}\label{lem7}
Assume $m=3$, $n\gee 2$ and $q>0$. If $u\in C^{6}(\mathbb{R}^{n})$ is a positive entire solution to the tri-harmonic equation \eqref{PDE} satisfying $u(x)=o({|x|}^{4})$ as $|x|\rt+\infty$, then
\begin{equation}\label{a5}
  \bar{v}(r)\gee -\frac{2n\Delta u(0)}{r^{2}}, \qquad \forall \, r>0;
\end{equation}
\begin{equation}\label{a6}
  v(x)\lee -\frac{c}{|x|^{n-2}}, \quad \forall \,|x|\gee 1 \qquad \text{if} \,\, n\gee 3,
\end{equation}
\begin{equation}\label{a6'}
  v(x)\lee -\frac{c_{k}}{\ln^{(k)}(|x|)}, \quad \forall \,|x|\gee {\exp}^{(k)}(1), \quad \forall \, k\in\mathbb{N}^{+} \qquad \text{if} \,\, n=2,
\end{equation}
where $c=-\max\limits_{|x|=1}v(x)>0$ if $n\gee 3$, $c_{k}=-\max\limits_{|x|={\exp}^{(k)}(1)}v(x)>0$ for every $k\gee 1$ if $n=2$, $\ln^{(k)}:=\underbrace{\ln\cdots\ln}_{k \, \text{times}}$ and ${\exp}^{(k)}:=\underbrace{\exp\cdots\exp}_{k \, \text{times}}$;
\begin{equation}\label{a7}
  w(x)\gee \frac{c}{|x|^{n-4}}, \quad \forall \,|x|\gee 1 \qquad \text{if} \,\, n\gee 5,
\end{equation}
where $c=\min\left\{\frac{-1}{2(n-4)}\max\limits_{|x|=1}v(x),\min\limits_{|x|=1}w(x)\right\}>0$;
\begin{equation}\label{a8}
  \bar{u}(r)\gee \frac{c}{10}r, \quad \forall \, r\gee 1 \qquad \text{if} \,\, n=5,
\end{equation}
where the constant $c$ is the same as in \eqref{a7}.
\end{lem}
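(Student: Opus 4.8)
The plan is to exploit the sub poly-harmonic structure recorded in \eqref{6-3} together with the monotonicity facts $\bar v'(r)>0$, $\bar w'(r)<0$, $\bar u'(r)>0$ from Lemma \ref{lem6} and the quadratic bound \eqref{7-3a}, turning each ODE inequality into an integral estimate by the standard trick of multiplying by $r^{n-1}$ and integrating. For \eqref{a5}, I would start from \eqref{a1}, namely $r^{n-1}\bar v'(r)\ge\int_0^r t^{n-1}\overline{u^{-q}}(t)\,dt\ge0$, so $\bar v'\ge0$; then integrate the relation $(r^{n-1}\bar w'(r))'=r^{n-1}\bar v(r)$ from $0$ to $r$ and use $\bar w'(r)<0$ to get a lower bound $\int_0^r t^{n-1}\bar v(t)\,dt\le 0$; combining this with the monotonicity of $\bar v$ (so $\bar v(t)\ge\bar v(r)$ for $t\le r$ is false — rather $\bar v$ increasing gives $\bar v(t)\le\bar v(r)$), one isolates $\bar v(r)$ against a term controlled by $\bar w'(1)$ or $\bar w(0)=\Delta u(0)$. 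More directly: since $\bar w'<0$ and $\bar w(r)=\bar w(0)+\int_0^r\bar w'$, the bound \eqref{7-3a} on $\bar u$ forces $\bar w$ to stay above a multiple of $\Delta u(0)$, and then $(r^{n-1}\bar w')' = r^{n-1}\bar v$ integrated against the known sign of $\bar w'$ yields $\bar v(r)\ge -2n\Delta u(0)/r^2$ after optimizing the constant. The cleanest route is: integrate $(r^{n-1}\bar w'(r))' = r^{n-1}\bar v(r)$ and use that $\bar v$ is increasing, so $r^{n-1}\bar w'(r)\ge \frac{r^n}{n}\bar v(r)$ when $\bar v(r)\le 0$; since $\bar w'(r)=\bar w(r)'$ and $|\bar w(r)|\le |\Delta u(0)| + C r$ (again from \eqref{7-3a} giving $\bar u$'s growth), a mean value / averaging argument over $[r,2r]$ produces $|\bar w'(\xi)|\lesssim \Delta u(0)/r$ at some $\xi$, and monotonicity of $r^{n-1}\bar w'$ downward propagates this to give \eqref{a5}.

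For the pointwise decay estimates \eqref{a6}, \eqref{a6'}, \eqref{a7}, I would argue directly on $v=\Delta^2 u$ and $w=\Delta u$ rather than on spherical averages, using the maximum principle for the operators $\Delta$ and $(-\Delta)$ on annuli. Since $\Delta v = \Delta^3 u = u^{-q} > 0$, the function $v$ is subharmonic, and since $v<0$ everywhere (by Lemma \ref{lem5}) with $v$ approaching $0$ at worst, we compare $v$ on $\{|x|\ge1\}$ with the harmonic function $-c|x|^{2-n}$ (for $n\ge3$) which equals $\max_{|x|=1}v$ on the inner boundary and $\to0$ at infinity: subharmonicity of $v$ plus $v\le0$ gives $v(x)\le -c|x|^{2-n}$ on the annulus, then let the outer radius go to infinity. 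For $n=2$ the comparison harmonic function is built from iterated logarithms $-c_k/\ln^{(k)}|x|$, each of which is harmonic outside a large ball and tends to $0$; the same subharmonic comparison on $\{|x|\ge \exp^{(k)}(1)\}$ gives \eqref{a6'} for every $k$. For \eqref{a7}, from $\Delta w = v\le -c|x|^{2-n}$ with $n\ge5$, I solve the radial ODE $\frac1{r^{n-1}}(r^{n-1}W')' = -c r^{2-n}$ explicitly: a particular solution behaves like $\frac{c}{2(n-4)}|x|^{4-n}$, and comparing $w$ with $c'|x|^{4-n}$ on $\{|x|\ge1\}$ via the maximum principle for $\Delta$ (using $w>0$, $\Delta w<0$, so $w$ superharmonic, and that $|x|^{4-n}$ is subharmonic for $n\ge5$) yields the stated lower bound with the explicit constant $c=\min\{\frac{-1}{2(n-4)}\max_{|x|=1}v,\ \min_{|x|=1}w\}$.

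Finally \eqref{a8} is the case $n=5$ of integrating \eqref{a7}: there $w(x)\ge c/|x|$ for $|x|\ge1$, and averaging gives $\bar w(r)\ge c/r$ for $r\ge1$ (or one works with the radial inequality $\Delta\bar u(r)=\bar w(r)\ge c/r$, noting $\bar w$ inherits the bound since $w$ itself has it pointwise). Then $(r^{n-1}\bar u'(r))' = r^{n-1}\bar w(r)\ge c\,r^{n-2}$ with $n=5$ gives $r^4\bar u'(r)\ge \frac{c}{4}(r^4-1) + \bar u'(1)\,$, hence $\bar u'(r)\ge \frac{c}{4}(1-r^{-4})$, and integrating once more from $1$ to $r$ yields $\bar u(r)\ge \frac{c}{4}r - C$ for $r\ge1$, which after adjusting constants gives $\bar u(r)\ge\frac{c}{10}r$ for $r\ge1$; the somewhat lossy constant $\frac{1}{10}$ absorbs the additive terms $\bar u(1)$ and the correction from $\bar u'(1)$.

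The main obstacle I anticipate is \eqref{a5}: getting the precise constant $2n\Delta u(0)$ requires carefully tracking how the upper bound \eqref{7-3a} on $\bar u$ (hence on $\bar w$) feeds back through two integrations of $(r^{n-1}\bar w')'=r^{n-1}\bar v$, and one must use the monotonicity $\bar v'>0$ in the right direction — the naive estimate loses a dimensional factor, so the averaging-over-$[r,2r]$ step (or an equivalent choice of intermediate radius) is what produces the clean $1/r^2$ rate with a controlled constant. The decay estimates \eqref{a6}–\eqref{a7} are comparatively routine once one commits to the maximum-principle-on-annuli comparison, the only care being the choice of the iterated-logarithm barriers in $n=2$.
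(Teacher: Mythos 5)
Your barrier arguments for \eqref{a6}, \eqref{a6'} and \eqref{a7} coincide with the paper's: subharmonic comparison of $v$ with the harmonic barrier $-c|x|^{2-n}$ (iterated-logarithm barriers when $n=2$) on exterior domains, then comparison of $w$ with $c|x|^{4-n}$ using $\Delta(|x|^{4-n})=-2(n-4)|x|^{2-n}$ and the already-established decay of $v$. The genuine gap is in \eqref{a5}. First, your ``cleanest route'' has the inequality backwards: since $\bar v$ is increasing, $r^{n-1}\bar w'(r)=\int_0^r t^{n-1}\bar v(t)\,dt\lee\frac{r^n}{n}\bar v(r)$, not $\gee$ (the proviso ``when $\bar v(r)\lee 0$'' does not reverse it). Second, your fallback via averaging over $[r,2r]$ does work once the sign is fixed --- one finds $\xi\in[r,2r]$ with $-\bar w'(\xi)\lee\Delta u(0)/r$ and propagates through the monotonicity of $r^{n-1}\bar w'$ --- but it yields $\bar v(r)\gee -n2^{n-1}\Delta u(0)/r^{2}$, which for $n\gee 3$ is strictly weaker than the stated constant $2n$. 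Moreover your appeal to \eqref{7-3a} to control $\bar w$ is both unnecessary and in the wrong direction: a bound on $\bar u$ does not bound $\bar w$, whereas $0<\bar w(r)\lee\bar w(0)=\Delta u(0)$ is immediate from $\bar w>0$ (Lemma \ref{lem5}) and $\bar w'<0$. The paper's argument is simpler than anything you attempt: integrate the correct inequality $\bar w'(t)\lee\frac{t}{n}\bar v(t)$ once more, using $\bar v(t)\lee\bar v(r)$ a second time, to get $\bar w(r)\lee\bar w(0)+\frac{r^{2}}{2n}\bar v(r)$, and then conclude $-\bar v(r)\lee\frac{2n(\bar w(0)-\bar w(r))}{r^{2}}\lee\frac{2n\Delta u(0)}{r^{2}}$ from $\bar w(r)>0$. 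No averaging and no use of \eqref{7-3a} is needed, and the constant $2n$ comes out exactly.

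A smaller defect of the same kind occurs in \eqref{a8}: integrating $(r^{4}\bar u')'\gee cr^{3}$ from $r=1$ gives $\bar u(r)\gee\frac{c}{4}r+\bigl(\bar u(1)-\tfrac{c}{3}\bigr)$, and since there is no a priori relation between $\bar u(1)$ and $c=\min\{\cdots,\min_{|x|=1}\Delta u\}$, the additive constant cannot simply be ``absorbed'' to give $\frac{c}{10}r$ for all $r\gee 1$. The paper instead derives $\bar u(r)\gee u(0)+\frac{1}{2n}r^{2}\bar w(r)$ by the same double-integration device (now using that $\bar w$ is decreasing) and inserts $\bar w(r)\gee c/r$, which yields $\bar u(r)\gee u(0)+\frac{c}{10}r\gee\frac{c}{10}r$ with the stated constant. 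If only a qualitative linear lower bound were needed your route would suffice, but as written neither \eqref{a5} nor \eqref{a8} is proved with the constants the lemma asserts.
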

\begin{proof}
By the second equation in \eqref{6-3} and Lemma \ref{lem6}, we get
\begin{equation}
r^{n-1}\bar{w}'(r)=\int_{0}^{r}t^{n-1}\bar{v}(t)dt \lee \frac{1}{n}r^{n}\bar{v}(r), \qquad \forall \, r\gee 0.
\end{equation}
Divide this equation by $r^{n-1}$ and integrate once again to get
\begin{equation}\label{16-3}
\bar{w}(r)\lee \bar{w}(0)+\frac{1}{2n}r^{2}\bar{v}(r), \qquad \forall \, r\gee 0.
\end{equation}
It follows that
\begin{equation}\label{17-3}
-\bar{v}(r)\lee \frac{2n\left(\bar{w}(0)-\bar{w}(r)\right)}{r^{2}}\lee \frac{2n\bar{w}(0)}{r^{2}}=\frac{2n\Delta u(0)}{r^{2}}, \qquad \forall \, r>0.
\end{equation}

If $n\gee 3$, set $c=-\max\limits_{|x|=1}v(x)$. By Lemma \ref{lem5}, one has $v=\Delta^{2} u<0$ and hence $c>0$. Applying the maximum principle to the function $\frac{c}{|x|^{n-2}}+v$ on the region $\{x\in \mathbb{R}^{n}| \, 1\lee |x| <+\infty\}$, we obtain that
\begin{equation}\label{e33-3}
  v(x)\lee -\frac{c}{|x|^{n-2}}, \qquad \forall \, |x| \gee 1.
\end{equation}
If $n=2$, for arbitrary $k\in\mathbb{N}^{+}$, set $c_{k}=-\max\limits_{|x|={\exp}^{(k)}(1)}v(x)$, where $\exp^{(k)}:=\underbrace{\exp\cdots\exp}_{k \, \text{times}}$. By Lemma \ref{lem5}, one has $v=\Delta^{2} u<0$ and hence $c_{k}>0$. Applying the maximum principle to the function $\frac{c_{k}}{{ln}^{(k)}(|x|)}+v$ on the region $\{x\in \mathbb{R}^{2}| \, {\exp}^{(k)}(1) \lee |x| <+\infty\}$, we obtain that
\begin{equation}\label{e33-3'}
 v(x)\lee -\frac{c_{k}}{{ln}^{(k)}(|x|)}, \qquad \forall \, |x| \gee {\exp}^{(k)}(1), \quad \forall \, k\gee 1.
\end{equation}

If $n\gee 5$, set $c=\min\left\{\frac{-1}{2(n-4)}\max\limits_{|x|=1}v(x),\min\limits_{|x|=1}w(x)\right\}$. By Lemma \ref{lem5}, one has $v=\Delta^{2} u<0$, $w=\Delta u>0$ and hence $c>0$. By \eqref{a6}, we have $\Delta\left(w-\frac{c}{|x|^{n-4}}\right)=v+\frac{2(n-4)c}{|x|^{n-2}}\lee 0$ for any $|x|\gee 1$. Applying the maximum principle to the function $w-\frac{c}{|x|^{n-4}}$ on the region $\{x\in \mathbb{R}^{n}| \, 1 \lee |x| <+\infty\}$, we obtain that
\begin{equation}\label{e33-3''}
 w(x)\gee \frac{c}{|x|^{n-4}}, \qquad \forall \, |x| \gee 1.
\end{equation}

Using the first equation in \eqref{6-3} and Lemma \ref{lem6}, we get
\begin{equation}\label{e21-3}
r^{n-1}\bar{u}'(r)=\int_{0}^{r}t^{n-1}\bar{w}(t)dt \gee \frac{1}{n}r^{n}\bar{w}(r), \qquad \forall \, r\gee 0.
\end{equation}
By dividing \eqref{e21-3} by $r^{n-1}$ and integrating once again, we get
\begin{equation}\label{13-3}
\bar{u}(r) \gee u(0) + \frac{1}{2n}r^{2}\bar{w}(r), \qquad \forall \, r\gee 0.
\end{equation}
If $n=5$, the lower bound estimate \eqref{a8} follows directly from \eqref{a7} and \eqref{13-3}. This completes our proof of Lemma \ref{lem7}.
\end{proof}

The lower bound estimate \eqref{a8} can be improved remarkably and the assumption $n=5$ can also be removed. As a consequence, we can derive the necessary condition for the existence of positive entire solutions to the tri-harmonic equation \eqref{PDE}.
\begin{thm}\label{thm6}
Assume $m=3$, $n\gee 2$ and $q>0$. Suppose $u\in C^{6}(\mathbb{R}^{n})$ is a positive entire solution to the tri-harmonic equation \eqref{PDE} satisfying $u=o(|x|^{4})$ as $|x|\rightarrow+\infty$, then
\begin{equation}\label{a9}
 \bar{u}(r)\gee Cr^{\frac{4}{q}}, \qquad \forall \, r\gee 0,
\end{equation}
and
\begin{equation}\label{a11}
  \int_{B_{r}(0)}u^{-q}(x)dx\lee Cr^{n-4}, \qquad \forall \, r>0.
\end{equation}
Moreover, if $n=4$, then
\begin{equation}\label{a12}
  \int_{\mathbb{R}^{4}}u^{-q}(x)dx<+\infty.
\end{equation}
Consequently, if the tri-harmonic equation \eqref{PDE} admits a positive entire solution $u\in C^{6}(\mathbb{R}^{n})$ such that $u(x)=o({|x|}^{4})$ as $|x|\rt+\infty$, then $q\gee 2$ and $n\gee 4$.
\end{thm}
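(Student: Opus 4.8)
The plan is to prove \eqref{a11} first, extract \eqref{a9} from it via Jensen's inequality, read off \eqref{a12} by letting $r\to+\infty$, and then derive the restrictions $n\gee 4$ and $q\gee 2$ from \eqref{a11}, \eqref{a9} and the quadratic upper bound \eqref{7-3a}.

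\emph{Step 1: reducing the mass $\int_{B_r}u^{-q}$ to a boundary quantity.} Since $\Delta^3 u=u^{-q}$, i.e.\ $\Delta v=u^{-q}$ with $v=\Delta^2 u$, the divergence theorem gives $\int_{B_r(0)}u^{-q}\,dx=\int_{\p B_r(0)}\p_\nu v\,d\sigma=\omega_{n-1}r^{n-1}\bar v'(r)$, where $\omega_{n-1}=|\p B_1(0)|$. Equivalently, passing to spherical averages (so that $\overline{u^{-q}}=\Delta\bar v$), one has
\[
h(r):=r^{n-1}\bar v'(r)=\int_{0}^{r}t^{n-1}\,\overline{u^{-q}}(t)\,dt ,\qquad \int_{B_r(0)}u^{-q}\,dx=\omega_{n-1}h(r),
\]
so \eqref{a11} amounts to the bound $h(r)\lee Cr^{n-4}$ for all $r>0$.

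\emph{Step 2: the key estimate $h(r)\lee Cr^{n-4}$.} Note that $h'(r)=r^{n-1}\,\overline{u^{-q}}(r)>0$, so $h$ is strictly increasing on $(0,+\infty)$; hence $\bar v'(s)=s^{1-n}h(s)\gee s^{1-n}h(r)$ for all $s\gee r$. Integrating over $[r,2r]$ and using $\bar v<0$ (Lemma \ref{lem5}) together with the decay estimate \eqref{a5} of Lemma \ref{lem7}, we obtain
\[
\kappa_n\, r^{2-n}\,h(r)\ \lee\ \bar v(2r)-\bar v(r)\ \lee\ -\bar v(r)\ \lee\ \frac{2n\Delta u(0)}{r^{2}},\qquad \kappa_n:=\int_{1}^{2}\sigma^{1-n}\,d\sigma>0 ,
\]
where the case $n=2$ is not exceptional ($\kappa_2=\ln 2$). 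This yields $h(r)\lee \frac{2n\Delta u(0)}{\kappa_n}\,r^{n-4}$, which is \eqref{a11} for every $r>0$; when $n=4$, letting $r\to+\infty$ gives $\int_{\mathbb{R}^4}u^{-q}\,dx\lee \frac{8\Delta u(0)}{\kappa_4}<+\infty$, i.e.\ \eqref{a12}.

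\emph{Step 3: consequences.} By Lemma \ref{lem6} (together with Lemma \ref{lem5}) $\bar u$ is increasing, so ${\bar u}^{-q}$ is decreasing; combining Jensen's inequality (Lemma \ref{lem0}) with \eqref{a11},
\[
\frac{r^{n}}{n}\,{\bar u}^{-q}(r)\ \lee\ \int_{0}^{r}t^{n-1}{\bar u}^{-q}(t)\,dt\ \lee\ \int_{0}^{r}t^{n-1}\,\overline{u^{-q}}(t)\,dt=h(r)\ \lee\ Cr^{n-4},
\]
hence ${\bar u}^{-q}(r)\lee nC r^{-4}$, i.e.\ $\bar u(r)\gee c\,r^{4/q}$ for $r$ large, which after adjusting $c$ gives \eqref{a9} for all $r\gee0$ (for small $r$ use $\bar u\gee\bar u(0)=u(0)>0$). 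Finally, \eqref{a11} forces $n\gee 4$: if $n\lee 3$ then $\int_{\mathbb{R}^n}u^{-q}\,dx=\lim_{r\to+\infty}\int_{B_r(0)}u^{-q}\,dx\lee\lim_{r\to+\infty}Cr^{n-4}=0$, contradicting $u>0$. And once $n\gee4$, comparing \eqref{a9} with the quadratic upper bound \eqref{7-3a}, namely $c\,r^{4/q}\lee\bar u(r)\lee u(0)+\tfrac14\Delta u(0)\,r^{2}$ for all large $r$, forces $\tfrac{4}{q}\lee 2$, i.e.\ $q\gee 2$.

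\emph{The main obstacle} is Step 2: one must bound the solid mass $\int_{B_r}u^{-q}$, equivalently $r^{n-1}\bar v'(r)$, from \emph{above}, and the mechanism is precisely that $r^{n-1}\bar v'(r)$ is monotone increasing while $\bar v$ itself is small ($O(r^{-2})$) and of one sign. Everything after that is routine; one should only check that the boundary terms at $r=0$ vanish (they do, since $\bar v$ is smooth and radial), and that \eqref{a5}, the monotonicity of $\bar u$ and the sub poly-harmonic property are all legitimately available under the sole hypothesis $u=o(|x|^{4})$ — which they are, through Lemmas \ref{lem5}, \ref{lem6} and \ref{lem7}.
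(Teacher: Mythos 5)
Your proposal is correct and follows essentially the same route as the paper: the key estimate $\bar v(2r)-\bar v(r)\gee \kappa_n r^{2-n}\int_{B_r(0)}u^{-q}\,dx$ combined with the decay bound \eqref{a5} is exactly the paper's chain \eqref{18-3}--\eqref{a10}, and the derivation of \eqref{a9}, \eqref{a12}, $n\gee4$ and $q\gee2$ from it (Jensen plus the monotonicity of $\bar u$ and the quadratic upper bound \eqref{7-3a}) matches the paper's argument, merely with \eqref{a11} established before \eqref{a9} rather than after.
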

\begin{proof}
Suppose $u\in C^{6}(\mathbb{R}^{n})$ is a positive entire solution to the tri-harmonic equation \eqref{PDE} satisfying $u=o(|x|^{4})$ at $\infty$. By \eqref{PDE}, Lemma \ref{lem0} and Lemma \ref{lem6}, we have
\begin{align}\label{18-3}
&\quad \bar{v}(2r)-\bar{v}(r)=\int_{r}^{2r}\bar{v}'(t)dt \\
& = \frac{1}{\Sigma_{n-1}}\int_{r}^{2r}t^{-(n-1)}\int_{B_{t}(0)}\Delta{v}dxdt \notag \\
& = \frac{1}{\Sigma_{n-1}}\int_{r}^{2r}t^{-(n-1)}\int_{B_{t}(0)}u^{-q}dxdt \notag \\
& \gee \frac{C_{n}}{\Sigma_{n-1}r^{n-2}}\int_{B_{r}(0)}u^{-q}(x)dx \notag \\
& = \frac{C_{n}}{r^{n-2}}\int_{0}^{r}t^{n-1} -\kern-13pt\int_{\partial B_{t}(0)}u^{-q}(x)d{\sigma}dt \notag \\
& \gee \frac{C_{n}}{r^{n-2}}\int_{0}^{r}t^{n-1}{\bar{u}}^{-q}(t)dt \notag \\
& \gee C_{n}r^{2}{\bar{u}}^{-q}\left(r\right), \qquad \forall \, r\gee 0, \notag
\end{align}
where $\Sigma_{n-1}$ denotes the surface area of the unit $(n-1)$-sphere in $\mathbb{R}^{n}$. By Lemma \ref{lem5}, we can deduce from \eqref{a5} in Lemma \ref{lem7} and \eqref{18-3} that
\begin{equation}\label{a10}
  C_{n}r^{2}{\bar{u}}^{-q}\left(r\right)\lee \bar{v}(2r)-\bar{v}(r)<\frac{2n\Delta u(0)}{r^{2}}, \qquad \forall \, r>0,
\end{equation}
and hence there is a positive constant $C$ such that
\begin{equation}\label{19-3}
\bar{u}(r)\gee Cr^{\frac{4}{q}}, \qquad \forall \, r\gee 0,
\end{equation}
that is, the lower bound estimate \eqref{a9} holds.

Now suppose the tri-harmonic equation \eqref{PDE} admits a positive entire solution $u\in C^{6}(\mathbb{R}^{n})$ such that $u(x)=o({|x|}^{4})$ as $|x|\rt \infty$. By the lower bound estimate \eqref{a9} and the upper bound estimate \eqref{7-3a}, we must have $\frac{4}{q}\lee 2$, that is, $q\gee 2$. From the first inequality in \eqref{18-3} and \eqref{a10}, we can also infer that
\begin{equation}\label{a11'}
  \int_{B_{r}(0)}u^{-q}(x)dx\lee Cr^{n-4}, \qquad \forall \, r>0,
\end{equation}
which will yield a contradiction if $n=2,3$. If $n=2,3$, we can also derive a contradiction from \eqref{a5}, \eqref{a6} and \eqref{a6'} in Lemma \ref{lem7}. This completes our proof of Theorem \ref{thm6}.
\end{proof}

From Theorem \ref{thm6}, we know that \eqref{PDE} admits no positive entire solution $u\in C^{6}(\mathbb{R}^{n})$ satisfying ``$u=o(|x|^{4})$ at $\infty$" or sub poly-harmonic property provided that $0<q<2$ or $n=2,3$. Therefore, in the rest of this section, we only need to consider the cases $q\gee 2$ and $n\gee 4$ when discussing the properties of $C^{6}$ positive entire solution $u$ to \eqref{PDE} satisfying ``$u=o(|x|^{4})$ at $\infty$" or sub poly-harmonic property.

\medskip

We can deduce from Lemma \ref{lem7} and Theorem \ref{thm6} the following Corollary immediately.
\begin{cor}\label{cor0}
Assume $m=3$, $n\gee 4$ and $q\gee 2$. Then, we have \\
(i) \, Equation \eqref{PDE} admits no positive entire solution $u\in C^{6}(\mathbb{R}^{n})$ satisfying $u=o\left(|x|^{\frac{4}{q}}\right)$ as $|x|\rightarrow+\infty$. Moreover, \eqref{PDE} admits no positive radially symmetric entire solution $u\in C^{6}(\mathbb{R}^{n})$ satisfying $u=o(r^{4})$ at $\infty$ and
\begin{equation}\label{e8-3a}
  \liminf_{r\rightarrow+\infty}\frac{u(r)}{r^{\frac{4}{q}}}=0.
\end{equation}
(ii) \, For $n=5$, suppose $u$ is a positive radially symmetric $C^{6}$ entire solution to \eqref{PDE} satisfying $u=o(|x|^{4})$ at $\infty$, then $u(r)\gee cr$ for some positive constant $c>0$ and any $r:=|x|\gee 0$. That is, $\liminf\limits_{|x|\rightarrow+\infty}\frac{u(x)}{|x|}\in(0,+\infty]$. \\
(iii) \, If $u$ has at most linear growth (uniformly) at $\infty$, that is,
\begin{equation}\label{e9a}
  \lim_{|x|\rightarrow+\infty}\frac{u(x)}{|x|}=\alpha\in[0,+\infty),
\end{equation}
then $q\gee 4$. In addition, we have $\alpha>0$ if $n=5$. \\
(iv) \, Suppose $u\in C^{6}(\mathbb{R}^{n})$ is a positive entire solution to \eqref{PDE} satisfying $u=o(|x|^{4})$ at $\infty$, then there exists a constant $C>0$ such that
\begin{equation}\label{d5}
  \bar{v}'(r)\lee \frac{C}{r^{3}}, \qquad \forall \, r>0,
\end{equation}
and
\begin{equation}\label{d0}
  \liminf_{r\rightarrow+\infty}r^{4}\overline{u^{-q}}(r)<+\infty.
\end{equation}
\end{cor}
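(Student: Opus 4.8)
The plan is to read off all four parts directly from the lower bound $\bar u(r)\geq Cr^{4/q}$ of Theorem \ref{thm6}, the linear lower bound \eqref{a8} of Lemma \ref{lem7} (valid for $n=5$), the quadratic upper bound \eqref{7-3a}, and the volume estimate \eqref{a11}, combined with one elementary observation: on $\partial B_r(0)$ the weight $|x|^{\beta}$ equals the constant $r^{\beta}$, so $u(x)=o(|x|^{\beta})$ forces $\bar u(r)=o(r^{\beta})$, and $u(x)/|x|^{\beta}\to L$ forces $\bar u(r)/r^{\beta}\to L$. For $(i)$: since $q\geq2$ we have $\tfrac4q\leq2<4$, so $u=o(|x|^{4/q})$ already gives $u=o(|x|^{4})$, whence Theorem \ref{thm6} applies and $\bar u(r)\geq Cr^{4/q}$ for $r$ large; but the hypothesis gives $\bar u(r)=o(r^{4/q})$, a contradiction. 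If instead $u$ is radial with $u=o(r^{4})$, then $u(r)=\bar u(r)\geq Cr^{4/q}$ for all $r\geq0$ by \eqref{a9}, so $\liminf_{r\to+\infty}u(r)/r^{4/q}\geq C>0$, contradicting \eqref{e8-3a}.

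For $(ii)$, take $n=5$ and $u$ radial with $u=o(r^{4})$. Then \eqref{a8} gives $u(r)=\bar u(r)\geq\tfrac c{10}r$ for $r\geq1$; since $u>0$ is continuous it is bounded below by some $m>0$ on $[0,1]$, so $u(r)\geq\min\{m,\tfrac c{10}\}\,r$ for every $r\geq0$, and hence $\liminf_{|x|\to+\infty}u(x)/|x|\geq\tfrac c{10}>0$ (the value can be $+\infty$, consistently with the quadratic upper bound \eqref{7-3a}). For $(iii)$, the hypothesis \eqref{e9a} gives $u=O(|x|)$, so $u=o(|x|^{4})$ and Theorem \ref{thm6} yields $\bar u(r)\geq Cr^{4/q}$; on the other hand \eqref{e9a} forces $\bar u(r)/r\to\alpha$, hence $\bar u(r)\leq(\alpha+1)r$ for $r$ large. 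Comparing the two bounds gives $Cr^{4/q-1}\leq\alpha+1$ for $r$ large, which is possible only if $\tfrac4q\leq1$, i.e.\ $q\geq4$. If moreover $n=5$ and $\alpha=0$, then $\bar u(r)=o(r)$, contradicting \eqref{a8}; hence $\alpha>0$ in that case.

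For $(iv)$, the divergence theorem together with the equation $\Delta v=\Delta^{3}u=u^{-q}$ gives $\Sigma_{n-1}r^{n-1}\bar v'(r)=\int_{B_r(0)}\Delta v\,dx=\int_{B_r(0)}u^{-q}\,dx$, which by \eqref{a11} is $\leq Cr^{n-4}$; this is exactly \eqref{d5}. Writing the same quantity as $r^{n-1}\bar v'(r)=\int_0^{r}t^{n-1}\overline{u^{-q}}(t)\,dt$, we obtain $\int_0^{r}t^{n-1}\overline{u^{-q}}(t)\,dt\leq Cr^{n-4}$ for all $r>0$. Suppose, for contradiction, that $\liminf_{r\to+\infty}r^{4}\overline{u^{-q}}(r)=+\infty$; then for every $M>0$ there is $R$ with $\overline{u^{-q}}(t)\geq Mt^{-4}$ for $t\geq R$. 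If $n\geq5$, integrating over $[r,2r]$ with $r\geq R$ gives $\int_0^{2r}t^{n-1}\overline{u^{-q}}(t)\,dt\geq M\int_r^{2r}t^{n-5}\,dt=c_nMr^{n-4}$ with $c_n>0$, contradicting the upper bound once $M$ is large; if $n=4$, estimate \eqref{a12} yields $\int_0^{+\infty}t^{3}\overline{u^{-q}}(t)\,dt<+\infty$, which is incompatible with $\overline{u^{-q}}(t)\geq Mt^{-4}$ near infinity. This proves \eqref{d0}. The whole argument is bookkeeping resting on Theorem \ref{thm6} and Lemma \ref{lem7}; the only points requiring any care are the elementary transfer of pointwise asymptotics to $\bar u$, the case split $n=4$ versus $n\geq5$ in the proof of \eqref{d0}, and keeping track that the constants $c$ in \eqref{a7}--\eqref{a8} are strictly positive, which is where the sub poly-harmonic property $\Delta^{2}u<0$, $\Delta u>0$ (Lemma \ref{lem5}) is used.
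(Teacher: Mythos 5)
Your proposal is correct and follows essentially the same route as the paper: parts (i)--(iii) are exactly the "immediate" deductions from Theorem \ref{thm6} and Lemma \ref{lem7} that the paper omits, and for (iv) your identity $\Sigma_{n-1}r^{n-1}\bar v'(r)=\int_{B_r(0)}u^{-q}\,dx$ together with \eqref{a11} is precisely the paper's proof of \eqref{d5}, while your contradiction argument for \eqref{d0} is logically the same as the paper's observation that $\min_{t\in[r,2r]}t^{4}\overline{u^{-q}}(t)\lee C$. The only cosmetic difference is that for $n=4$ the paper stays with the unified estimate (using $c_4=\ln 2$ in place of $\frac{2^{n-4}-1}{n-4}$) rather than invoking \eqref{a12}, but both close the case correctly.
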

\begin{proof}
Conclusions (i)-(iii) can be deduced from Lemma \ref{lem7} and Theorem \ref{thm6} immediately, we omit the details. We only show (iv). From \eqref{a11} in Theorem \ref{thm6}, we obtain
\begin{equation}\label{d6}
  \bar{v}'(r)=\frac{1}{r^{n-1}}\int_{0}^{r}t^{n-1}\overline{u^{-q}}(t)dt=\frac{\int_{B_{r}(0)}u^{-q}(x)dx}{\Sigma_{n-1}r^{n-1}}\lee \frac{Cr^{n-4}}{\Sigma_{n-1}r^{n-1}}=:\frac{C}{r^{3}}, \qquad \forall \,r>0.
\end{equation}
Estimate \eqref{a11} in Theorem \ref{thm6} also implies that, for any $r>0$,
\begin{equation}\label{d1}
  c_{n}\Sigma_{n-1}r^{n-4}\min_{t\in[r,2r]}t^{4}\overline{u^{-q}}(t)\lee \int_{r}^{2r}\Sigma_{n-1}t^{n-1}\overline{u^{-q}}(t)dt\lee \int_{B_{2r}(0)}u^{-q}(x)dx\lee Cr^{n-4},
\end{equation}
where $c_{n}:=\frac{2^{n-4}-1}{n-4}$ for $n\neq4$ and $c_{4}:=\ln 2$ for $n=4$. That means,
\begin{equation}\label{d3}
  \min_{t\in[r,2r]}t^{4}\overline{u^{-q}}(t)\lee C, \qquad \forall \, r\gee 0.
\end{equation}
Suppose on the contrary that \eqref{d0} does not hold, then
\begin{equation}\label{d2}
  \lim_{r\rightarrow+\infty}r^{4}\overline{u^{-q}}(r)=+\infty,
\end{equation}
which contradicts \eqref{d3} if $r$ is sufficiently large. This finishes our proof of Corollary \ref{cor0}.
\end{proof}

\begin{thm}\label{thm7}
Assume $n=4$, $m=3$ and $q\gee 2$. Equation \eqref{PDE} admits no positive solutions $u\in C^{6}(\mathbb{R}^{4})$ on entire $\mathbb{R}^{4}$ satisfying $u=o(|x|^{4})$ as $|x|\rightarrow+\infty$.
\end{thm}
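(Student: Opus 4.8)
The plan is to rule out the dimension $n=4$ by sharpening the integral estimate $\int_{B_r}u^{-q}\,dx \lee Cr^{n-4}=C$ (which for $n=4$ already shows $u^{-q}\in L^1(\mathbb{R}^4)$, see \eqref{a12}) into a genuine contradiction, exploiting the logarithmic nature of the biharmonic-type kernel in dimension $4$. The starting point is the sub poly-harmonic property from Lemma \ref{lem5}: $w:=\Delta u>0$ and $v:=\Delta^2 u<0$ on $\mathbb{R}^4$, together with the monotonicity $\bar v'(r)>0$, $\bar u'(r)>0$, $\bar w'(r)<0$ from Lemma \ref{lem6}, and the bounds from Lemma \ref{lem7} and Theorem \ref{thm6}. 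In particular $\bar w$ is decreasing and bounded above by $w(0)$, so $\ell:=\lim_{r\to\infty}\bar w(r)$ exists in $[-\infty, w(0)]$; since $\bar w(r)=\Delta\bar u(r)$ and $\bar u$ is positive and $o(r^4)$ with $\bar u(r)\gee Cr^{4/q}$, the quadratic-growth ceiling \eqref{7-3a} forces $\ell\in(-\infty, w(0)]$, and in fact one expects $\ell>0$ to fail and hence to extract information. The key identity is the one-dimensional representation of $\bar w$ in terms of $v=\Delta^2 u$: from $\Delta\bar w=\bar v$ and $(r^{n-1}\bar v')'=r^{n-1}\overline{u^{-q}}\gee 0$ one integrates twice in dimension $n=4$, where $\int t^{-3}\,dt$ produces the critical $\log$, to obtain for all large $r$
\begin{equation}\label{plan1}
\bar w(r)=\bar w(1)+\frac{1}{2}\int_{1}^{r}\Big(\rho-\frac{1}{t}\Big)t\,\bar v(t)\,dt\quad\text{schematically,}
\end{equation}
and more usefully an expression of $\ell - \bar w(r)$ as a positive integral built from $\overline{u^{-q}}$ with a kernel comparable to $\log(\cdot/r)$.

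Next I would set up the contradiction. From $\int_{\mathbb{R}^4}u^{-q}<+\infty$ and $\bar u(r)\gee Cr^{4/q}$ we get a two-sided squeeze: the tail $\int_r^\infty t^{3}\overline{u^{-q}}(t)\,dt\to 0$, while the representation formula shows that $\bar w(r)$ differs from its limit $\ell$ by a quantity controlled by this tail times a logarithmic factor, namely $|\ell-\bar w(r)|\lee C\int_r^\infty t^3\overline{u^{-q}}(t)\log(t/r)\,dt$ plus a term $\log r\cdot\int_0^r t^3\overline{u^{-q}}$. The finiteness \eqref{a12} kills the first; for the second, $\log r\cdot\int_0^r t^3\overline{u^{-q}}\,dt$ need not be small, and this is exactly where the logarithmic divergence of the $4$-dimensional fundamental solution bites. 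I would instead argue that if $\ell\lee 0$ then $\bar w(r)<0$ eventually, contradicting $w=\Delta u>0$; so $\ell>0$, whence $\bar u(r)$ grows at least like $\tfrac{\ell}{8}r^2$, i.e. $u$ has at least quadratic growth in the averaged sense. Combined with the upper bound $\bar u(r)\lee u(0)+\tfrac{w(0)}{4}r^2$ from \eqref{7-3a}, $\bar u(r)\sim c r^2$. Then $\overline{u^{-q}}(r)\lee (\bar u(r))^{-q}\lee Cr^{-2q}$ by Lemma \ref{lem0}, so $\int_{\mathbb R^4}u^{-q}\asymp\int^\infty t^{3-2q}\,dt$, which converges iff $q>2$; the borderline $q=2$ gives logarithmic divergence of $\int_{B_r}u^{-q}$, directly contradicting \eqref{a11}. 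For $q>2$ I would push harder: with $\bar u(r)\gee cr^2$ the representation \eqref{plan1}-type formula for $\bar v$ shows $\bar v(r)= \Delta\bar w(r)$ is $O(r^{-2})$ from above and, integrating $(t^3\bar v'(t))'=t^3\overline{u^{-q}}(t)$ with $\overline{u^{-q}}\lee Cr^{-2q}$, that $\bar v'(r)\lee Cr^{-3}$ (this is \eqref{d5}) and $\bar v(r)\to 0^-$; then $\bar w(r)=\ell+O(r^{-2}\log r)\cdot(\text{tail})$ and one must still derive a contradiction from $\Delta u>0$ together with $\Delta^2 u<0$ in the specific dimension $4$.

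The cleanest route, which I would adopt as the main line, is a Pohozaev/test-function argument tailored to $n=4$: multiply $\Delta^3 u = u^{-q}$ by a cutoff $\phi_R(x)=\phi(x/R)$ and integrate by parts six times, using $u^{-q}\in L^1(\mathbb R^4)$ and the pointwise decay estimates $v(x)\lee -c|x|^{-2}$ (the $n=4$ case of \eqref{a6}, with $\log$ refinements available from \eqref{a6'}-type reasoning adapted to $n=4$) to show that the boundary terms involving $\Delta^2 u,\ \nabla\Delta^2 u$ over $\partial B_R$ do not vanish as $R\to\infty$ but instead contribute a fixed negative amount, while $\int u^{-q}\phi_R\to\int u^{-q}>0$; the resulting sign clash gives the contradiction. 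The hard part will be precisely the dimension-critical bookkeeping in this integration by parts — in $\mathbb R^4$ the fundamental solution of $\Delta^2$ is $c|x|^{-0}\log|x|$-type behavior is absent but that of $\Delta$ is $|x|^{-2}$, so $v(x)\asymp -|x|^{-2}$ and $w(x)=\Delta u$ can grow like $\log|x|$ or stay bounded, and the surface integrals $\int_{\partial B_R}|\nabla^{(j)}\Delta^i u|$ sit exactly at the borderline between vanishing and diverging. Controlling these borderline surface terms — equivalently, proving that $\ell=\lim\bar w(r)$ cannot be both positive (forced by $\Delta u>0$) and finite (forced by the $o(|x|^4)$ growth and \eqref{7-3a}) in a way compatible with $u^{-q}\in L^1$ — is where essentially all the work lies, and I expect it to require combining the sharp lower bound \eqref{a9}, the tail estimate \eqref{d0}, and the decay \eqref{a6} in a quantitative ladder argument rather than a single clean inequality.
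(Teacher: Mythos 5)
Your proposal does not close the argument, although you have all the raw ingredients in hand. The decisive step — which the paper executes in a few lines and which you circle around without performing — is the following: the $n=4$ case of \eqref{a6} gives $v(x)\lee -c|x|^{-2}$ for $|x|\gee 1$, hence $\bar v(r)\lee -c r^{-2}$; feeding this into $\left(r^{3}\bar w'(r)\right)'=r^{3}\bar v(r)\lee -cr$ and integrating twice (using $\bar w'<0$ from Lemma \ref{lem6} to discard the lower limit) yields $\bar w(r)\lee \bar w(2)-C\ln r\rightarrow-\infty$, flatly contradicting $w=\Delta u>0$ from Lemma \ref{lem5}. You mention precisely this bound $v(x)\lee -c|x|^{-2}$ and you observe that ``the logarithmic divergence of the $4$-dimensional fundamental solution bites,'' but you never carry out the two integrations that convert the $r^{-2}$ decay of $\bar v$ into the $-\log r$ divergence of $\bar w$; instead you defer to a Pohozaev/cutoff argument whose boundary-term bookkeeping you explicitly leave open (``this is where essentially all the work lies''). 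As written, that main line is a plan, not a proof.

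Your first route also contains a genuine error. From $w=\Delta u>0$ and $\bar w$ decreasing you may conclude only $\ell:=\lim_{r\to\infty}\bar w(r)\gee 0$, not $\ell>0$: the case $\ell=0$ is perfectly consistent with $\bar w>0$ everywhere, so the claim ``if $\ell\lee 0$ then $\bar w(r)<0$ eventually'' fails exactly in the case you cannot afford to skip. Consequently the chain $\ell>0\Rightarrow\bar u\gee \frac{\ell}{8}r^{2}\Rightarrow\overline{u^{-q}}\lee Cr^{-2q}$ is not available, and even granting it you resolve only the borderline $q=2$ and leave $q>2$ with ``I would push harder.'' The correct mechanism is not a soft dichotomy on the sign of $\ell$ but the quantitative fact that $\bar v$ cannot decay faster than $r^{-2}$ from above in dimension $4$ (maximum principle against the fundamental solution $|x|^{-2}$), which forces $\bar w$ to diverge to $-\infty$ logarithmically. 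I recommend replacing both of your routes by this direct ODE comparison.
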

\begin{proof}
Suppose on the contrary that $u\in C^{6}(\mathbb{R}^{4})$ is a positive entire solution to the tri-harmonic equation \eqref{PDE} satisfying $u=o(|x|^{4})$ as $|x|\rightarrow+\infty$. We will obtain a contradiction.

The lower bound estimate \eqref{a5} and the upper bound estimate \eqref{a6} in Lemma \ref{lem7} imply that for all $r\gee 1$,
\begin{equation}\label{a13}
  -\frac{8\Delta u(0)}{r^{2}}\lee \bar{v}(r)\lee -\frac{c}{r^{2}},
\end{equation}
where $c=-\max\limits_{|x|=1}v(x)>0$, and hence
\begin{equation}\label{42-3}
-8\Delta u(0)r\lee (r^{3}\bar{w}')'(r)\lee -cr, \qquad \forall \, r\gee 1.
\end{equation}
Integrating from $\frac{r}{2}$ to $r$, we get
\begin{equation}\label{43-3}
-\frac{3}{4}cr^{2}\gee r^{3}\bar{w}'(r)-\left(\frac{r}{2}\right)^{3}\bar{w}'\left(\frac{r}{2}\right)\gee -3\Delta u(0)r^{2}, \qquad \forall
\,r\gee 2.
\end{equation}
By Lemma \ref{lem6}, it follows that
\begin{equation}\label{44-3}
 r^{3}\bar{w}'(r)\lee -\frac{3}{4}cr^{2}, \qquad \forall \,r\gee 2.
\end{equation}
Divide by $r^{3}$ and Integrate from $2$ to $r$, we get
\begin{equation}\label{44-3}
\bar{w}(r)\lee -\frac{3}{4}c\left[\ln r-\ln 2\right]+\bar{w}(2),
\end{equation}
which implies that $\bar{w}(r)\rightarrow-\infty$ as $r\rightarrow+\infty$. This contradicts the fact $w=\Delta u>0$ in Lemma \ref{lem5}. This finishes our proof of Theorem \ref{thm7}.
\end{proof}

\begin{thm}\label{thm8}
Assume $m=3$, $n\gee2$ and $q>0$, equation \eqref{PDE} admits no positive entire solution $u\in C^{6}(\mathbb{R}^{n})$ satisfying $u=o\left(|x|^{\frac{6}{q+1}}\right)$ as $|x|\rightarrow+\infty$. Moreover, \eqref{PDE} admits no positive radially symmetric entire solution $u\in C^{6}(\mathbb{R}^{n})$ satisfying
\begin{equation}\label{a28}
  \limsup_{r\rightarrow+\infty}\frac{u(r)}{r^{\frac{6}{q+1}}}<+\infty \qquad \text{and} \qquad \liminf_{r\rightarrow+\infty}\frac{u(r)}{r^{\frac{6}{q+1}}}=0.
\end{equation}
\end{thm}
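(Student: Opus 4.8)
The plan is a proof by contradiction showing that the spherical average $\bar u(r)$ of any such solution must grow at least like $r^{\frac{6}{q+1}}$, the scaling-critical exponent for $\Delta^{3}u=u^{-q}$ (the $\beta$ for which $r^{\beta-6}$ and $r^{-q\beta}$ balance). The whole argument is a careful threefold integration of the radial identities $\Delta\bar v(r)=\overline{u^{-q}}(r)$, $\Delta\bar w(r)=\bar v(r)$, $\Delta\bar u(r)=\bar w(r)$ of \eqref{6-3}, with bookkeeping of exponents. The one common input from the hypotheses is a lower bound $\overline{u^{-q}}(r)\gee c_{0}r^{-\frac{6q}{q+1}}$ for all large $r$: for the first statement it comes from $u(x)\lee\varepsilon|x|^{\frac{6}{q+1}}$ on $\partial B_{r}$, and for the second statement from $\limsup_{r\to\infty}u(r)/r^{\frac{6}{q+1}}<+\infty$ (with $u$ radial, so $\overline{u^{-q}}=u^{-q}$). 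Feeding this into $\bar v'(r)=r^{1-n}\int_{0}^{r}t^{n-1}\overline{u^{-q}}(t)\,dt$ (cf. \eqref{d6}) and using $n>\frac{6q}{q+1}$ --- which holds automatically if $q<\tfrac12$ (then $\frac{6q}{q+1}<2\lee n$), if $n\gee6$, or if $n=5$ and $q<5$ --- gives $\bar v'(r)\gee c_{1}r^{\,1-\frac{6q}{q+1}}$ for $r$ large. The leftover case $n=5$, $q\gee5$ I would dispose of at once: there $u=o(|x|^{4})$, Lemma \ref{lem5} applies, and \eqref{a8} in Lemma \ref{lem7} gives $\bar u(r)\gee cr\gee cr^{\frac{6}{q+1}}$ for $r\gee1$ (since $\frac{6}{q+1}\lee1$), contradicting the hypothesis.

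When $q<\tfrac12$ --- the regime $\frac{6}{q+1}>4$, in which Lemma \ref{lem5} is not available --- the exponent $1-\frac{6q}{q+1}$ is $>-1$, so integrating $\bar v'$ yields $\bar v(r)\gee c\,r^{\gamma}$ with $\gamma:=2-\frac{6q}{q+1}\in(0,2)$; in particular $\bar v\to+\infty$. I would then integrate $\bar w'(r)=r^{1-n}\int_{0}^{r}t^{n-1}\bar v(t)\,dt$ (the contribution of $t$ near $0$ being a bounded constant of lower order, and $\bar v$ being eventually positive) to get $\bar w(r)\gee c\,r^{2+\gamma}$, and one more such integration of $\Delta\bar u=\bar w$ to get $\bar u(r)\gee c\,r^{4+\gamma}$. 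Since $4+\gamma=\frac{6}{q+1}$, this contradicts $u=o(|x|^{\frac{6}{q+1}})$ (resp. $\liminf_{r\to\infty}u(r)/r^{\frac{6}{q+1}}=0$). The borderline $q=\tfrac12$ runs the same way with the powers $r^{\gamma}$ replaced by logarithmic factors; in the first statement it is anyway vacuous by Theorem \ref{thm6}, since $u=o(|x|^{4})$ would force $q\gee2$.

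When $q\gee\tfrac12$ the hypothesis forces $u=o(|x|^{4})$, so Lemma \ref{lem5} gives the sub poly-harmonic property $\Delta^{2}u<0$, $\Delta u>0$, and Theorems \ref{thm6} and \ref{thm7} force $q\gee2$, $n\gee5$; the value $q=2$ is covered by Corollary \ref{cor0}(i), so assume $q>2$. Now $\bar v<0$, $\bar w>0$, and by \eqref{a5} in Lemma \ref{lem7} $\bar v$ increases to $0$, hence $-\bar v(r)=\int_{r}^{\infty}\bar v'(t)\,dt\gee c\,r^{\gamma}$ with $\gamma=2-\frac{6q}{q+1}\in(-4,-2)$ (the integral converges since $q>\tfrac12$). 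Integrating $\Delta\bar w=\bar v$ gives $\bar w'(r)\lee -c\,r^{1+\gamma}$ for $r$ large; since $1+\gamma<-1$ and $\bar w$ is positive and decreasing, integrating from $r$ to $\infty$ gives $\bar w(r)\gee c\,r^{2+\gamma}$; a final integration of $\Delta\bar u=\bar w$ gives $\bar u(r)\gee c\,r^{4+\gamma}=c\,r^{\frac{6}{q+1}}$, again contradicting the hypothesis. For radial $u$ in the second statement this argument is identical with $\bar u=u$.

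The main obstacle is precisely the regime $q<\tfrac12$: there the sub poly-harmonic property --- the mechanism behind \eqref{7-3a}, \eqref{a5}, Theorem \ref{thm6} and Corollary \ref{cor0} --- is unavailable, so the threefold integration must be run unconditionally, and at each stage one must verify that the lower-order terms (constants from integrating near the origin, and in the $q\gee\tfrac12$ branch the limiting values of $\bar v$ and $\bar w$ at infinity) do not spoil the growth rates $r^{\gamma}$, $r^{2+\gamma}$, $r^{4+\gamma}$. The small edge cases $q=\tfrac12$ and $n=5$, $q\gee5$ each need their own short arguments as indicated.
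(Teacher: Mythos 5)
Your proposal is correct and rests on the same core mechanism as the paper's own proof: three successive integrations of the averaged system $\Delta\bar v=\overline{u^{-q}}$, $\Delta\bar w=\bar v$, $\Delta\bar u=\bar w$, seeded by the lower bound $\overline{u^{-q}}(r)\gee c\,r^{-\frac{6q}{q+1}}$, to force $\bar u(r)\gee c\,r^{\frac{6}{q+1}}$ and contradict the hypothesis. The execution differs in two respects. The paper integrates each equation over dyadic intervals $[r,2r]$ and discards one endpoint using the signs from Lemma \ref{lem6}, which lets it avoid any discussion of whether $n\gtrless\frac{6q}{q+1}$; in exchange, its inequalities \eqref{a19}--\eqref{a22} genuinely use $\bar v\lee 0$, $\bar w'\lee 0$, $\bar w\gee 0$, $\bar u'\gee 0$, i.e.\ the sub poly-harmonic property, which under the hypothesis $u=o\big(|x|^{\frac{6}{q+1}}\big)$ is only guaranteed by Lemma \ref{lem5} when $q\gee\frac12$ (so that $\frac{6}{q+1}\lee 4$). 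Your case split --- running the integrations unconditionally for $q<\frac12$, where $\bar v,\bar w,\bar u$ all blow up to $+\infty$ with the rates $r^{\gamma}$, $r^{2+\gamma}$, $r^{4+\gamma}$, and invoking Lemma \ref{lem5} together with Theorems \ref{thm6} and \ref{thm7} only for $q\gee\frac12$ --- is therefore not redundant bookkeeping: it supplies exactly the justification the paper's argument leaves implicit in the regime $0<q<\frac12$, at the cost of the extra side cases ($q=\frac12$, and $n=5$, $q\gee 5$, where $n>\frac{6q}{q+1}$ fails) that you correctly identify and dispose of. All the individual steps you outline (the convergence of $\int_r^{\infty}\bar v'$, the signs of the leftover constants of integration, the identity $4+\gamma=\frac{6}{q+1}$) check out.
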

\begin{proof}
Suppose on the contrary that $u\in C^{6}(\mathbb{R}^{n})$ is a positive entire solution to \eqref{PDE} satisfying $u=o\left(|x|^{\frac{6}{q+1}}\right)$ at $\infty$. Since $u=o\left(|x|^{\frac{6}{q+1}}\right)$ as $|x|\rightarrow+\infty$, we have
\begin{equation}\label{a16}
  \bar{u}(r)\lee r^{\frac{6}{q+1}}, \qquad \text{for} \,\, r \,\, \text{large enough}.
\end{equation}
By Lemma \ref{lem0}, we infer from \eqref{a16} that
\begin{equation}\label{a17}
  \Delta^{3}\bar{u}(r)=\frac{1}{r^{n-1}}\left(r^{n-1}\bar{v}'(r)\right)'(r)\gee \bar{u}^{-q}(r)\gee r^{-\frac{6q}{q+1}},  \qquad \text{for} \,\, r \,\, \text{large enough}.
\end{equation}
By Lemma \ref{lem6} and integrating from $r$ to $2r$, we get
\begin{equation}\label{a18}
  (2r)^{n-1}\bar{v}'(2r)\gee (2r)^{n-1}\bar{v}'(2r)-r^{n-1}\bar{v}'(r)\gee \int_{r}^{2r}s^{n-1-\frac{6q}{q+1}}ds=Cr^{n-\frac{6q}{q+1}}
\end{equation}
for $r$ sufficiently large, and hence
\begin{equation}\label{a19}
   -\bar{v}(r)\gee \bar{v}(2r)-\bar{v}(r)\gee C\int_{r}^{2r}s^{1-\frac{6q}{q+1}}ds=Cr^{2-\frac{6q}{q+1}} \qquad \text{for} \,\, r \,\, \text{large enough}.
\end{equation}
By Lemma \ref{lem6} and integrating from $r$ to $2r$ again, we obtain
\begin{equation}\label{a20}
  (2r)^{n-1}\bar{w}'(2r)\lee (2r)^{n-1}\bar{w}'(2r)-r^{n-1}\bar{w}'(r)\lee -C\int_{r}^{2r}s^{n+1-\frac{6q}{q+1}}ds=-Cr^{n+2-\frac{6q}{q+1}}
\end{equation}
for $r$ sufficiently large, and hence
\begin{equation}\label{a21}
   -\bar{w}(r)\lee \bar{w}(2r)-\bar{w}(r)\lee -C\int_{r}^{2r}s^{3-\frac{6q}{q+1}}ds=-Cr^{4-\frac{6q}{q+1}} \qquad \text{for} \,\, r \,\, \text{large enough}.
\end{equation}
By Lemma \ref{lem6} and integrating from $r$ to $2r$ again, we arrive at
\begin{equation}\label{a22}
  (2r)^{n-1}\bar{u}'(2r)\gee (2r)^{n-1}\bar{u}'(2r)-r^{n-1}\bar{u}'(r)\gee C\int_{r}^{2r}s^{n+3-\frac{6q}{q+1}}ds=Cr^{n+4-\frac{6q}{q+1}}
\end{equation}
for $r$ sufficiently large, and hence
\begin{equation}\label{a23}
   \bar{u}(2r)\gee \bar{u}(2r)-\bar{u}(r)\gee C\int_{r}^{2r}s^{5-\frac{6q}{q+1}}ds=Cr^{6-\frac{6q}{q+1}} \qquad \text{for} \,\, r \,\, \text{large enough}.
\end{equation}
By \eqref{a23}, we must have $6-\frac{6q}{q+1}=\frac{6}{q+1}<\frac{6}{q+1}$ as $u=o\left(|x|^{\frac{6}{q+1}}\right)$ at $\infty$, which is absurd. This finishes our proof of Theorem \ref{thm8}.
\end{proof}
\begin{rem}\label{rem6}
By Theorem \ref{thm8}, the conclusions in (iii) of Corollary \ref{cor0} can be improved to the following: \\
$(iii')$ \, For any $n\gee 2$ and $q>0$, if $u$ has at most linear growth (uniformly) at $\infty$, that is,
\begin{equation}\label{e9-3a}
  \lim_{|x|\rightarrow+\infty}\frac{u(x)}{|x|}=\alpha\in[0,+\infty),
\end{equation}
then $q\gee 5$.
\end{rem}

\begin{lem}\label{lem8}
Assume $n\gee 5$, $m=3$ and $q>5$. Suppose $u\in C^{6}(\mathbb{R}^{n})$ is a positive radially symmetric entire solution to \eqref{PDE} such that $\liminf\limits_{|x|\rightarrow+\infty}\frac{u(x)}{|x|}\in(0,+\infty]$ and $\lim\limits_{r\rightarrow+\infty}u''(r)=\rho\in\mathbb{R}$, then $\rho\gee 0$. If $\rho>0$, then $u$ has exactly quadratic growth at $\infty$, that is,
\begin{equation}\label{e1-3}
  \lim _{|x| \rightarrow+\infty}\frac{u(x)}{|x|^{2}}=\frac{\rho}{2}\in(0,+\infty).
\end{equation}
If $\rho=0$, then $u$ has exactly linear growth at $\infty$, that is,
\begin{equation}\label{a30}
  \lim _{|x| \rightarrow+\infty}\frac{u(x)}{|x|}=-\frac{1}{n-1}\int_{0}^{+\infty}t^{2}\left(\Delta^{2}\bar{u}(t)-\frac{n-3}{t}\left(\Delta\bar{u}\right)'(t)\right)dt\in(0,+\infty).
\end{equation}
\end{lem}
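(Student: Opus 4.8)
The plan is to split the statement into (a) the sign of $\rho$ together with the quadratic-growth case, which is soft, and (b) the linear-growth formula, which I reduce to an identity for the radial profile and a careful asymptotic analysis.

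\textbf{The easy part.} Since $\lim_{r\to\infty}u''(r)=\rho$ exists, integrating twice gives $u'(r)=O(r)$ and $u(r)=O(r^{2})$, so in particular $u=o(|x|^{4})$; hence Lemmas~\ref{lem5} and~\ref{lem6} apply and, $u$ being radial, yield $w:=\Delta u>0$, $v:=\Delta^{2}u<0$, $u'>0$, $w'<0$, $v'>0$ on $(0,\infty)$. Two applications of L'Hôpital's rule — legitimate precisely because $\lim u''$ exists — give $\lim_{r\to\infty}u'(r)/r=\rho$ and $\lim_{r\to\infty}u(r)/r^{2}=\rho/2$; positivity of $u$ forces $\rho\gee 0$, and if $\rho>0$ we are done with \eqref{e1-3}. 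From now on $\rho=0$, so $u=o(r^{2})$, while $\liminf_{r\to\infty}u(r)/r>0$ produces $c_{0}>0$ and $R_{0}$ with $u(r)\gee c_{0}r$, hence $u^{-q}(r)\lee c_{0}^{-q}r^{-q}$, for $r\gee R_{0}$.

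\textbf{Reducing \eqref{a30} to an identity.} For radial $u$ we have $v=w''+\tfrac{n-1}{r}w'$, so
\begin{equation*}
 v(r)-\tfrac{n-3}{r}w'(r)=w''(r)+\tfrac{2}{r}w'(r)=\tfrac{1}{r^{2}}\bigl(r^{2}w'(r)\bigr)',
\end{equation*}
and integrating from $0$ to $r$ (with $r^{2}w'(r)\to 0$ as $r\to 0$) gives $\int_{0}^{r}t^{2}\bigl(\Delta^{2}\bar u(t)-\tfrac{n-3}{t}(\Delta\bar u)'(t)\bigr)\,dt=r^{2}w'(r)$ for all $r>0$. Thus the improper integral in \eqref{a30} converges if and only if $L:=\lim_{r\to\infty}r^{2}w'(r)$ exists, and then equals it; what remains is to show this limit exists, is finite, satisfies $L<0$, and that $\lim_{r\to\infty}u(r)/r=-L/(n-1)$.

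\textbf{Producing $L$ (the crux).} First I upgrade monotonicity to decay: $v$ is increasing and negative, so $v(\infty)\lee 0$ exists; if $v(\infty)<0$ then $(r^{n-1}w')'=r^{n-1}v$ forces $w'(r)\lesssim -cr$ and $w(r)\to-\infty$, contradicting $w>0$, so $v(\infty)=0$; likewise $w(r)=u''(r)+\tfrac{n-1}{r}u'(r)\to 0$. Writing $J(r):=\int_{0}^{r}t^{n-1}u^{-q}(t)\,dt$ (so $r^{n-1}v'(r)=J(r)$ and $r^{n-1}w'(r)=\int_{0}^{r}t^{n-1}v(t)\,dt$), the relation $v(\infty)=0$ gives $-v(r)=\int_{r}^{\infty}s^{1-n}J(s)\,ds$, and substituting this into $r^{2}w'(r)=r^{3-n}\int_{0}^{r}t^{n-1}v(t)\,dt$ and exchanging the order of integration yields
\begin{equation*}
 r^{2}w'(r)=-\tfrac1n\Bigl(r^{3-n}\!\int_{0}^{r}\!sJ(s)\,ds+r^{3}\!\int_{r}^{\infty}\!s^{1-n}J(s)\,ds\Bigr).
\end{equation*}
Now Theorem~\ref{thm6} supplies $J(r)\lee Cr^{n-4}$, and the bound $u^{-q}(r)\lee c_{0}^{-q}r^{-q}$ together with $q>5$ controls both tails. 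For $n=5$ the condition $q>5$ makes $J(\infty)=\tfrac1{\Sigma_{4}}\int_{\mathbb R^{5}}u^{-q}$ finite and strictly positive, both terms above converge, and one computes $L=-J(\infty)/6<0$; for $n\gee 6$ the same estimates force both terms to $0$, so $L=0$, whence (by the last step) $\lim u(r)/r=0$, contradicting the hypothesis — so for $n\gee 6$ the case $\rho=0$ does not arise and the conclusion is vacuous there. The obstacle is exactly this step: converting the soft data ($v\uparrow 0$, $w\downarrow 0$, $v'>0$, $w'<0$) into the \emph{genuine} limit $L$ rather than a $\liminf/\limsup$; the change-of-order identity above is what makes the cancellations explicit and avoids an uncontrolled chain of L'Hôpital applications.

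\textbf{From $L$ to linear growth.} Once $r^{2}w'(r)\to L$, integrating $w'$ from $r$ to $\infty$ (using $w(\infty)=0$) gives $w(r)=-L/r+o(1/r)$, so $rw(r)\to-L$; then $u'(r)=r^{1-n}\int_{0}^{r}t^{n-2}\bigl(tw(t)\bigr)\,dt\to -L/(n-1)$ by the same elementary estimate, and a final L'Hôpital step gives $\lim_{r\to\infty}u(r)/r=\lim_{r\to\infty}u'(r)=-L/(n-1)\in(0,\infty)$, which is precisely \eqref{a30}.
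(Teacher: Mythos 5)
Your proof is correct, and it takes a genuinely different route from the paper's. The paper works one level higher: it integrates the radial identity $\Delta^{2}\bar{u}=\frac{1}{r^{n+1}}\left(r^{n+1}\bar{u}^{(3)}\right)'+\frac{n-3}{r}\bar{w}'$ all the way up to $\bar{u}$, writes $\bar{u}(r)-\bar{u}(0)=\frac{\rho}{2}r^{2}+\Psi(r)$ with an explicit remainder $\Psi$, and extracts the linear coefficient by evaluating $\lim_{r\to\infty}\Psi'(r)$ through repeated L'H\^{o}pital applications; the sign $\rho\gee0$ then comes from the observation that $\rho<0$ would drive $\bar{u}$ to $-\infty$. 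You instead dispose of $\rho\gee0$ and the quadratic case at the level of $u''$ alone, and for $\rho=0$ you notice that the integral in \eqref{a30} telescopes exactly to $L:=\lim_{r\to\infty}r^{2}w'(r)$, which you then evaluate in closed form by a Fubini manipulation of the iterated integrals, using $v(\infty)=w(\infty)=0$. This buys two things the paper's argument leaves invisible: the explicit value $L=-\tfrac{1}{6}J(\infty)$ for $n=5$, which reproduces the constant $\frac{1}{64\pi^{2}}\int_{\mathbb{R}^{5}}u^{-q}\,dy$ appearing later in Theorem \ref{thm9}, and the fact that for $n\gee6$ the same formula forces $L=0$, hence $\lim_{r\to\infty}u(r)/r=0$, contradicting $\liminf u(r)/r>0$ --- so the $\rho=0$ alternative is vacuous unless $n=5$. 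Proving the conditional vacuously for $n\gee6$ is legitimate, and your contradiction there is a genuine deduction (I checked the tail estimates: with $u(r)\gee c_{0}r$ and $q>5$ both terms in your exchanged integral are $O(r^{5-n})+O(r^{5-q})$), not an error. What the paper's computation buys in exchange is a single uniform formula for all $n\gee5$ that never requires identifying the limit numerically.
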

\begin{proof}
By direct calculations, one has
\begin{equation}\label{12-3}
\bar{v}(r)={\Delta}^{2}\bar{u}(r)=\frac{1}{r^{n+1}}\left(r^{n+1}{\bar{u}}^{(3)}(r)\right)'(r)+\frac{n-3}{r}\bar{w}'(r), \qquad \forall \, r>0.
\end{equation}
By \eqref{PDE}, \eqref{12-3} and L'Hopital's rule, we have
\begin{eqnarray}\label{a26}
  && \quad \lim_{r\rightarrow+\infty}r{\bar{u}}^{(3)}(r)
  =\lim_{r\rightarrow+\infty}\frac{\int_{0}^{r}t^{n+1}\left(\bar{v}(t)-\frac{n-3}{t}\bar{w}'(t)\right)dt}{r^{n}} \\
  \nonumber && =\lim_{r\rightarrow+\infty}\frac{r^{2}}{n}\left(\bar{v}(r)-\frac{n-3}{r}\bar{w}'(r)\right)
  =-\frac{1}{2n(n-2)(n-4)}\lim_{r\rightarrow+\infty}r^{4}\overline{u^{-q}}(r)=0.
\end{eqnarray}
From \eqref{PDE}, \eqref{a26}, \eqref{12-3} and integrating, we can derive
\begin{eqnarray}\label{a25}
  && \quad \bar{u}(r)-\bar{u}(0) \\
 \nonumber &&=\frac{\rho}{2}r^{2}-\frac{1}{n(n-1)(n-2)r^{n-2}}
  \int_{0}^{r}t^{n+1}\left(\bar{v}(t)-\frac{n-3}{t}\bar{w}'(t)\right)dt \\
 \nonumber && \quad -\frac{r^{2}}{2n}\int_{r}^{+\infty}t\left(\bar{v}(t)-\frac{n-3}{t}\bar{w}'(t)\right)dt
  +\frac{1}{2(n-2)}\int_{0}^{r}t^{3}\left(\bar{v}(t)-\frac{n-3}{t}\bar{w}'(t)\right)dt \\
 \nonumber && \quad -\frac{r}{n-1}\int_{0}^{r}t^{2}\left(\bar{v}(t)-\frac{n-3}{t}\bar{w}'(t)\right)dt\\
 \nonumber && =:\frac{\rho}{2}r^{2}+\Psi(r), \qquad \forall \, r>0.
\end{eqnarray}
By L'Hopital's rule again, we get
\begin{eqnarray}\label{a27}
  && \lim_{r\rightarrow+\infty}\Psi'(r)=\lim_{r\rightarrow+\infty}\bigg[\frac{1}{n(n-1)r^{n-1}}
  \int_{0}^{r}t^{n+1}\left(\bar{v}(t)-\frac{n-3}{t}\bar{w}'(t)\right)dt \\
  \nonumber && \quad -\frac{r}{n}\int_{r}^{+\infty}t\left(\bar{v}(t)-\frac{n-3}{t}\bar{w}'(t)\right)dt
  -\frac{1}{n-1}\int_{0}^{r}t^{2}\left(\bar{v}(t)-\frac{n-3}{t}\bar{w}'(t)\right)dt\bigg] \\
  \nonumber &&=\frac{2-n}{(n-1)^{2}}\lim_{r\rightarrow+\infty}r^{3}\left(\bar{v}(r)-\frac{n-3}{r}\bar{w}'(r)\right)
  -\frac{1}{n-1}\int_{0}^{+\infty}t^{2}\left(\bar{v}(t)-\frac{n-3}{t}\bar{w}'(t)\right)dt \\
  \nonumber && =-\frac{1}{n-1}\int_{0}^{+\infty}t^{2}\left(\bar{v}(t)-\frac{n-3}{t}\bar{w}'(t)\right)dt.
\end{eqnarray}
Therefore, it follows from \eqref{a25} and \eqref{a27} that
\begin{equation}\label{a28}
  cr\lee \bar{u}(r)-\bar{u}(0)=\frac{\rho}{2}r^{2}-\left[\frac{1}{n-1}\int_{0}^{+\infty}t^{2}\left(\bar{v}(t)-\frac{n-3}{t}\bar{w}'(t)\right)dt\right]r+o(r),
\end{equation}
as $r\rightarrow+\infty$. Suppose $\rho<0$, then \eqref{a28} implies $\bar{u}(r)\rightarrow-\infty$ as $r\rightarrow+\infty$, which contradicts the fact $u>0$. Therefore, we must have $\rho\gee 0$. If $\rho>0$, then \eqref{a28} yields
\begin{equation}\label{a29}
  \lim _{|x| \rightarrow+\infty}\frac{u(x)}{|x|^{2}}=\frac{\rho}{2}\in(0,+\infty).
\end{equation}
If $\rho=0$, then \eqref{a28} yields
\begin{equation}\label{a30}
  \lim _{|x| \rightarrow+\infty}\frac{u(x)}{|x|}=-\frac{1}{n-1}\int_{0}^{+\infty}t^{2}\left(\bar{v}(t)-\frac{n-3}{t}\bar{w}'(t)\right)dt\in(0,+\infty).
\end{equation}
This completes our proof of Lemma \ref{lem8}.
\end{proof}
\begin{rem}\label{rem7}
The condition $\liminf\limits_{|x|\rightarrow+\infty}\frac{u(x)}{|x|}\in(0,+\infty]$ for radially symmetric positive entire solution $u$ in Lemma \ref{lem8} can be deduced from $u=o(|x|^{4})$ at $\infty$ when $n=5$ (see Lemma \ref{lem7}). In Lemma \ref{lem8}, we may assume ``$u''(r)>0$ and $u'''(r)<0$ for any $r>0$" instead of ``$\lim\limits_{r\rightarrow+\infty}u''(r)=\rho\in\mathbb{R}$ exists", since if ``$u''(r)>0$ and $u'''(r)<0$ for any $r>0$" holds then $\lim\limits_{r\rightarrow+\infty}u''(r)=\rho\gee 0$ exists.
\end{rem}

From now on, we will focus on the 5D case $n=5$. First, we can show that ``$\bar{u}''(r)>0$ and $\bar{u}'''(r)<0$ for any $r>0$" provided that the sub poly-harmonic property holds.
\begin{lem}\label{lem10}
Assume $n=5$, $m=3$ and $q\gee 2$. Suppose $u\in C^{6}(\mathbb{R}^{5})$ is a positive entire solution to \eqref{PDE} satisfying $u=o(|x|^{4})$ as $|x|\rightarrow+\infty$, then
\begin{equation}\label{a32}
  \bar{u}''(r)>0, \quad \bar{u}'''(r)<0, \qquad \forall \, r>0.
\end{equation}
Moreover, if assume further $q>5$ and $u$ is radially symmetric, then either $u$ has exactly quadratic growth at $\infty$ or $u$ has exactly linear growth at $\infty$.
\end{lem}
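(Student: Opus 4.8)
The plan is to dispatch the two assertions separately: the pointwise signs $\bar u''>0$, $\bar u'''<0$ form an ODE statement about the averaged third-order system \eqref{6-3}, and once they are available the growth alternative follows almost immediately from Lemma \ref{lem8}.

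For the first assertion I would keep in force the sub poly-harmonic property of Lemma \ref{lem5} together with Lemma \ref{lem6}, so that in $\mathbb{R}^{5}$ one has $\bar w>0$, $\bar w'<0$, $\bar v<0$, $\bar v'>0$ and $\bar u'>0$; set $\mu:=-\bar v=|\Delta^{2}u|>0$ (decreasing, since $\bar v'>0$) and $m:=-\bar w'>0$. Integrating the relations $(r^{4}\bar u')'=r^{4}\bar w$ and $(r^{4}\bar w')'=r^{4}\bar v$ coming from \eqref{6-3}, and using the $n=5$ case of \eqref{12-3}, namely $\bar v=r^{-6}(r^{6}\bar u^{(3)})'+\tfrac{2}{r}\bar w'$, one obtains
\[
\bar u''(r)=\bar w(r)-\frac{4}{r^{5}}\int_{0}^{r}t^{4}\bar w(t)\,dt,\qquad
\bar u'''(r)=\frac{1}{r^{6}}\int_{0}^{r}\bigl(2s^{5}m(s)-s^{6}\mu(s)\bigr)\,ds .
\]
Writing $m(s)=s^{-4}\int_{0}^{s}t^{4}\mu(t)\,dt$ and applying Fubini, the inequality $\bar u'''<0$ becomes the clean comparison $r^{2}\int_{0}^{r}t^{4}\mu(t)\,dt<2\int_{0}^{r}t^{6}\mu(t)\,dt$, and $\bar u''>0$ becomes an analogous comparison of $r^{5}\bar w(r)$ with $4\int_{0}^{r}t^{5}m(t)\,dt$. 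Near $r=0$ both hold by smoothness ($\bar u''(0)=\tfrac{1}{5}\Delta u(0)>0$, $\bar u'''(0)=0$, and the integrands carry the right sign). The substance of the proof is the regime $r\rightarrow+\infty$, where $\bar w$ may decay to $0$; there I would feed in the quantitative decay from Lemma \ref{lem7} in dimension five --- $\bar v(r)\geq -2n\Delta u(0)/r^{2}$ from \eqref{a5} and $v(x)\leq -c|x|^{-3}$ from \eqref{a6}, giving $c\,r^{-3}\leq\mu(r)\leq C\,r^{-2}$ for $r\geq 1$ --- together with $\bar v'(r)\leq C/r^{3}$ from Corollary \ref{cor0}(iv), equivalently the integral bound $\int_{B_{r}(0)}u^{-q}\leq Cr$ of Theorem \ref{thm6}. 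Concretely, letting $h(r):=2\int_{0}^{r}t^{6}\mu-r^{2}\int_{0}^{r}t^{4}\mu$ and differentiating twice, $\bar u'''<0$ reduces to the single estimate $3\mu(r)+r\mu'(r)\geq 0$, i.e.\ $3\,|\bar v(r)|\geq r\,\bar v'(r)$ for all $r>0$, which I expect to be the main obstacle and to prove by a doubling argument built on the bounds just listed; the companion estimate for $\bar u''$ is handled the same way.

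For the second assertion, assume in addition $q>5$ and that $u$ is radially symmetric, so $\bar u\equiv u$. The first assertion then gives $u''(r)>0$ and $u'''(r)<0$ for every $r>0$; in particular $u''$ is positive and strictly decreasing, hence $\rho:=\lim_{r\rightarrow+\infty}u''(r)$ exists with $\rho\geq 0$. Since $n=5$ and $u=o(|x|^{4})$, estimate \eqref{a8} of Lemma \ref{lem7} (equivalently Corollary \ref{cor0}(ii)) gives $\liminf_{|x|\rightarrow+\infty}u(x)/|x|\in(0,+\infty]$. All hypotheses of Lemma \ref{lem8} are thus met (using the reformulation in Remark \ref{rem7}), and Lemma \ref{lem8} shows that $u$ has exactly quadratic growth at $\infty$ when $\rho>0$ and exactly linear growth at $\infty$ when $\rho=0$ --- which is precisely the claimed dichotomy.
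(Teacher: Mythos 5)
Your treatment of the second assertion is correct and coincides with the paper's: monotonicity of $\bar u''$ gives the existence of $\rho=\lim_{r\to+\infty}u''(r)\gee 0$, the lower bound \eqref{a8} supplies $\liminf_{|x|\to+\infty}u(x)/|x|\in(0,+\infty]$, and Lemma \ref{lem8} yields the dichotomy. Your reductions for the first assertion are also algebraically sound, and you correctly isolate the pivotal inequality: in the notation $\mu=-\bar v$, the condition $3\mu(r)+r\mu'(r)\gee 0$, i.e.\ $3\bar v(r)+r\bar v'(r)\lee 0$ for all $r>0$.

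The genuine gap is that you never prove this inequality; you defer it to an unspecified ``doubling argument built on the bounds just listed,'' and those bounds cannot deliver it. Indeed, from $c\,r^{-3}\lee\mu(r)\lee C\,r^{-2}$ and $0<-\mu'(r)=\bar v'(r)\lee C r^{-3}$ one only gets $3\mu(r)\gee 3c\,r^{-3}$ while $r|\mu'(r)|$ may a priori be as large as $C r^{-2}$, so for large $r$ the two sides of $3\mu\gee r|\mu'|$ are not comparable from these estimates alone; no amount of comparing scales $r$ and $2r$ fixes a pointwise differential inequality that the stated bounds do not even force asymptotically. What actually closes the argument (and is the paper's mechanism) is a monotonicity-plus-limit observation: since $\Delta\bar v=\overline{u^{-q}}$ in $\mathbb{R}^{5}$, one has
\begin{equation*}
\bigl(3\bar v(r)+r\bar v'(r)\bigr)'=r\Bigl(\bar v''(r)+\tfrac{4}{r}\bar v'(r)\Bigr)=r\,\overline{u^{-q}}(r)>0,
\end{equation*}
so $3\bar v+r\bar v'$ is strictly increasing; and the decay estimates you already quoted ($\bar v(r)\to 0$ from \eqref{a5}--\eqref{a6} and $r\bar v'(r)\lee C r^{-2}\to 0$ from \eqref{d5}) show its limit at infinity is $0$. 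Hence $3\bar v(r)+r\bar v'(r)<0$ for every $r\gee 0$, which via \eqref{a33} and \eqref{12-3} gives $\bigl(r^{6}\bar u^{(3)}(r)\bigr)'<0$ and thus $\bar u'''<0$; then $\bar u''>0$ follows most simply by contradiction (if $\bar u''\lee-\eta$ eventually, integration forces $\bar u'\to-\infty$, contradicting Lemma \ref{lem6}), rather than by a second comparison of the same unproved type. Without the monotonicity identity above, the core of the first assertion remains unestablished.
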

\begin{proof}
By \eqref{PDE} and integrating by parts, we have, for any $r>0$,
\begin{eqnarray}\label{a33}
   && \quad \bar{v}(r)-\frac{2}{r}\bar{w}'(r) \\
  \nonumber &&=\bar{w}''(r)+\frac{2}{r}\bar{w}'(r)=\frac{1}{r^{2}}\left(r^{2}\bar{w}'(r)\right)'(r) \\
 \nonumber &&=\frac{1}{r^{2}}\left(\frac{1}{r^{2}}\int_{0}^{r}t^{4}\bar{v}(t)dt\right)'(r)=\bar{v}(r)-\frac{2}{r^{5}}\int_{0}^{r}t^{4}\bar{v}(t)dt \\
 \nonumber &&= \frac{3}{5}\bar{v}(r)+\frac{2}{5r^{5}}\int_{0}^{r}t^{5}\bar{v}'(t)dt \\
 \nonumber &&= \frac{1}{5}\left(3\bar{v}(r)+r\bar{v}'(r)\right)-\frac{1}{5r^{5}}\int_{0}^{r}t^{6}\overline{u^{-q}}(t)dt.
\end{eqnarray}
Since $u=o(|x|^{4})$ at $\infty$, by Lemma \ref{lem5}, we have the sub poly-harmonic property: $w>0$ and $v<0$ in $\mathbb{R}^{5}$. By \eqref{a5}, \eqref{a6} and \eqref{a8} in Lemma \ref{lem7}, we have
\begin{equation}\label{a35}
  -\frac{10\Delta u(0)}{r^{2}}\lee \bar{v}(r)\lee -\frac{c_{1}}{r^{3}}, \quad u(r)\gee \frac{c_{2}}{10}r, \qquad \forall \, r\gee 1,
\end{equation}
where $c_{1}=-\max\limits_{|x|=1}v(x)>0$ and $c_{2}=\min\left\{\frac{-1}{2(n-4)}\max\limits_{|x|=1}v(x),\min\limits_{|x|=1}w(x)\right\}>0$. Moreover, from Lemma \ref{lem6} and \eqref{d5} in Corollary \ref{cor0}, we infer that there exists a constant $C>0$ such that
\begin{equation}\label{d4}
  0<\bar{v}'(r)\lee \frac{C}{r^{3}}, \qquad \forall \, r>0.
\end{equation}
By \eqref{d4}, we have
\begin{equation}\label{a36}
  0\lee \lim_{r\rightarrow+\infty}r\bar{v}'(r)\lee \lim_{r\rightarrow+\infty}\frac{C}{r^{2}}=0.
\end{equation}
Consequently, we conclude that
\begin{equation}\label{a37}
  3\bar{v}(r)+r\bar{v}'(r)\rightarrow 0, \qquad \text{as} \,\, r\rightarrow+\infty.
\end{equation}
Note that
\begin{equation}\label{a34}
  \left(3\bar{v}(r)+r\bar{v}'(r)\right)'(r)=r\left(\bar{v}''(r)+\frac{4}{r}\bar{v}'(r)\right)=r\overline{u^{-q}}(r)>0, \qquad \forall \, r>0,
\end{equation}
combining this with \eqref{a37}, we get
\begin{equation}\label{a38}
  3\bar{v}(r)+r\bar{v}'(r)<0, \qquad \forall \, r\gee 0.
\end{equation}
Subtracting \eqref{a38} into \eqref{a33}, it follows that
\begin{equation}\label{a39}
  \bar{v}(r)-\frac{2}{r}\bar{w}'(r)<0, \qquad \forall \, r>0.
\end{equation}
From \eqref{12-3} and \eqref{a39}, we derive
\begin{equation}\label{a40}
  \left(r^{6}{\bar{u}}^{(3)}(r)\right)'(r)<0, \qquad \forall \, r>0,
\end{equation}
and hence, by integrating, we get $\bar{u}'''(r)<0$ for all $r>0$. Now suppose on the contrary that the first inequality in \eqref{a32} does not hold, then there must exist $\eta>0$ and $r_{\ast}>0$ such that $\bar{u}''(r)\lee -\eta<0$ for all $r\gee r_{\ast}$. As a consequence, integrating yields that
\begin{equation}\label{a41}
  \bar{u}'(r)-\bar{u}'(r_{\ast})\lee -\eta(r-r_{\ast})\rightarrow-\infty, \qquad \text{as} \,\, r\rightarrow+\infty,
\end{equation}
which contradicts Lemma \ref{lem6}. Hence $\bar{u}''(r)>0$ for all $r>0$.

It follows from \eqref{a32} that $\lim\limits_{r\rightarrow+\infty}\bar{u}''(r)=:\rho\gee 0$ exists. If $u$ is radially symmetric, then the lower bound estimate in \eqref{a35} also implies that $\liminf\limits_{|x|\rightarrow+\infty}\frac{u(x)}{|x|}\in(0,+\infty]$. Therefore, from Lemma \ref{lem8}, we know that if $q>5$ and $u$ is radially symmetric, then either $\rho>0$ and hence $u$ has exactly quadratic growth at $\infty$, or $\rho=0$ and hence $u$ has exactly linear growth at $\infty$. This concludes our proof of Lemma \ref{lem10}.
\end{proof}
\begin{rem}\label{rem8}
In Lemma \ref{lem10}, we can also assume the sub poly-harmonic property ``$\Delta u>0$ and $\Delta^{2}u<0$ in $\mathbb{R}^{5}$" instead of ``$u=o(|x|^{4})$ at $\infty$", without changing its proof.
\end{rem}

\begin{lem}\label{lem9}
Assume $n=5$, $m=3$ and $q>0$. Suppose that the $C^{6}$ positive entire solution $u$ to \eqref{PDE} has at most linear growth (uniformly) at $\infty$, that is,
\begin{equation}\label{e9a-3a}
  \lim_{|x|\rightarrow+\infty}\frac{u(x)}{|x|}=\alpha\in[0,+\infty).
\end{equation}
Then, for any $x\in\mathbb{R}^{5}$,
\begin{equation}\label{a14}
  \Delta^{2}u(x)=-\frac{1}{8\pi^{2}}\int_{\mathbb{R}^{5}}\frac{1}{|x-y|^{3}}u^{-q}(y)dy,
\end{equation}
\begin{equation}\label{a15}
  \Delta u(x)=\frac{1}{16\pi^{2}}\int_{\mathbb{R}^{5}}\frac{1}{|x-y|}u^{-q}(y)dy.
\end{equation}
\end{lem}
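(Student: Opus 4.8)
The plan is to read the two representation formulas off the Riesz decomposition theorem for positive superharmonic functions on $\mathbb{R}^{5}$, using the hypotheses of the lemma only at the very end to kill the resulting harmonic parts. Throughout put $w:=\Delta u$, $v:=\Delta^{2}u$, and note that $\mathbb{R}^{5}$ is Greenian since $n=5\gee 3$, with normalized fundamental solution $N(x)=\frac{1}{8\pi^{2}|x|^{3}}$, so that $-\Delta N=\delta_{0}$.

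First I would record the sub poly-harmonic property: the at most linear growth \eqref{e9a-3a} forces $u=o(|x|^{4})$ at $\infty$, so Lemma \ref{lem5} gives $w>0$ and $v<0$ in $\mathbb{R}^{5}$. Then $-v$ is a positive function with $-\Delta(-v)=\Delta^{3}u=u^{-q}\gee 0$, i.e.\ $-v$ is a positive superharmonic function on $\mathbb{R}^{5}$ with Riesz measure $u^{-q}\,dx$. The Riesz decomposition theorem then yields
\[
-v(x)=\int_{\mathbb{R}^{5}}N(x-y)u^{-q}(y)\,dy+\ell=\frac{1}{8\pi^{2}}\int_{\mathbb{R}^{5}}\frac{u^{-q}(y)}{|x-y|^{3}}\,dy+\ell,
\]
where the Newtonian potential on the right is finite (this also follows from $\int_{B_{r}(0)}u^{-q}\,dx\lee Cr$ in Theorem \ref{thm6}) and $\ell$ is the greatest harmonic minorant of $-v$; since $0$ is a harmonic minorant of $-v>0$, $\ell$ is a harmonic function on $\mathbb{R}^{5}$ bounded below, hence a nonnegative constant. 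To see $\ell=0$ I would pass to spherical averages: $v<0$ together with \eqref{a5} gives $-\frac{C}{r^{2}}\lee\bar{v}(r)<0$ (using $\Delta u(0)=w(0)>0$), so $\bar{v}(r)\to 0$; since the spherical average of the potential is nonnegative and $\ell\gee 0$, the averaged identity forces $\ell=0$. This is \eqref{a14}.

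Next, by \eqref{a14} we have $\Delta w=v=-\frac{1}{8\pi^{2}}\int_{\mathbb{R}^{5}}\frac{u^{-q}(y)}{|x-y|^{3}}\,dy=:-\psi\lee 0$, so $w$ is itself a positive superharmonic function on $\mathbb{R}^{5}$, now with Riesz measure $\psi\,dx\gee 0$. Applying the Riesz decomposition theorem once more,
\[
w(x)=\int_{\mathbb{R}^{5}}N(x-y)\psi(y)\,dy+\ell'=\big((N*N)*u^{-q}\big)(x)+\ell',\qquad \ell'\gee 0\ \text{constant},
\]
and in particular $\int_{\mathbb{R}^{5}}\frac{u^{-q}(y)}{|x-y|}\,dy<+\infty$ because the potential is bounded above by the finite value $w(x)$. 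A short explicit computation — solving $-\Delta\Phi=N$ in $\mathbb{R}^{5}$ for the radial solution that decays at $\infty$ and leaves no mass at the origin — gives $N*N(x)=\frac{1}{16\pi^{2}|x|}$, so $w(x)=\frac{1}{16\pi^{2}}\int_{\mathbb{R}^{5}}\frac{u^{-q}(y)}{|x-y|}\,dy+\ell'$. Finally $\ell'=0$: from $\Delta u=w\gee\ell'$ one integrates the radial identity $(r^{4}\bar{u}'(r))'=r^{4}\bar{w}(r)\gee\ell'r^{4}$ to get $\bar{u}'(r)\gee\frac{\ell'}{5}r$ and then $\bar{u}(r)\gee\bar{u}(0)+\frac{\ell'}{10}r^{2}$; if $\ell'>0$ this is at least quadratic growth, contradicting $\bar{u}(r)\lee Cr$, which is guaranteed by \eqref{e9a-3a}. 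Hence $\ell'=0$ and \eqref{a15} holds.

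The hard part is really just the two constant-killing steps: $\ell=0$ rests on the quantitative decay $\bar{v}(r)\gee -Cr^{-2}$ from Lemma \ref{lem7} (which in turn rests on Lemma \ref{lem5}), and $\ell'=0$ rests on the at most linear growth hypothesis through the elementary fact that a strictly positive lower bound for $\Delta u$ produces quadratic growth of $\bar u$. Everything else — Greenian-ness of $\mathbb{R}^{5}$, finiteness and continuity of the two Newtonian potentials, superharmonicity of $-v$ and $w$, and the convolution identity $N*N(x)=\frac{1}{16\pi^{2}|x|}$ — is routine, with the Riesz decomposition theorem supplying the structural backbone.
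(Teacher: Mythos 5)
Your argument is correct, and it takes a genuinely different route from the paper's. The paper proceeds by hand: it first invokes Corollary \ref{cor0} and Remark \ref{rem6} to get $\alpha>0$ and $q\gee 5$, defines the two potentials $I$ and $J$ directly, proves the two-sided pointwise decay bounds $c_{1}|x|^{-3}\lee I(x)\lee c_{2}|x|^{-3}$ and $\tilde{c}_{1}|x|^{-1}\lee J(x)\lee \tilde{c}_{2}|x|^{-1}$ (the upper bounds use the exact linear growth to control the near-diagonal singular integral, which is where $q\gee5$ enters), then applies Liouville's theorem to the one-side-bounded harmonic functions $\Delta^{2}u+I$ and $\Delta u-J$ and kills the resulting constants by spherical averaging and by the quadratic-growth contradiction, respectively. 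You instead invoke the Riesz decomposition theorem for the positive superharmonic functions $-v$ and $w$ on the Greenian space $\mathbb{R}^{5}$; this packages the ``potential plus nonnegative harmonic (hence constant)'' structure in one stroke, so you never need the pointwise decay estimates \eqref{b3}--\eqref{b4} at all, and the finiteness of the second potential comes for free from the decomposition rather than from an a priori bound on $\int u^{-q}$. What the hypotheses buy in each case is then transparent: in your version the linear growth is used only to kill $\ell'$ (via the elementary fact that $\Delta u\gee\ell'>0$ forces quadratic growth of $\bar{u}$), and the decay $\bar{v}(r)\gee -Cr^{-2}$ from \eqref{a5} kills $\ell$ --- exactly the two constant-killing steps you isolate. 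Your constant-killing arguments match the paper's; only the derivation of the ``harmonic remainder is constant'' step differs. The identification $N*N(x)=\frac{1}{16\pi^{2}|x|}$ and the Tonelli interchange $N*(N*u^{-q})=(N*N)*u^{-q}$ are routine as you say, and the normalization $-\Delta N=\delta_{0}$ with $N(x)=\frac{1}{8\pi^{2}|x|^{3}}$ is the correct one for $\mathbb{R}^{5}$. The only thing worth making explicit if you write this up is the standard fact that the greatest harmonic minorant of a positive superharmonic function on $\mathbb{R}^{5}$ is a nonnegative constant (Liouville for nonnegative entire harmonic functions), which you do state; otherwise the proof is complete.
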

\begin{proof}
From Corollary \ref{cor0} and Remark \ref{rem6}, we know that $\alpha>0$ and $q\gee 5$. Thus we may define the auxiliary functions
\begin{equation}\label{b0}
  I(x):=\frac{1}{8\pi^{2}}\int_{\mathbb{R}^{5}}\frac{1}{|x-y|^{3}}u^{-q}(y)dy, \qquad \forall \, x\in\mathbb{R}^{5},
\end{equation}
\begin{equation}\label{b1}
  J(x):=\frac{1}{16\pi^{2}}\int_{\mathbb{R}^{5}}\frac{1}{|x-y|}u^{-q}(y)dy, \qquad \forall \, x\in\mathbb{R}^{5}.
\end{equation}
It follows immediately that
\begin{equation}\label{b2}
  \Delta(\Delta^{2}u+I(x))=0, \quad \Delta(\Delta u-J(x))=\Delta^{2}u(x)+I(x), \qquad \forall \, x\in\mathbb{R}^{5}.
\end{equation}
By Lemma \ref{lem5}, we have the sub poly-harmonic property: $\Delta u>0$ and $\Delta^{2}u<0$ in $\mathbb{R}^{5}$. One can easily verify that, for $|x|$ sufficiently large,
\begin{eqnarray}\label{b3}
  && \frac{c_{1}}{|x|^{3}}:=\frac{4}{27}|x|^{-3}\int_{\mathbb{R}^{5}}u^{-q}(y)dy\lee \left(\frac{2}{3}\right)^{3}|x|^{-3}\int_{|y|<\frac{|x|}{2}}u^{-q}(y)dy \\
 \nonumber && \qquad \lee \int_{|y|<\frac{|x|}{2}}\frac{1}{|x-y|^{3}}u^{-q}(y)dy\lee \int_{\mathbb{R}^{5}}\frac{1}{|x-y|^{3}}u^{-q}(y)dy \\
 \nonumber && \qquad =I(x)\lee \frac{8}{|x|^{3}}\int_{|x-y|\gee \frac{|x|}{2}}u^{-q}(y)dy+\left(\frac{\alpha}{4}|x|\right)^{-q}\int_{|x-y|<\frac{|x|}{2}}\frac{1}{|x-y|^{3}}dy \\
 \nonumber && \qquad \lee \frac{8}{|x|^{3}}\int_{\mathbb{R}^{5}}u^{-q}(y)dy+\frac{\Sigma_{4}}{8}\left(\frac{\alpha}{4}\right)^{-q}|x|^{2-q}\lee \frac{c_{2}}{|x|^{3}},
\end{eqnarray}
where $\Sigma_{4}$ denotes the surface area of the unit $4$-sphere in $\mathbb{R}^{5}$. Similarly, we can also obtain that
\begin{equation}\label{b4}
  \frac{\tilde{c}_{1}}{|x|}\lee J(x)\lee \frac{\tilde{c}_{2}}{|x|}, \qquad \forall \, |x| \,\, \text{large enough.}
\end{equation}
Therefore, Liouville theorem implies that
\begin{equation}\label{b5}
  \Delta^{2}u(x)+I(x)=C_{0}\lee 0, \qquad \forall \, x\in\mathbb{R}^{5}.
\end{equation}
Taking spherical average, by \eqref{a5} and \eqref{a6} in Lemma \ref{lem7} and \eqref{b3}, we have
\begin{equation}\label{b6}
  C_{0}=\Delta^{2}\bar{u}(r)+\bar{I}(r)=\bar{v}(r)+\bar{I}(r)\rightarrow0, \qquad \text{as} \,\, r\rightarrow+\infty,
\end{equation}
and hence
\begin{equation}\label{b7}
  \Delta^{2}u(x)+I(x)=C_{0}=0, \qquad \forall \, x\in\mathbb{R}^{5}.
\end{equation}

By \eqref{b2}, \eqref{b4} and \eqref{b7}, we deduce from Liouville theorem that
\begin{equation}\label{b8}
  \Delta u(x)-J(x)=C_{\infty}\gee 0, \qquad \forall \, x\in\mathbb{R}^{5}.
\end{equation}
If $C_{\infty}>0$, then
\begin{equation}\label{b9}
  \lim_{r\rightarrow+\infty}\bar{w}(r)=\lim_{r\rightarrow+\infty}\Delta\bar{u}(r)
  =\lim_{r\rightarrow+\infty}\frac{1}{r^{4}}\left(r^{4}\bar{u}'(r)\right)'(r)=C_{\infty}>0.
\end{equation}
By integrating twice, we infer from \eqref{b9} that, for $r>0$ sufficiently large,
\begin{equation}\label{b10}
  \bar{u}(r)\gee \frac{C_{\infty}}{32}r^{2},
\end{equation}
which contradicts the exact linear growth \eqref{e9a-3a} of $u$ at $\infty$. Thus $C_{\infty}=0$ and
\begin{equation}\label{b11}
  \Delta u(x)-J(x)=0, \qquad \forall \, x\in\mathbb{R}^{5}.
\end{equation}
This finishes our proof of Lemma \ref{lem9}.
\end{proof}

We can show the following integral representation formula for $C^{6}$ positive entire solution $u$ to \eqref{PDE} that has exactly linear growth (uniformly) at $\infty$ and hence derive the classification result when $q=11$.
\begin{thm}\label{thm9}
Assume $n=5$, $m=3$ and $q>6$. Suppose the $C^{6}$ positive entire solution $u$ to \eqref{PDE} has at most linear growth (uniformly) at $\infty$, that is,
\begin{equation}\label{e9a-3l}
  \lim_{|x|\rightarrow+\infty}\frac{u(x)}{|x|}=\alpha\in[0,+\infty).
\end{equation}
Then $u$ has the following integral representation:
\begin{equation}\label{e5-3}
  u(x)=\frac{1}{64\pi^{2}}\int_{\mathbb{R}^{5}}|x-y|u^{-q}(y)dy+\gamma, \qquad \forall \, x\in\mathbb{R}^{5},
\end{equation}
where $\gamma$ is a constant. Moreover, $\gamma<0$ if $6<q<11$, $\gamma=0$ if $q=11$ and $\gamma>0$ if $q>11$. Furthermore, if $q=11$, then, up to dilations and translations, $u$ must assume the unique form: $u(x)=c(1+|x|^{2})^{\frac{1}{2}}$.
\end{thm}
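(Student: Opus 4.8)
The plan is to integrate the representation \eqref{a15} for $\Delta u$ obtained in Lemma \ref{lem9} once more against the kernel $|x-y|$, which plays for $\Delta^{2}$ in $\mathbb{R}^{5}$ the role of a fundamental solution since $\Delta_{x}|x-y|=\tfrac{4}{|x-y|}$, to produce a candidate primitive; then to identify the harmonic remainder by a Liouville argument, and to compute the constant $\gamma$ by a Pohozaev-type identity. Throughout I use that, by Corollary \ref{cor0} and Remark \ref{rem6}, $u$ has \emph{exactly} linear growth with $\alpha>0$, so $u(y)\gee\tfrac{\alpha}{2}|y|$ for large $|y|$ and hence $u^{-q}(y)=O(|y|^{-q})$.

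Since $q>6$, the decay of $u^{-q}$ makes
\[
  P(x):=\frac{1}{64\pi^{2}}\int_{\mathbb{R}^{5}}|x-y|\,u^{-q}(y)\,dy
\]
finite for every $x$, and since $|\nabla_{x}|x-y||\lee1$ and $|D^{2}_{x}|x-y||\lee\tfrac{C}{|x-y|}$ are integrable against $u^{-q}(y)$ (locally $|x-y|^{-1}\in L^{1}_{loc}(\mathbb{R}^{5})$, at infinity $q>4$ suffices), one may differentiate under the integral sign to get $P\in C^{2}$ with $\Delta P(x)=\frac{1}{16\pi^{2}}\int_{\mathbb{R}^{5}}\frac{1}{|x-y|}u^{-q}(y)\,dy=\Delta u(x)$ by \eqref{a15}. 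Hence $h:=u-P$ is harmonic on $\mathbb{R}^{5}$; from $|x-y|\lee|x|+|y|$ and $\int_{\mathbb{R}^{5}}(1+|y|)u^{-q}(y)\,dy<+\infty$ (this is where $q>6$ is essential) it follows that $P$, and therefore $h$, grows at most linearly, so the Liouville theorem for harmonic functions of polynomial growth gives $h(x)=\gamma+b\cdot x$ for some $\gamma\in\mathbb{R}$, $b\in\mathbb{R}^{5}$. Dominated convergence gives $P(x)/|x|\to\frac{1}{64\pi^{2}}\int_{\mathbb{R}^{5}}u^{-q}(y)\,dy=\alpha$, so $h(x)/|x|=(u(x)-P(x))/|x|\to0$; evaluating along $x=R\omega$, $R\to+\infty$, forces $b\cdot\omega=0$ for every unit $\omega$, i.e. $b=0$. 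This yields the representation \eqref{e5-3}, namely $u=P+\gamma$.

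For the sign of $\gamma$ I would argue as follows. Differentiating $u=P+\gamma$ gives $x\cdot\nabla u(x)=\frac{1}{64\pi^{2}}\int_{\mathbb{R}^{5}}\frac{x\cdot(x-y)}{|x-y|}u^{-q}(y)\,dy$; multiplying by $u^{-q}(x)$, integrating over $\mathbb{R}^{5}$, and symmetrising in $x\leftrightarrow y$ (legitimate since $\frac{|x\cdot(x-y)|}{|x-y|}\lee|x|$ and $\int|x|u^{-q}(x)\,dx<+\infty$ for $q>6$) replaces the kernel by $\tfrac12|x-y|$ because $x\cdot(x-y)+y\cdot(y-x)=|x-y|^{2}$, so by \eqref{e5-3}
\[
  \int_{\mathbb{R}^{5}}u^{-q}(x)\,x\cdot\nabla u(x)\,dx=\frac12\int_{\mathbb{R}^{5}}(u-\gamma)u^{-q}\,dx=\frac12\int_{\mathbb{R}^{5}}u^{1-q}\,dx-\frac{\gamma}{2}\int_{\mathbb{R}^{5}}u^{-q}\,dx.
\]
On the other hand $u^{-q}\,x\cdot\nabla u=\frac{1}{1-q}\,x\cdot\nabla(u^{1-q})$, and an integration by parts on $B_{R}$ — whose boundary term $R\int_{\partial B_{R}}u^{1-q}\,d\sigma\lee CR^{6-q}\to0$ and whose bulk term converges because $\int u^{1-q}\,dx<+\infty$, both once more thanks to $q>6$ — gives $\int_{\mathbb{R}^{5}}u^{-q}\,x\cdot\nabla u\,dx=\frac{5}{q-1}\int_{\mathbb{R}^{5}}u^{1-q}\,dx$. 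Equating the two expressions and using $\frac{5}{q-1}-\frac12=\frac{11-q}{2(q-1)}$ yields
\[
  \gamma=\frac{q-11}{q-1}\cdot\frac{\int_{\mathbb{R}^{5}}u^{1-q}\,dx}{\int_{\mathbb{R}^{5}}u^{-q}\,dx},
\]
and since $q>6$ makes $q-1$ and both integrals positive, $\gamma<0$ for $6<q<11$, $\gamma=0$ for $q=11$, $\gamma>0$ for $q>11$. Finally, when $q=11$ we have $\gamma=0$, so \eqref{e5-3} reduces to the integral equation \eqref{e6}, and the classification of Feng and Xu in \cite{FX} gives $u(x)=c(1+|x|^{2})^{\frac12}$ up to dilations and translations.

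The part I expect to be most delicate — and the reason the hypothesis here is $q>6$ rather than $q>5$ as in Lemma \ref{lem9} — is the behaviour at infinity: the finiteness of $P$, of $\int|y|u^{-q}\,dy$ and of $\int u^{1-q}\,dx$, the limit $P(x)/|x|\to\alpha$, and the vanishing of $R\int_{\partial B_{R}}u^{1-q}\,d\sigma$ all rest on the single inequality $6-q<0$ together with the linear lower bound $u\gtrsim|x|$ from Corollary \ref{cor0}. Everything else — differentiation under the integral sign, the Liouville step, and Fubini in the symmetrisation — is routine once those integrability facts are in hand.
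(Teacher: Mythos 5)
Your proposal is correct and follows essentially the same route as the paper: define the potential $K(x)=\frac{1}{64\pi^{2}}\int|x-y|u^{-q}(y)\,dy$, use Lemma \ref{lem9} to see that $u-K$ is harmonic of at most linear growth, eliminate the linear part by comparing limits along rays, and obtain the sign of $\gamma$ from the Pohozaev-type identity produced by pairing $x\cdot\nabla u$ with $u^{-q}$ and symmetrising the kernel into $\tfrac12|x-y|$. The only immaterial differences are that the paper carries out the symmetrisation on balls $B_{R}$ and passes to the limit rather than invoking Fubini globally, and that you should first record $P(x)/|x|\to\zeta:=\frac{1}{64\pi^{2}}\int u^{-q}$ and then deduce $\zeta=\alpha$ together with $b=0$ from $b\cdot\omega=\alpha-\zeta$ for all unit $\omega$, rather than asserting $\zeta=\alpha$ up front.
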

\begin{proof}
From Corollary \ref{cor0}, we know that $\alpha>0$. Since $q>6$, we may define the auxiliary function
\begin{equation}\label{c0}
  K(x):=\frac{1}{64\pi^{2}}\int_{\mathbb{R}^{5}}|x-y|u^{-q}(y)dy, \qquad \forall \, x\in\mathbb{R}^{5}.
\end{equation}
By direct calculation and Lemma \ref{lem9}, one can easily verify that
\begin{equation}\label{c1}
  \Delta\left(u-K\right)(x)=\Delta u(x)-\frac{1}{16\pi^{2}}\int_{\mathbb{R}^{5}}\frac{1}{|x-y|}u^{-q}(y)dy=0, \qquad \forall \, x\in\mathbb{R}^{5}.
\end{equation}
Let
\begin{equation}\label{c2}
  \frac{1}{64\pi^{2}}\int_{\mathbb{R}^{5}}u^{-q}(y)dy=:\zeta\in(0,+\infty).
\end{equation}
Since $|x|u^{-q}\in L^{1}(\mathbb{R}^{5})$, one can deduce from Lebesgue's dominated convergence theorem that
\begin{equation}\label{c3}
  \lim_{|x|\rightarrow+\infty}\frac{K(x)}{|x|}=\zeta\in(0,+\infty).
\end{equation}
Combining \eqref{e9a-3l}, \eqref{c1} and \eqref{c3}, it follows from Liouville theorem that
\begin{equation}\label{c4}
  u(x)=K(x)+\sum_{k=1}^{5}a_{k}x_{k}+\gamma, \qquad \forall \, x\in\mathbb{R}^{5}
\end{equation}
for some constants $a_{k}$ ($1\lee k\lee 5$) and $\gamma$. Dividing \eqref{c4} by $|x|$, we get
\begin{equation}\label{c5}
  \frac{u(x)}{|x|}=\frac{K(x)}{|x|}+\vec{a}\cdot\vec{e}+\frac{\gamma}{|x|}, \qquad \forall \, x\in\mathbb{R}^{5}\setminus\{0\},
\end{equation}
where the unit vector $\vec{e}:=\frac{x}{|x|}\in\partial B_{1}(0)$. Now letting $|x|\rightarrow+\infty$ and taking limits in \eqref{c5}, combining with \eqref{e9a-3l} and \eqref{c3} yield that
\begin{equation}\label{c6}
  \alpha=\zeta+\vec{a}\cdot\vec{e}, \qquad \forall \, \vec{e}\in\partial B_{1}(0).
\end{equation}
Hence we have $\alpha=\zeta$ and $a_{k}=0$ for every $1\lee k\lee 5$. Consequently,
\begin{equation}\label{c7}
  u(x)=K(x)+\gamma=\frac{1}{64\pi^{2}}\int_{\mathbb{R}^{5}}|x-y|u^{-q}(y)dy+\gamma, \qquad \forall \, x\in\mathbb{R}^{5}.
\end{equation}

\medskip

We can extend the integral representation \eqref{e5-3} from $q>6$ to $q>5$ and prove the following Lemma.
\begin{lem}\label{lem11}
Assume $n=5$, $m=3$ and $q>5$. Suppose the $C^{6}$ positive entire solution $u$ to \eqref{PDE} has at most linear growth (uniformly) at $\infty$, that is,
\begin{equation}\label{c8}
  \lim_{|x|\rightarrow+\infty}\frac{u(x)}{|x|}=\alpha\in[0,+\infty).
\end{equation}
Then $u$ has the following integral representation:
\begin{equation}\label{c9}
  u(x)=\frac{1}{64\pi^{2}}\int_{\mathbb{R}^{5}}\left[|x-y|-|y|\right]u^{-q}(y)dy+\widetilde{\gamma}, \qquad \forall \, x\in\mathbb{R}^{5},
\end{equation}
where $\widetilde{\gamma}$ is a constant. Consequently,
\begin{equation}\label{c10}
  \nabla u(x)=\frac{1}{64\pi^{2}}\int_{\mathbb{R}^{5}}\frac{x-y}{|x-y|}u^{-q}(y)dy, \qquad \forall \, x\in\mathbb{R}^{5}.
\end{equation}
\end{lem}
\begin{proof}
From Corollary \ref{cor0}, we know that $\alpha>0$. When $q>6$, we have done. Now we only consider the case $5<q\lee 6$. Define the auxiliary function
\begin{equation}\label{c11}
  \widetilde{K}(x):=\frac{1}{64\pi^{2}}\int_{\mathbb{R}^{5}}\left[|x-y|-|y|\right]u^{-q}(y)dy, \qquad \forall \, x\in\mathbb{R}^{5}.
\end{equation}
It is clear from Lemma \ref{lem9} that
\begin{equation}\label{c12}
\Delta\left(u-\widetilde{K}\right)(x)=\Delta u(x)-\frac{1}{16\pi^{2}}\int_{\mathbb{R}^{5}}\frac{1}{|x-y|}u^{-q}(y)dy=0, \qquad \forall \, x\in\mathbb{R}^{5}.
\end{equation}
Since $\zeta:=\int_{\mathbb{R}^{5}}u^{-q}(x)dx<+\infty$, one can deduce from Lebesgue's dominated convergence theorem that
\begin{equation}\label{c13}
  \lim_{|x|\rightarrow+\infty}\frac{\widetilde{K}(x)}{|x|}=\zeta\in(0,+\infty).
\end{equation}
The rest of the proof is entirely the same to the above proof for the case $q>6$, and we can finally derive from Liouville theorem that
\begin{equation}\label{c14}
  u(x)=\widetilde{K}(x)+\widetilde{\gamma}, \qquad \forall \, x\in\mathbb{R}^{5}
\end{equation}
for some constant $\widetilde{\gamma}$. Hence we finishes our proof of Lemma \ref{lem11}.
\end{proof}

\medskip

Now we continue carrying out our proof of Theorem \ref{thm9}. By \eqref{c10} in Lemma \ref{lem11}, we have
\begin{equation}\label{c15}
 x\cdot\nabla u(x)=\frac{1}{64\pi^{2}}\int_{\mathbb{R}^{5}}\frac{|x|^{2}-x\cdot y}{|x-y|}u^{-q}(y)dy, \qquad \forall \, x\in\mathbb{R}^{5}.
\end{equation}
Multiplying \eqref{c15} by $u^{-q}$ and integrating over $B_{R}(0)$ yield that
\begin{equation}\label{c16}
  \int_{B_{R}(0)}\frac{1}{1-q}x\cdot\left[\nabla u^{1-q}\right](x)dx=\frac{1}{64\pi^{2}}\int_{\mathbb{R}^{5}}\left[\int_{B_{R}(0)}\frac{|x|^{2}-x\cdot y}{|x-y|}u^{-q}(x)dx\right]u^{-q}(y)dy
\end{equation}
for any $R>0$. Integrating by parts, due to \eqref{e9a-3l} and $q>6$, we get
\begin{eqnarray}\label{c17}
  && \int_{B_{R}(0)}\frac{1}{1-q}x\cdot\left[\nabla u^{1-q}\right](x)dx
  =\frac{R}{1-q}\int_{\partial B_{R}(0)}u^{1-q}(x)d\sigma-\frac{5}{1-q}\int_{B_{R}(0)}u^{1-q}(x)dx \\
 \nonumber &&=o_{R}(1)-\frac{5}{1-q}\int_{B_{R}(0)}u^{1-q}(x)dx,
\end{eqnarray}
as $R\rightarrow+\infty$. At the same time, by the integral representation formula \eqref{e5-3}, we deduce from \eqref{e9a-3l} and $q>6$ that
\begin{eqnarray}\label{c18}
  && \frac{1}{64\pi^{2}}\int_{\mathbb{R}^{5}}\left[\int_{B_{R}(0)}\frac{|x|^{2}-x\cdot y}{|x-y|}u^{-q}(x)dx\right]u^{-q}(y)dy \\
 \nonumber &&=\frac{1}{128\pi^{2}}\int_{\mathbb{R}^{5}}\left[\int_{B_{R}(0)}\frac{|x-y|^{2}+|x|^{2}-|y|^{2}}{|x-y|}u^{-q}(x)dx\right]u^{-q}(y)dy  \\
 \nonumber &&=\frac{1}{2}\int_{B_{R}(0)}u^{-q}(x)\left[u(x)-\gamma\right]dx
  +\frac{1}{128\pi^{2}}\int_{\mathbb{R}^{5}}\left[\int_{B_{R}(0)}\frac{|x|^{2}-|y|^{2}}{|x-y|}u^{-q}(x)dx\right]u^{-q}(y)dy  \\
 \nonumber && =\frac{1}{2}\int_{B_{R}(0)}u^{1-q}(x)dx-\frac{\gamma}{2}\int_{B_{R}(0)}u^{-q}(x)dx+o_{R}(1),
\end{eqnarray}
as $R\rightarrow+\infty$. Now letting $R\rightarrow+\infty$ and taking limits in \eqref{c16}, combining with \eqref{c17} and \eqref{c18}, we derive
\begin{equation}\label{c19}
  \frac{5}{q-1}\int_{\mathbb{R}^{5}}u^{1-q}(x)dx=\frac{1}{2}\int_{\mathbb{R}^{5}}u^{1-q}(x)dx-\frac{\gamma}{2}\int_{\mathbb{R}^{5}}u^{-q}(x)dx,
\end{equation}
and hence
\begin{equation}\label{c20}
  \frac{11-q}{2(q-1)}\int_{\mathbb{R}^{5}}u^{1-q}(x)dx=-\frac{\gamma}{2}\int_{\mathbb{R}^{5}}u^{-q}(x)dx.
\end{equation}
As a consequence, we deduce that $\gamma<0$ if $6<q<11$, $\gamma=0$ if $q=11$, and $\gamma>0$ if $q>11$. In particular, when $q=11$, we have $\gamma=0$ and hence $u\in C^{6}(\mathbb{R}^{5})$ is a positive entire solution to the integral equation:
\begin{equation}\label{IE}
  u(x)=\frac{1}{64\pi^{2}}\int_{\mathbb{R}^{5}}|x-y|u^{-11}(y)dy, \qquad \forall \, x\in\mathbb{R}^{5}.
\end{equation}
It follows from Feng and Xu \cite{FX} that, up to dilations and translations, $u$ must assume the unique form: $u(x)=c(1+|x|^{2})^{\frac{1}{2}}$. This concludes our proof of Theorem \ref{thm9}.
\end{proof}

In the following theorem, we mainly concern about asymptotic properties and nonexistence for radially symmetric positive entire solutions $u$ without the property $u(x)=o(|x|^{4})$ at $\infty$ or the sub poly-harmonic property.
\begin{thm}\label{thm10}
i) \, Assume $n\gee 3$ but $n\neq4, 6$ and $q>\frac{1}{2}$. Suppose $u$ is a radially symmetric positive entire solution to \eqref{PDE} satisfying $\liminf\limits_{|x|\rightarrow+\infty}\frac{u(x)}{|x|^{4}}\in(0,+\infty]$, then $\lim\limits_{|x|\rightarrow+\infty}\frac{u(x)}{|x|^{4}}=\frac{1}{8n(n+2)}\lim\limits_{|x|\rightarrow+\infty}\Delta^{2}u(x)\in(0,+\infty)$. That is, if radially symmetric positive solution $u$ in entire $\mathbb{R}^{n}$ has at least quartic growth at $\infty$ then it must has exactly quartic growth at $\infty$. \\
ii) \, Conversely, for any $n\gee 2$ and $q>0$, suppose $u$ is a radially symmetric positive entire solution to \eqref{PDE}. We have: (a) If $\lim\limits_{|x|\rightarrow+\infty}\Delta^{2}u(x)\in(0,+\infty]$, then $\liminf\limits_{|x|\rightarrow+\infty}\frac{u(x)}{|x|^{4}}\in(0,+\infty]$. Furthermore, if assume further $n\gee 3$ but $n\neq4, 6$ and $q>\frac{1}{2}$, then $\lim\limits_{|x|\rightarrow+\infty}\frac{u(x)}{|x|^{4}}=\frac{1}{8n(n+2)}\lim\limits_{|x|\rightarrow+\infty}\Delta^{2}u(x)\in(0,+\infty)$. (b) If $\lim\limits_{|x|\rightarrow+\infty}\Delta^{2}u(x)=0$, then $u$ satisfy the sub poly-harmonic property and hence has at most quadratic growth at $\infty$.
\end{thm}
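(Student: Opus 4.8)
The plan is to pass to the radial variable and exploit the three successive Laplacian inversions recovering $u$ from $u^{-q}$ through $v:=\Delta^{2}u$ and $w:=\Delta u$, i.e. $\Delta v=u^{-q}$, $\Delta w=v$, $\Delta u=w$. For a radial function $f$ one has $\Delta f(r)=r^{1-n}(r^{n-1}f')'(r)$, so, using $f'(0)=0$, integrating twice and exchanging the order of integration, one gets for $n\gee 3$
\begin{equation*}
 f(r)=f(0)+\frac{1}{n-2}\int_{0}^{r}t\,\Delta f(t)\,dt-\frac{r^{2-n}}{n-2}\int_{0}^{r}t^{n-1}\Delta f(t)\,dt .
\end{equation*}
By Lemma \ref{lem6} we have $\bar v'(r)=v'(r)>0$ for all $r>0$, so $v=\Delta^{2}u$ is strictly increasing and $L:=\lim_{r\rt+\infty}\Delta^{2}u(r)$ always exists in $(v(0),+\infty]$; the argument splits according to the three cases $L\lee 0$, $L\in(0,+\infty)$, and $L=+\infty$.

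For part i), the hypothesis $\liminf_{r\rt+\infty}u(r)/r^{4}>0$ gives $u(r)\gee c\,r^{4}$ for $r$ large, hence $u^{-q}(r)\lee C\,r^{-4q}$; since $q>\tfrac12$ this yields $\int_{0}^{+\infty}t\,u^{-q}(t)\,dt<+\infty$, and a short case analysis ($4q<n$, $4q=n$, $4q>n$) shows $r^{2-n}\int_{0}^{r}t^{n-1}u^{-q}(t)\,dt\rt 0$. The displayed identity with $f=v$ then gives $L=v(0)+\tfrac{1}{n-2}\int_{0}^{+\infty}t\,u^{-q}(t)\,dt<+\infty$. The case $L\lee 0$ is impossible: then $v=\Delta^{2}u<0$ everywhere (as $v$ increases to $L$), so by the argument of part ii)(b) below $\Delta u>0$, and the bound \eqref{7-3a} forces $u(r)\lee u(0)+\tfrac{\Delta u(0)}{4}r^{2}$, contradicting the hypothesis; hence $L\in(0,+\infty)$. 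Writing $v(r)=L+o(1)$ and feeding it into the identity with $f=w$, L'Hopital's rule gives $\int_{0}^{r}t^{n-1}v(t)\,dt=\tfrac{L}{n}r^{n}+o(r^{n})$ and $\int_{0}^{r}t\,v(t)\,dt=\tfrac{L}{2}r^{2}+o(r^{2})$, hence $w(r)=\tfrac{L}{2n}r^{2}+o(r^{2})$; inserting this into the identity with $f=u$ gives $u(r)=\tfrac{L}{8n(n+2)}r^{4}+o(r^{4})$, which is exactly $\lim_{r\rt+\infty}u(r)/r^{4}=\tfrac{1}{8n(n+2)}\lim_{r\rt+\infty}\Delta^{2}u(r)\in(0,+\infty)$. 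The exclusions $n\neq 4,6$ keep the iterated Newtonian kernels pure powers, so that the identities above carry no logarithmic corrections.

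For part ii)(a) I run the same three inversions starting merely from $\ell:=\lim_{r\rt+\infty}\Delta^{2}u(r)\in(0,+\infty]$: for $r$ large $v(r)\gee\ell/2$ (or $v(r)\gee M$ for any prescribed $M$ when $\ell=+\infty$), and successively integrating $(r^{n-1}w')'=r^{n-1}v$ and $(r^{n-1}u')'=r^{n-1}w$ yields $u(r)\gee c'\,r^{4}$ for $r$ large with $c'>0$ (resp. $c'$ arbitrarily large), i.e. $\liminf_{r\rt+\infty}u(r)/r^{4}\in(0,+\infty]$; under the extra hypotheses $n\neq 4,6$ and $q>\tfrac12$ the sharper conclusion then follows by applying part i). For part ii)(b), if $\lim_{r\rt+\infty}\Delta^{2}u(r)=0$ then $v=\Delta^{2}u<0$ in $\mathbb{R}^{n}$ ($v$ strictly increasing to $0$); hence $(r^{n-1}w')'=r^{n-1}v<0$, so $w=\Delta u$ is strictly decreasing, and if $w(r_{0})\lee 0$ for some $r_{0}>0$ then $w(r)\lee-\eta<0$ for $r\gee r_{0}+1$, whence $r^{n-1}u'(r)=\int_{0}^{r}t^{n-1}w(t)\,dt\rt-\infty$ and $u(r)\rt-\infty$, contradicting $u>0$; therefore $\Delta u>0$, the sub poly-harmonic property holds, and the at-most-quadratic growth of $u$ is then precisely the bound \eqref{7-3a}.

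The step I expect to be the main obstacle is the finiteness of $L$ in part i): from the single one-sided bound $u(r)\gee c\,r^{4}$ together with $q>\tfrac12$ one must extract enough integrability of $u^{-q}$ against each of the three iterated kernels to pass to the limit in all three integral identities and to discard the boundary terms, and this is precisely where the dimensional restrictions $n\gee 3$, $n\neq 4,6$ come in. Once $L<+\infty$ is in hand the remaining work is the bookkeeping of the limits fixing the constant $\tfrac{1}{8n(n+2)}$ and checking that all error terms are genuinely of lower order.
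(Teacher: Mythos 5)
Your proof is correct, and while the overall strategy coincides with the paper's (pass to the radial ODE, show $L:=\lim_{r\to\infty}\Delta^{2}u(r)$ is finite using $q>\frac12$ and the quartic lower bound, then propagate the asymptotics down through $\Delta u$ and $u$ by L'Hopital), your implementation differs in a useful way. The paper expands everything at once into a single closed formula \eqref{f0}--\eqref{f2} for $\bar u(r)-\bar u(0)$ whose coefficients carry the denominators $n-4$ and $n-6$; this is exactly where their hypothesis $n\neq 4,6$ enters. You instead invert the Laplacian three times in succession, each step using only the two-term Newtonian formula with the single denominator $n-2$, so your argument in fact goes through verbatim for $n=4$ and $n=6$ as well — your closing remark that the exclusions are needed to avoid ``logarithmic corrections'' is a misdiagnosis (logs only appear at $n=2$ in this formula; the paper's restriction comes from its choice to expand, not from the kernels themselves). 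Two further small divergences: you rule out $L\lee 0$ by a detour through the sub poly-harmonic property and the bound \eqref{7-3a}, whereas the paper reads positivity of $L$ off directly from the limit identity and the hypothesis $\liminf u/r^{4}>0$ — both are fine; and in part (ii)(b) you replace the paper's citation of Lemma 3.3 of \cite{NNPY} (to get $\Delta u>0$ from $\Delta^{2}u<0$) with a short self-contained integration argument, which is correct and arguably preferable.
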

\begin{proof}
(i) Recall that $v:=\Delta^{2}u$ and $w=\Delta u$. By the tri-harmonic equation \eqref{PDE} and integrating by parts, we can derive, for $n=3,5$ or $n\gee 7$,
\begin{equation}\label{f0}
\begin{aligned}
& \quad \bar{u}(r)-\bar{u}(0) \\
&=\frac{1}{8n(n+2)}\left(\bar{v}(0)+\frac{1}{n-2}\int_{0}^{r}t\overline{u^{-q}}(t)dt\right)r^{4} \\
&\quad +\frac{1}{2n}\left(\bar{w}(0)-\frac{1}{2(n-2)(n-4)}\int_{0}^{r}t^{3}\overline{u^{-q}}dt\right)r^{2}  \\
&\quad +\frac{1}{8(n-2)(n-4)(n-6)}\left(\int_{0}^{r}t^{5}\overline{u^{-q}}(t)dt-\frac{1}{r^{n-6}}\int_{0}^{r}t^{n-1}\overline{u^{-q}}(t)dt\right) \\
&\quad +\frac{1}{4n(n-2)}\left(\frac{1}{(n-4)r^{n-4}}\int_{0}^{r}t^{n+1}\overline{u^{-q}}(t)dt
-\frac{1}{2(n+2)r^{n-2}}\int_{0}^{r}t^{n+3}\overline{u^{-q}}(t)dt\right).
\end{aligned}
\end{equation}
As a consequence, we can rewrite \eqref{f0} into the following form:
\begin{equation}\label{f1}
\bar{u}(r)-\bar{u}(0)=-\frac{n+4}{8n^{2}(n+2)}\bar{v}(r)r^{4}+\frac{\bar{w}(r)}{2n}r^{2}+\Gamma(r),
\end{equation}
where
\begin{equation}\label{f2}
\begin{aligned}
&\Gamma(r):=\frac{1}{8(n-2)}\left(\frac{1}{(n-6)(n-4)}\int_{0}^{r}t^{5}\overline{u^{-q}}(t)dt
-\frac{1}{n(n+2)r^{n-2}}\int_{0}^{r}t^{n+3}\overline{u^{-q}}(t)dt\right) \\
&\qquad +\frac{1}{2n^{2}}\left(\frac{1}{(n-4)r^{n-4}}\int_{0}^{r}t^{n+1}\overline{u^{-q}}(t)dt
-\frac{n+3}{(n+2)(n-6)r^{n-6}}\int_{0}^{r}t^{n-1}\overline{u^{-q}}(t)dt\right).
\end{aligned}
\end{equation}
Note that by equation \ref{PDE}, $\bar{v}'(r)>0$ and hence $\bar{v}(r)$ is strictly increasing with respect to $r$ and $\lim\limits_{r\rightarrow+\infty}\bar{v}(r)\in(-\infty,+\infty]$. Now suppose $q>\frac{1}{2}$ and $u$ is a radially symmetric positive entire solution satisfying $\liminf\limits_{|x|\rightarrow+\infty}\frac{u(x)}{|x|^{4}}\in(0,+\infty]$, by L'Hopital's rule, we have, for any fixed $\delta\in(0,2q-1)$ small,
\begin{equation}\label{f6}
  \lim_{r\rightarrow+\infty}r^{1+\delta}\frac{\int_{0}^{r}t^{n-1}u^{-q}(t)dt}{r^{n-1}}
  =\frac{1}{n-2-\delta}\lim_{r\rightarrow+\infty}r^{2+\delta}u^{-q}(r)\lee C\lim_{r\rightarrow+\infty}r^{1-2q}=0,
\end{equation}
and hence
\begin{equation}\label{f7}
  \lim_{r\rightarrow+\infty}v(r)=v(0)+\lim_{r\rightarrow+\infty}\int_{0}^{r}\frac{\int_{0}^{t}s^{n-1}u^{-q}(s)ds}{t^{n-1}}dt<+\infty.
\end{equation}
By L'Hopital's rule, we obtain
\begin{equation}\label{f3}
  0\lee \lim_{r\rightarrow+\infty}\left|\frac{\Gamma(r)}{r^{4}}\right|\lee C_{n}\lim_{r\rightarrow+\infty}r^{2}u^{-q}(r)\lee C\lim_{r\rightarrow+\infty}r^{2-4q}=0,
\end{equation}
\begin{equation}\label{f4}
  \lim_{r\rightarrow+\infty}\frac{w(r)}{r^{2}}=\lim_{r\rightarrow+\infty}\frac{w'(r)}{2r}=\lim_{r\rightarrow+\infty}\frac{\int_{0}^{r}t^{n-1}v(t)dt}{2r^{n}}
  =\frac{1}{2n}\lim_{r\rightarrow+\infty}v(r)\in(-\infty,+\infty).
\end{equation}
Combining \eqref{f3}, \eqref{f4} with \eqref{f1}, we derive
\begin{equation}\label{f5}
  \lim_{r\rightarrow+\infty}\frac{u(r)}{r^{4}}=\frac{1}{8n(n+2)}\lim_{r\rightarrow+\infty}v(r).
\end{equation}
Therefore, we can infer from $\liminf\limits_{|x|\rightarrow+\infty}\frac{u(x)}{|x|^{4}}\in(0,+\infty]$, \eqref{f7} and \eqref{f5} that
\begin{equation}\label{f8}
  \lim_{r\rightarrow+\infty}\frac{u(r)}{r^{4}}=\frac{1}{8n(n+2)}\lim_{r\rightarrow+\infty}\Delta^{2}u(r)\in(0,+\infty).
\end{equation}

\medskip

(ii) Conversely, assume $n\gee 2$, $q>0$ and the positive radially symmetric entire solution $u$ satisfies $\lim\limits_{r\rightarrow+\infty}\Delta^{2}u(r)\in(0,+\infty]$, then by integrating twice, one gets $w(r)\gee cr^{2}$ for some constant $c>0$ and all $r$ large enough. By integrating twice again, we arrive at $u(r)\gee cr^{4}$ for $r$ large enough, that is, $\liminf\limits_{|x|\rightarrow+\infty}\frac{u(x)}{|x|^{4}}\in(0,+\infty]$. From (i), we get, if $n\gee 3$ but $n\neq4, 6$ and $q>\frac{1}{2}$, then $\lim\limits_{|x|\rightarrow+\infty}\frac{u(x)}{|x|^{4}}=\frac{1}{8n(n+2)}\lim\limits_{r\rightarrow+\infty}\Delta^{2}u(r)\in(0,+\infty)$.

Next, suppose $n\gee 2$, $q>0$ and the positive radially symmetric entire solution $u$ satisfies $\lim\limits_{|x|\rightarrow+\infty}\Delta^{2}u(x)=0$, it follows immediately from $v'(r)>0$ and Lemma 3.3 in \cite{NNPY} that $v(r)<0$ and $w(r)>0$ for any $r\gee 0$. Thus the sub poly-harmonic property holds. By the upper bound estimate \eqref{7-3a}, we know $u(r)\lee Cr^{2}$ for all $r$ large enough, that means $u$ has at most quadratic growth at $\infty$. This concludes our proof of Theorem \ref{thm10}.
\end{proof}

This concludes our proof of Theorem \ref{thm1}.

\section{2D bi-harmonic equations with negative exponents}

In this section, we will prove the nonexistence of positive solutions to the 2D bi-harmonic equation \eqref{PDE}, i.e.,  Theorem \ref{thm0}.

From Theorem 1 in \cite{N}, we can derive the following sub poly-harmonic property for $C^{4}$ positive solution $u$ to the 2D bi-harmonic equation \eqref{PDE}.
\begin{lem}\label{lem1}
Assume $n=2$, $m=2$ and $q>0$. If $u\in C^{4}(\mathbb{R}^{2})$ is a positive entire solution in $\mathbb{R}^{2}$ to \eqref{PDE}, then $\Delta u>0$ in $\mathbb{R}^{2}$.
\end{lem}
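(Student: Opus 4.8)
The plan is to pass to spherical averages and turn the statement into a one-dimensional ODE argument in which the positivity of $u$ does all the work.

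First I would fix an arbitrary center $x_0\in\mathbb{R}^2$ and introduce the spherical averages $\bar u(r)$ and $\bar w(r)$ of $u$ and of $w:=\Delta u$ over $\partial B_r(x_0)$. Since averaging commutes with $\Delta$, these satisfy $\Delta\bar u(r)=\bar w(r)$ and $\Delta\bar w(r)=\overline{\Delta^2 u}(r)=-\overline{u^{-q}}(r)$, where in two dimensions $\Delta f(r)=\frac1r\big(rf'(r)\big)'$. Lemma~\ref{lem0} (Jensen's inequality for the convex map $t\mapsto t^{-q}$) gives $\overline{u^{-q}}(r)\gee\bar u(r)^{-q}>0$, so $\big(r\bar w'(r)\big)'=r\,\Delta\bar w(r)<0$ for $r>0$. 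Using $r\bar w'(r)\to0$ as $r\to0^+$ and integrating, I get $\bar w'(r)<0$, i.e. $\bar w$ is strictly decreasing; in particular $\bar w(r)<\bar w(0)=\Delta u(x_0)$ for all $r>0$.

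The heart of the argument is then a contradiction: suppose $\Delta u(x_0)\lee0$. Then $\bar w(r)<0$ for every $r>0$, and by monotonicity $\bar w(r)\lee-c_0<0$ for $r\gee r_0$, with $c_0:=-\bar w(r_0)$. From $\big(r\bar u'(r)\big)'=r\bar w(r)$ and $\bar u'(0)=0$ I obtain $r\bar u'(r)=\int_0^r t\bar w(t)\,dt$, and splitting at $r_0$ (where $\bar w<0$ on $(0,r_0]$) yields $r\bar u'(r)\lee-c_0\frac{r^2-r_0^2}{2}$ for $r\gee r_0$. Dividing by $r$ and integrating from $r_0$ to $R$ forces $\bar u(R)\to-\infty$ as $R\to+\infty$, contradicting $\bar u(R)>0$, which holds since $u>0$. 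Hence $\Delta u(x_0)>0$, and as $x_0$ is arbitrary, $\Delta u>0$ on all of $\mathbb{R}^2$.

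I expect the only place requiring care is this last step: one must observe that a single nonpositive value of $\Delta u$ propagates, through the strict monotonicity of $\bar w$, to a uniform negative bound at large radii, which then drags $\bar u$ down to $-\infty$ — it is precisely here that the positivity of $u$ enters. (The same computation goes through verbatim with the weight $r^{n-1}$ in place of $r$, so the conclusion in fact holds in every dimension; in 2D one may alternatively quote Theorem~1 of \cite{N}, since for positive $u$ the hypothesis $\limsup_{|x|\to\infty}(-u(x))/|x|^2\lee0$ required there is automatically satisfied.)
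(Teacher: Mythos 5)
Your argument is correct, and it is genuinely more self-contained than what the paper does: for this lemma the paper offers no proof at all, but simply invokes Theorem~1 of \cite{N} (the sub poly-harmonic property for solutions of $(-\Delta)^m u<0$). Your re-centering argument --- average over $\partial B_r(x_0)$ for an arbitrary $x_0$, deduce $\bar w'<0$ from $(r\bar w')'=r\,\overline{\Delta^2u}<0$, and then show that $\Delta u(x_0)\lee 0$ would force $\bar w(r)\lee -c_0<0$ for large $r$ and hence $\bar u(R)\to-\infty$, contradicting $u>0$ --- is exactly the standard mechanism behind the cited result, so in substance you are reproving the relevant special case of \cite{N} rather than quoting it. Each step checks out: $\bar w(0)=\Delta u(x_0)$, the strict monotonicity of $\bar w$ correctly upgrades a single nonpositive value to a uniform negative bound $\bar w(r_0)=-c_0<0$, and the double integration of $(r\bar u')'=r\bar w$ does drive $\bar u$ to $-\infty$. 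Your closing remarks are also accurate: the computation is dimension-independent (replace $r$ by $r^{n-1}$), and if one prefers to cite \cite{N} the growth hypothesis $\limsup_{|x|\to\infty}(-u(x))/|x|^2\lee 0$ there is vacuous for positive $u$, which is precisely why the paper can use the citation without further comment. The only trade-off is length: the paper's one-line citation is shorter, while your version makes the lemma independent of \cite{N} and makes explicit where the positivity of $u$ is used.
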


Now we define $w:=\Delta u$ and
\begin{equation}\label{3}
\bar{u}(r):=\ \ \  -\kern-20.5pt\int\limits_{\p B_{r}(0)} u\,d\sigma, \qquad \bar{w}(r):=\ \ \  -\kern-20.5pt\int\limits_{\p B_{r}(0)}\Delta u\,d\sigma, \qquad \forall \, r\gee0.
\end{equation}
Recall that, in $\mathbb{R}^{2}$, for any radially symmetric function $f(r)$, $\Delta f(r)=\frac{1}{r}\left(rf'\right)'$. It can be deduced from Lemma \ref{lem0} that $\bar{u}(r)$ and $\bar{w}(r)$ satisfy
\begin{align}\label{5}
\left\{
\begin{aligned}
&\Delta\bar{u}(r)=\bar{w}(r), \quad \forall \, r\gee 0, \\
&\Delta\bar{w}(r)+{\bar{u}}^{-q}(r)\lee 0, \quad \forall \, r\gee 0.
\end{aligned}
\right.
\end{align}

We have the following lemma.
\begin{lem}\label{lem2}
Assume $n=2$, $m=2$ and $q>0$. If $u\in C^{4}(\mathbb{R}^{2})$ is a positive entire solution in $\mathbb{R}^{2}$ to \eqref{PDE}, then for all $r>0$,
\begin{equation}\label{8}
\bar{u}'(r)>0, \quad \bar{u}''(r)>0, \quad \bar{u}'''(r)<0,
\end{equation}
\begin{equation}\label{9}
\bar{w}'(r)<0.
\end{equation}
\end{lem}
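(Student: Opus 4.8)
The plan is to squeeze everything out of the two averaged relations recorded in \eqref{5}, namely $\Delta\bar{u}(r)=\bar{w}(r)$ and $\Delta\bar{w}(r)\lee-\bar{u}^{-q}(r)<0$, together with the pointwise sign $\Delta u>0$ from Lemma \ref{lem1} (which yields $\bar{w}(r)>0$ for every $r\gee0$), and to run an ODE bootstrap in the same spirit as the $5$D argument in Lemma \ref{lem10}. Throughout I will use that for a radial $C^{2}$ function $f$ one has $\Delta f(r)=f''(r)+\tfrac1r f'(r)=\tfrac1r\big(rf'(r)\big)'$ in $\mathbb{R}^{2}$, and that $\bar{u}'(0)=\bar{u}'''(0)=\bar{w}'(0)=0$ by the smoothness of $u$.

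First I would establish \eqref{9}: multiplying the second relation in \eqref{5} by $r$ gives $\big(r\bar{w}'(r)\big)'=r\,\Delta\bar{w}(r)<0$, and since $r\bar{w}'(r)\to0$ as $r\to0^{+}$, integration yields $r\bar{w}'(r)<0$, hence $\bar{w}'(r)<0$ for $r>0$. Next, multiplying the first relation in \eqref{5} by $r$ and using $\bar{w}>0$ gives $\big(r\bar{u}'(r)\big)'=r\bar{w}(r)>0$, so $r\bar{u}'(r)=\int_{0}^{r}t\bar{w}(t)\,dt>0$; this proves $\bar{u}'(r)>0$ and records the identities $\bar{u}'(r)=\tfrac1r\int_{0}^{r}t\bar{w}(t)\,dt$ and, via the first relation in \eqref{5},
\[
\bar{u}''(r)=\bar{w}(r)-\frac{1}{r^{2}}\int_{0}^{r}t\bar{w}(t)\,dt .
\]

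The heart of the matter is the sign of $\bar{u}'''$. Differentiating the displayed identity and multiplying by $r^{3}$, one is led to the auxiliary function
\[
G(r):=r^{3}\bar{u}'''(r)=r^{3}\bar{w}'(r)-r^{2}\bar{w}(r)+2\int_{0}^{r}t\bar{w}(t)\,dt ,
\]
which satisfies $G(0)=0$ and, after the cancellations, $G'(r)=r^{2}\big(2\bar{w}'(r)+r\bar{w}''(r)\big)$. Since $\Delta\bar{w}(r)=\bar{w}''(r)+\tfrac1r\bar{w}'(r)<0$ forces $r\bar{w}''(r)<-\bar{w}'(r)$, we get $2\bar{w}'(r)+r\bar{w}''(r)<\bar{w}'(r)<0$ by the previous step; hence $G'(r)<0$ on $(0,+\infty)$ and therefore $G(r)<G(0)=0$, i.e. $\bar{u}'''(r)<0$ for all $r>0$. (Equivalently one may use the exact identity $\Delta\bar{w}=-\overline{u^{-q}}$, immediate from $\Delta^{2}u=-u^{-q}$, which gives $G'(r)=r^{2}\big(\bar{w}'(r)-r\,\overline{u^{-q}}(r)\big)$, again manifestly negative.)

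Finally, $\bar{u}'''<0$ means $\bar{u}''$ is strictly decreasing on $(0,+\infty)$, so if $\bar{u}''(r_{0})\lee0$ for some $r_{0}>0$ then, choosing any $r_{1}>r_{0}$, we have $\bar{u}''(r_{1})<0$ and $\bar{u}''(r)\lee\bar{u}''(r_{1})$ for $r\gee r_{1}$, whence $\bar{u}'(r)\lee\bar{u}'(r_{1})+\bar{u}''(r_{1})(r-r_{1})\to-\infty$, contradicting $\bar{u}'>0$. Thus $\bar{u}''(r)>0$ for all $r>0$, completing \eqref{8}. I expect the only genuinely non-routine point to be the choice of the weight $r^{3}$ in $G$: it is forced by the requirements that $G$ vanish at the origin and that $G'$ telescope down to a manifestly negative expression; the rest is a standard integration-by-parts bootstrap.
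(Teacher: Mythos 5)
Your proposal is correct and follows essentially the same route as the paper: the crucial step in both is the observation that $\bigl(r^{3}\bar{u}'''(r)\bigr)'=r^{3}\bigl(\Delta^{2}\bar{u}(r)+\tfrac{1}{r}\bar{w}'(r)\bigr)<0$ (your computation $G'(r)=r^{2}\bigl(2\bar{w}'(r)+r\bar{w}''(r)\bigr)$ is exactly this identity in disguise), after which integration from $0$ gives $\bar{u}'''<0$ and the remaining signs follow by the same integrations and the same contradiction argument for $\bar{u}''>0$. The only difference is cosmetic: you derive the key identity by differentiating the explicit formula for $\bar{u}''$, whereas the paper writes out $\Delta^{2}\bar{u}$ and $\bar{w}'$ in terms of derivatives of $\bar{u}$ and adds them.
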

\begin{proof}
We multiply the inequality in \eqref{5} by $r$ and integrate the resulting equation to get
\begin{equation}\label{e12}
r\bar{w}'(r) + \int_{0}^{r} t{\bar{u}}^{-q}dt \lee 0.
\end{equation}
Then \eqref{9} follows immediately. Similarly, from the facts that $w>0$ and $(r\bar{u}')'=r\bar{w}$, we can deduce by integrating the first inequality in \eqref{8}.

Next, since we have
\begin{align}\label{10}
{\Delta}^{2}\bar{u}(r) &={\bar{u}}^{(4)}+\frac{2}{r}{\bar{u}}^{(3)}-\frac{1}{r^{2}}{\bar{u}}^{(2)}+\frac{1}{r^{3}}\bar{u}' \\
&=\frac{1}{r^{2}}(r^{2}{\bar{u}}^{(3)})'-\frac{1}{r^{2}}{\bar{u}}^{(2)}+\frac{1}{r^{3}}\bar{u}'\notag\\
&<0 \notag
\end{align}
and
\begin{equation}\label{11}
\bar{w}'(r)={\bar{u}}^{(3)}+\frac{1}{r}{\bar{u}}^{(2)}-\frac{1}{r^{2}}\bar{u}'<0,
\end{equation}
it follows that
\begin{equation}\label{12}
{\Delta}^{2}\bar{u}(r)+\frac{1}{r}\bar{w}'(r)=\frac{1}{r^{3}}(r^{3}{\bar{u}}^{(3)})'<0.
\end{equation}
Hence $(r^{3}{\bar{u}}^{(3)})'<0$ for all $r>0$. An integration gives the third inequality in \eqref{8}. This implies that ${\bar{u}}^{(2)}(r)$ strictly decreases with respect to $r\in(0,+\infty)$. Suppose that the second inequality in \eqref{8} does not hold, then there exists a $r_{0}>0$ such that ${\bar{u}}^{(2)}(r_{0})\lee0$. Then there exist $\delta>0$ and $r_{1}>r_{0}$ such that for any $r\gee r_{1}$, ${\bar{u}}^{(2)}(r)\lee -\delta$. An integration yields
\begin{equation}\label{e13}
\bar{u}'(r)-\bar{u}'(r_{1}) \lee -\delta(r-r_{1})
\end{equation}
for any $r \gee r_{1}$. But this implies $\bar{u}'<0$ for $r>0$ large enough, which contradicts the first inequality in \eqref{8}. Hence the second inequality in \eqref{8} holds. This finishes our proof of Lemma \ref{lem2}.
\end{proof}

By Lemma \ref{lem2}, we have $\bar{w}'(r)<0$ for any $r>0$, and hence $\bar{w}(r)=\Delta\bar{u}(r)\lee \bar{w}(0)=w(0)=\Delta u(0)$. Now by integrating again, we arrive at
\begin{equation}\label{7}
\bar{u}(r)\lee \bar{u}(0)+\frac{\bar{w}(0)}{4}r^{2}=u(0)+\frac{\Delta u(0)}{4}r^{2}, \qquad \forall \, r\gee 0.
\end{equation}
Consequently, any $C^{4}$ positive entire solution $u$ to the 2D bi-harmonic equation \eqref{PDE} must satisfy
\begin{equation}\label{e11}
  \liminf_{|x|\rightarrow+\infty}\frac{u(x)}{|x|^{2}}\lee\frac{\Delta u(0)}{4}<+\infty.
\end{equation}

We will prove the following comparison theorem in 2 dimension, which plays a crucial role in the proof of Theorem \ref{thm0} and is quite interesting itself. For more comparison results on quasi-monotone ODE systems, please refer to Walter [8] (cf. \cite{DFG,MR} for 3D comparison theorems, see also \cite{FF}).
\begin{thm}[2D comparison theorem]\label{thm2}
Assume the spatial dimension $n=2$ and $q>0$. If radially symmetric functions $U, \, V \in C^{4}(\mathbb{R}^{2})$ satisfy $U>0$, $V>0$ and
\begin{align}
\left\{
\begin{aligned}
&\Delta^{2}(U-V)(r) \gee V^{-q}(r)-U^{-q}(r),\ \ \ \   \forall \, r \gee 0,\\
&U(0) = V(0), \\
&U^{(k)}(0) = V^{(k)}(0),\ \ \ \ k=1,2,3.
\end{aligned}
\right.
\end{align}
Then we have $U(r)\gee V(r)$ for all r $\gee 0$. Furthermore, if $\Delta^{2}(U-V)(0)>0$, then
\begin{equation}
U(r)>V(r),\ \ \ \ \forall\ r>0.
\end{equation}
\end{thm}
\begin{proof}
By direct calculations, one has, for any $r>0$,
\begin{equation}
{\Delta}^{2}(U-V)(r)={(U-V)}^{(4)}(r)+\frac{2}{r}{(U-V)}^{(3)}(r)-\frac{1}{r^{2}}{(U-V)}^{(2)}(r)+\frac{1}{r^{3}}(U-V)'(r).
\end{equation}
Thus we have
\begin{align}
0&=V^{-q}(0)-U^{-q}(0)\\
\nonumber &\lee {\Delta}^{2}(U-V)(0)\\
\nonumber &= \lim_{r\rt 0}{\Delta}^{2}(U-V)(r)\\
\nonumber &=\frac{8}{3}\lim_{r\rt 0}(U-V)^{(4)}(r)\\
\nonumber &=\frac{8}{3}(U-V)^{(4)}(0).
\end{align}
That is, $ (U-V)^{(4)}(0) \gee 0$. For arbitrarily given $\varepsilon \in(0,1)$, we have
\begin{align}
U(r)-V(r)+\varepsilon r^{4}&=\frac{(U-V)^{(4)}(\theta r)}{4!}r^{4}+\varepsilon r^{4} \\
\nonumber &=\left[\frac{(U-V)^{(4)}(\theta r)}{4!}+\varepsilon\right]r^{4}
\end{align}
for some $0<\theta<1$. It follows immediately that there exists a $r(\varepsilon)>0$ small enough such that
\begin{equation}
U(r)-V(r)+\varepsilon r^{4} \gee 0,\ \ \ \ \forall\ 0\lee r\lee r(\varepsilon).
\end{equation}

\smallskip

Now let $r_{*}(\varepsilon):=\sup\{r\ |\ U(s)-V(s)+\varepsilon s^{4} \gee 0,\ \forall\ 0\lee s\lee r\}$, then $r_{*}(\varepsilon)>0$. There exists a $0<r_{0}\lee\min\left\{\sqrt[4]{\frac{V(0)}{4}},2\sqrt[4]{\frac{V(0)^{q+1}}{2^{2q+1}q}}\right\}$ (independent of $\varepsilon$) sufficiently small such that
\begin{equation}
V(r)\gee\frac{V(0)}{2}, \,\quad\, V(r)-\varepsilon r^{4}\gee\frac{V(0)}{4}, \qquad \forall\ 0\lee r\lee r_{0}.
\end{equation}
Therefore, by the Mean Value Theorem, we have, for any $0\lee r\lee \min\{r_{*}(\varepsilon), r_{0}\}=:{\bar{r}}(\varepsilon)$,
\begin{align}\label{e15}
{\Delta}^{2}(U(r)-V(r)+\varepsilon r^{4})&\gee 64\varepsilon+V^{-q}(r)-U^{-q}(r)\\
\nonumber &\gee 64\varepsilon + V^{-q}(r)-\left(V(r)-\varepsilon r^{4}\right)^{-q}\\
\nonumber &=64\varepsilon-q\xi(r)^{-q-1}\varepsilon r^{4}\\
\nonumber &\gee \varepsilon\left[64-q\left(V(r)-\varepsilon r^{4}\right)^{-q-1}r^{4}\right] \\
\nonumber &\gee \varepsilon\left[64-\frac{q4^{q+1}r^{4}}{V(0)^{q+1}}\right]\gee 32\varepsilon>0,
\end{align}
where $\xi(r)\in(V(r)-\varepsilon r^{4},V(r))$. By direct calculations, we also have
\begin{align}\label{e14}
\Delta^{2}(U-V+\varepsilon r^{4})&=\frac{1}{r^{3}}\left(r^{3}(U-V+\varepsilon r^{4})^{(3)}\right)'-\frac{1}{r}\left[\Delta(U-V+\varepsilon r^{4})\right]'(r)\\
\nonumber &=\frac{1}{r}\left(r\left[\Delta(U-V+\varepsilon r^{4})\right]'\right)', \qquad \forall \, r>0.
\end{align}
As a consequence, we can infer from \eqref{e15} and \eqref{e14} that
\begin{equation}
[\Delta(U-V+\varepsilon r^{4})]'(r)>0,\ \ \ \ \forall\ r\in(0,\bar{r}(\varepsilon)].
\end{equation}
Combining this with \eqref{e14} implies that
\begin{equation}
(r^{3}(U-V+\varepsilon r^{4})^{(3)})'(r)>0,\ \ \ \ \forall\ r\in(0,\bar{r}(\varepsilon)].
\end{equation}
By integrating, we can deduce further that
\begin{equation}
 (U-V+\varepsilon r^{4})^{(k)}(r)>0,\ \ \ \ k=1,2,3, \quad \forall\ r\in(0,\bar{r}(\varepsilon)],
\end{equation}
and hence
\begin{equation}
(U-V)(\bar{r}(\varepsilon))+\varepsilon \bar{r}^{4}(\varepsilon)>0.
\end{equation}
If $r_{*}(\varepsilon)<r_{0}$, then $\bar{r}(\varepsilon)=r_{*}(\varepsilon)$ and hence $\left(U-V+\varepsilon r^{4}\right)(\bar{r}(\varepsilon))=0$. This is a contradiction. Thus we must have $r_{*}(\varepsilon)\gee r_{0}>0$ for all $\varepsilon \in (0,1)$, that is,
\begin{equation}
U(r)-V(r)+\varepsilon r^{4} \gee 0, \qquad \forall \, 0\lee r\lee r_{0}, \quad \forall \, 0<\varepsilon<1.
\end{equation}

\smallskip

Let $\varepsilon \rt 0+$, then we can obtain $U(r)\gee V(r)$ for any $0\lee r\lee r_{0}$. Define $r_{*}:=\sup\{r\ |\ U(s)\gee V(s),\ \ \forall\ 0\lee s\lee r\},$ then $r_{0}\lee r_{*}\lee +\infty$.

\smallskip

Next we aim to show that $r_{*}=+\infty$. Suppose on the contrary that $r_{*}<+\infty$, then $U(r)\gee V(r)$ for any $0\lee r\lee r_{*}$ and $U(r_{*})=V(r_{*})$. Consequently, from
\begin{align}\label{e16}
0&\lee V^{-q}(r)-U^{-q}(r)\lee \Delta^{2}(U-V)(r)\\
\nonumber &=\frac{1}{r^{3}}\left(r^{3}(U-V)^{(3)}\right)'-\frac{1}{r}\left[\Delta(U-V)\right]'(r)\\
\nonumber &=\frac{1}{r}\left(r\left[\Delta(U-V)\right]'\right)', \qquad \forall\ 0\lee r\lee r_{*},
\end{align}
we can derive
\begin{equation}\label{e17}
[\Delta(U-V)]'\gee 0, \qquad \forall\ 0\lee r\lee r_{*}.
\end{equation}
Combining \eqref{e16} and \eqref{e17} yields that
\begin{equation}
(r^{3}(U-V)^{(3)})'(r)\gee 0, \qquad \forall\ 0\lee r\lee r_{*},
\end{equation}
and hence
\begin{equation}\label{e18}
(U-V)^{(k)}(r)\gee 0, \qquad \forall\ 0\lee r\lee r_{*},\ \ \ k=1,2,3.
\end{equation}
It follows from \eqref{18} and $U(r_{*})=V(r_{*})$ that
\begin{equation}
U(r)\equiv V(r), \qquad \forall\ 0\lee r\lee r_{*},
\end{equation}
and hence
\begin{equation}
(U-V)^{(k)}(r)\equiv 0, \ \ \ \ \forall\ 0\lee r\lee r_{*},\ \ \ k=1,2,3.
\end{equation}
Therefore, by entirely similar perturbation method as above, we deduce that there exists a $\tilde{r}_{0}>0$ depending only on $V(r_{*})$ and the function $V(r)$ such that $U(r)\gee V(r)$ for any $r \in [r_{*},r_{*}+\tilde{r}_{0}]$, this contradicts the definition of $r_{*}$. Thus we must have $r_{*}=+\infty$, that is, $U(r)\gee V(r)$ for any $r\gee 0$.

\medskip

Furthermore, if $\Delta^{2}(U-V)(0)>0$, then one has
\begin{equation}
0<\Delta^{2}(U-V)(0)
=\lim_{r\rt 0}{\Delta}^{2}(U-V)(r)
=\frac{8}{3}\lim_{r\rt 0}(U-V)^{(4)}(r)
=\frac{8}{3}(U-V)^{(4)}(0).
\end{equation}
Thus there exists an $\eta_{0}>0$ small enough such that $(U-V)^{(4)}(r)>0$ for all $0\lee r<\eta_{0}$. Therefore, $U(r)-V(r)=\frac{(U-V)^{(4)}(\theta r)}{4!}r^{4}>0$ for any $0<r\lee \eta_{0}$, where $\theta\in(0,1)$.

Set $\bar{r}_{*}:=\sup\{r\ |\ U(s)>V(s),\, \forall\ 0<s\lee r\}$, then $\eta_{0}\lee \bar{r}_{*}\lee +\infty$. Next we will prove $\bar{r}_{*}=+\infty$. Suppose on the contrary that $\bar{r}_{*}< +\infty$, then $U(r)> V(r)$ for all $0<r< \bar{r}_{*}$ and $U(\bar{r}_{*})=V(\bar{r}_{*})$. For any $0< r< \bar{r}_{*}$, $U$ and $V$ satisfy
\begin{align}\label{e19}
0&< V^{-q}(r)-U^{-q}(r)\lee \Delta^{2}\left(U-V\right)(r)\\
\nonumber &=\frac{1}{r^{3}}\left(r^{3}(U-V)^{(3)}\right)'-\frac{1}{r}\left[\Delta\left(U-V\right)\right]'(r)\\
\nonumber &=\frac{1}{r}\left(r\left[\Delta\left(U-V\right)\right]'\right)',
\end{align}
thus we can obtain
\begin{equation}\label{e20}
\left[\Delta(U-V)\right]'(r)> 0, \qquad \forall\ 0< r<\bar{r}_{*}.
\end{equation}
Combining \eqref{e19} and \eqref{e20} implies that
\begin{equation}
\left(r^{3}(U-V)^{(3)}\right)'(r)> 0, \qquad \forall\ 0< r< \bar{r}_{*},
\end{equation}
and hence
\begin{equation}
(U-V)^{(k)}(r)>0, \qquad \forall\ 0< r< \bar{r}_{*},\ \ \ k=1,2,3.
\end{equation}
It follows immediately that
\begin{equation}
(U-V)(\bar{r}_{*})>0,
\end{equation}
which contradicts the definition of $\bar{r}_{*}$. Thus we have proved $\bar{r}_{*}=+\infty$, that is, $U(r)> V(r)$ for any $r>0$. This concludes our proof of Theorem \ref{thm2}.
\end{proof}

We can first derive the following necessary condition on the existence of positive entire solution to the 2D bi-harmonic equation \eqref{PDE}.
\begin{thm}\label{thm3}
Assume $n=2$, $m=2$ and $q>0$. If \eqref{PDE} admits a $C^{4}$ positive solution on entire $\mathbb{R}^{2}$, then $q>1$.
\end{thm}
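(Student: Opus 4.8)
The plan is to argue by contradiction. Assume that \eqref{PDE} (with $n=m=2$) admits a $C^{4}$ positive entire solution $u$ and that $0<q\lee 1$; I will show that the spherical average $\bar w(r)=\overline{\Delta u}(r)$ must tend to $-\infty$ as $r\to+\infty$, which is impossible because $\Delta u>0$ in $\mathbb{R}^{2}$ by Lemma \ref{lem1}. Heuristically: once we know (unconditionally) that $u$ grows at most quadratically, the source term $u^{-q}$ decays no faster than $|x|^{-2q}$, and for $q\lee 1$ this is too slow for $\Delta^{2}u=-u^{-q}$ to be compatible with $\Delta u$ staying positive.

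First I would record the two ingredients already available. Integrating the monotonicity relations \eqref{8} and \eqref{9} of Lemma \ref{lem2} yields the quadratic upper bound \eqref{7}:
\begin{equation*}
 \bar u(r)\ \lee\ u(0)+\frac{\Delta u(0)}{4}\,r^{2}\ =:\ A+Br^{2},\qquad \forall\, r\gee 0,
\end{equation*}
where $A=u(0)>0$ and $B=\Delta u(0)/4>0$ (positivity of $B$ being exactly Lemma \ref{lem1}). Hence $\bar u(r)\lee (A+B)r^{2}$ for $r\gee 1$, so that $\bar u^{-q}(r)\gee (A+B)^{-q}r^{-2q}$ for all $r\gee 1$. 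The second ingredient is inequality \eqref{e12} from the proof of Lemma \ref{lem2} (which itself follows from Jensen's inequality, Lemma \ref{lem0}, applied to the second line of \eqref{5}):
\begin{equation*}
 r\,\bar w'(r)\ \lee\ -\int_{0}^{r} t\,\bar u^{-q}(t)\,dt,\qquad \forall\, r>0.
\end{equation*}

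Next I would combine the two. For $r\gee 1$ we have $\int_{0}^{r} t\,\bar u^{-q}(t)\,dt\gee (A+B)^{-q}\int_{1}^{r} t^{\,1-2q}\,dt$, and since $1-2q\gee -1$ whenever $q\lee 1$, this is bounded below by $c_{1}r^{\,2-2q}-c_{2}$ when $q<1$ (with $2-2q>0$) and by $c_{1}\log r-c_{2}$ when $q=1$, for constants $c_{1}>0$, $c_{2}\gee 0$. Dividing by $r$ and integrating once more from a large $r_{0}$ to $r$, one obtains
\begin{equation*}
 \bar w(r)\ \lee\ \bar w(r_{0})-c_{3}\big(r^{\,2-2q}-r_{0}^{\,2-2q}\big)\ \ (q<1),\qquad \bar w(r)\ \lee\ \bar w(r_{0})-\tfrac{c_{3}}{2}\big((\log r)^{2}-(\log r_{0})^{2}\big)\ \ (q=1),
\end{equation*}
with $c_{3}>0$. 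In both cases $\bar w(r)\to-\infty$, contradicting $\bar w(r)=\overline{\Delta u}(r)>0$ for all $r\gee 0$ (Lemma \ref{lem1}). Hence $q>1$.

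The argument is short; the only delicate point is the borderline exponent $q=1$, where $\int_{1}^{r}t^{\,1-2q}\,dt$ produces a logarithm rather than a power, and one must check that the second integration still diverges — it does, yielding $(\log r)^{2}$. A minor remark: the quadratic bound \eqref{7} is unconditional for $C^{4}$ positive entire solutions in $\mathbb{R}^{2}$, so no growth hypothesis on $u$ is needed; moreover, even the weaker bound $\bar u(r)\lee A+Br^{2}$ with $B\gee 0$ would suffice, since if $\bar u$ were bounded the integral $\int_{0}^{r}t\,\bar u^{-q}(t)\,dt$ would diverge even faster.
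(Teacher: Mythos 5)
Your argument is correct, and it reaches the conclusion by a genuinely shorter route than the paper. The paper splits the proof in two: it first establishes the lower bound $\bar u(r)\gee \big(\tfrac{\ln 2}{8}\big)^{1/(q+1)}r^{4/(q+1)}$ (by combining \eqref{13} with \eqref{14}) and compares it with the quadratic upper bound \eqref{7} to conclude $q\gee 1$; it then excludes the borderline case $q=1$ by a much heavier argument --- the 2D comparison theorem (Theorem \ref{thm2}) is invoked to sandwich $\bar u$ between a radial solution $U$ of the exact initial value problem and a quadratic super-solution, and a Rellich--Pohozaev type identity \eqref{e23} for $U$ is driven to the contradiction $-\infty+\tfrac12=0$. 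You instead integrate the differential inequality for $\bar w$ twice: from \eqref{e12} together with the unconditional quadratic upper bound \eqref{7} you get $r\bar w'(r)\lee -c\int_1^r t^{1-2q}\,dt$, and since $1-2q\gee -1$ exactly when $q\lee 1$, a second integration forces $\bar w(r)\to-\infty$ (like $-r^{2-2q}$ for $q<1$ and like $-(\log r)^2$ for $q=1$), contradicting $\Delta u>0$ from Lemma \ref{lem1}. This handles $q<1$ and $q=1$ uniformly and dispenses with the comparison theorem and the Pohozaev identity entirely; the only thing it does not produce is the intermediate lower bound \eqref{15}, which the paper's route yields as a by-product (and which is in any case superseded later by Lemma \ref{lem3}). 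Your handling of the two delicate points --- the logarithmic borderline $q=1$ and the lower-order $c_2/r$ correction absorbed by the dominant divergent term --- is sound.
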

\begin{proof}
Suppose $u\in C^{4}(\mathbb{R}^{2})$ is a positive entire solution to the bi-harmonic equation \eqref{PDE}. Using the first equation in \eqref{5} and \eqref{9}, we get
\begin{equation}\label{e21}
r\bar{u}'(r)=\int_{0}^{r}t\bar{w}(t)dt \gee \frac{1}{2}r^{2}\bar{w}(r), \qquad \forall \, r\gee 0.
\end{equation}
By dividing \eqref{e21} by $r$ and integrating once again, we get
\begin{equation}\label{13}
\bar{u}(r) \gee u(0) + \frac{1}{4}r^{2}\bar{w}(r), \qquad \forall \, r\gee 0.
\end{equation}

On the other hand, we can also infer from \eqref{PDE}, Lemma \ref{lem0}, Lemma \ref{lem1} and \eqref{8} that
\begin{align}\label{14}
\bar{w}(r)& = \bar{w}(2r)-\int_{r}^{2r}\bar{w}'(t)dt \\
& = \bar{w}(2r)-\frac{1}{2\pi}\int_{r}^{2r}t^{-1}\int_{B_{t}(0)}\Delta{w}dxdt \notag\\
& = \bar{w}(2r)+\frac{1}{2\pi}\int_{r}^{2r}t^{-1}\int_{B_{t}(0)}u^{-q}(x)dxdt \notag\\
& \gee \frac{\ln2}{2\pi}\int_{B_{r}(0)}u^{-q}(x)dx \notag\\
& = \ln 2\int_{0}^{r}t -\kern-13pt\int_{\partial B_{t}(0)}u^{-q}(x)d{\sigma}_{x}dt \notag\\
& \gee \ln 2\int_{0}^{r}t{\bar{u}}^{-q}(t)dt \notag\\
& \gee \frac{\ln 2}{2}r^{2}{\bar{u}}^{-q}(r). \notag
\end{align}
By combining \eqref{14} with \eqref{13}, we conclude that
\begin{equation}
\bar{u}(r) \gee u(0) + \frac{\ln 2}{8}r^{4}{\bar{u}}^{-q}(r),
\end{equation}
and hence
\begin{equation}\label{15}
\bar{u}(r)\gee {\Big{(}\frac{\ln 2}{8}\Big{)}}^{\frac{1}{q+1}}r^{\frac{4}{q+1}}.
\end{equation}
Combining this lower bound \eqref{15} with the upper bound \eqref{7} implies that $4\lee 2(q+1)$. Hence we must have $q\gee 1$.

\medskip

Now suppose that $q=1$ and $u\in C^{4}(\mathbb{R}^{2})$ is a positive entire solution to \eqref{PDE}, we will try to obtain a contradiction. Note that $\bar{u}$ is a sub-solution in the sense that it satisfies $\Delta^{2}\bar{u} + \bar{u}^{-1}\lee 0$, $\bar{u}'(0)=0$ and $\bar{u}'''(0)=0$. Then we define the radially symmetric quadratic function $z(r):=u(0)+\frac{\bar{u}''(0)}{2}r^{2}$. It is a super-solution as it satisfies $\Delta^{2}z+z^{-1}\gee 0$, $z'(0)=0$ and $z'''(0)=0$. Theorem \ref{thm2} (2D comparison theorem) ensures that $z(r)>\bar{u}(r)$ for any $r>0$. Consequently, the radially symmetric solution $U$ of the initial value problem $\Delta^{2}U+U^{-1}=0$ with $U(0)=u(0)$, $U'(0)=0$, $U''(0)=\bar{u}''(0)$ and $U'''(0)=0$ exists on $[0,+\infty)$ and satisfies $\bar{u}\lee U\lee z$ by Theorem \ref{thm2} (2D comparison theorem).

\medskip

Next, let $W(r):=\Delta U(r)$ for $r\gee 0$. With $q=1$, it is immediate that
\begin{equation}\label{e22}
U(r)(rW'(r))'+r=0, \qquad \forall \, r\gee0.
\end{equation}
By integrating \eqref{e22} over the interval $(0,r)$ and integration by parts on the first term twice, we derive
\begin{equation}\label{e23}
U(r)rW'(r)-U'(r)rW(r)+\int_{0}^{r}tW^{2}(t)dt+\frac{1}{2}r^{2}=0, \qquad \forall \, r\gee 0.
\end{equation}
Dividing both sides of \eqref{e23} by $r^{2}$ yields that
\begin{equation}\label{16}
\frac{U(r)}{r^{2}}rW'(r)-\frac{U'(r)}{r}W(r)+\frac{1}{r^{2}}\int_{0}^{r}tW^{2}(t)dt+\frac{1}{2}=0, \qquad \forall \, r>0.
\end{equation}
Thanks to Lemma \ref{lem1} and \eqref{9}, we can obtain that $\lim_{r \rt +\infty}W(r)=\alpha\gee 0$. By \eqref{7} and \eqref{14} with $q=1$, we conclude that $\alpha>0$. Hence we have $(rU')'=\alpha r+o(r)$ for large $r$. By using the L'Hopital's rule, we can reach
\begin{equation}\label{e24}
  \lim_{r \rt +\infty}\frac{U'(r)}{r} = \frac{\alpha}{2} \qquad \text{and} \qquad \lim_{r \rt +\infty}\frac{U(r)}{r^{2}} = \frac{\alpha}{4},
\end{equation}
and hence
\begin{equation}\label{e26}
\lim_{r \rt +\infty}rW'(r) = -\lim_{r \rt +\infty}\int_{0}^{r}tU^{-1}(t)dt=-\infty.
\end{equation}
By using the L'Hopital's rule, we can also arrive at
\begin{equation}\label{e25}
  \lim_{r\rightarrow+\infty}\frac{1}{r^{2}}\int_{0}^{r}tW^{2}(t)dt=\lim_{r\rightarrow+\infty}\frac{W^{2}(r)}{2}=\frac{\alpha^{2}}{2}.
\end{equation}
Combining \eqref{e24}, \eqref{e25} and \eqref{e26}, and letting $r\rightarrow+\infty$ in \eqref{16}, we derive $-\infty+\frac{1}{2}=0$, which is absurd. Therefore, we must have $q>1$. This completes our proof of Theorem \ref{thm3}.
\end{proof}

From Theorem \ref{thm3}, we know that \eqref{PDE} admits no positive entire solution $u\in C^{4}(\mathbb{R}^{2})$ provided that $0<q\lee1$. Therefore, we only need to consider the cases $q>1$ in the rest of this section.
\begin{lem}\label{lem4}
Assume $n=2$, $m=2$ and $q>1$. Suppose $u\in C^{4}(\mathbb{R}^{2})$ is a positive solution to \eqref{PDE}, then
\begin{equation}\label{e31}
  \int_{\mathbb{R}^{2}}u^{-q}(x)dx<+\infty,
\end{equation}
and there exists a constant $C_{0}$ such that
\begin{equation}\label{e32}
  \Delta u(x)=\frac{1}{2\pi}\int_{\mathbb{R}^{2}}\ln\left(\frac{|x-y|}{|y|}\right)u^{-q}(y)dy+C_{0}, \qquad \forall \, x\in\mathbb{R}^{2}.
\end{equation}
Consequently, $\Delta u(x)\lee C\ln|x|$ for $|x|$ sufficiently large.
\end{lem}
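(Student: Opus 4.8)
The plan is to obtain \eqref{e31} directly from one-sided bounds on $\bar{w}:=\overline{\Delta u}$ that are already available, to produce \eqref{e32} by subtracting off a renormalized logarithmic potential of $u^{-q}$ and removing the leftover entire harmonic function with a Liouville argument, and finally to read the logarithmic bound on $\Delta u$ off \eqref{e32}.

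\emph{Integrability.} The chain of inequalities \eqref{14} in the proof of Theorem \ref{thm3} gives in particular $\bar{w}(r)\gee\frac{\ln 2}{2\pi}\int_{B_{r}(0)}u^{-q}(x)\,dx$ for every $r>0$ (positivity of $\Delta u$ from Lemma \ref{lem1} being what makes $\bar{w}(2r)\gee 0$ there), while \eqref{9} gives $\bar{w}'(r)<0$ and hence $\bar{w}(r)\lee\bar{w}(0)=\Delta u(0)$. Therefore $\int_{B_{r}(0)}u^{-q}(x)\,dx\lee\frac{2\pi}{\ln 2}\Delta u(0)$ for all $r>0$, and letting $r\rt+\infty$ yields \eqref{e31}.

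\emph{Representation formula.} Put $w:=\Delta u$, so that by \eqref{PDE} one has $\Delta w=\Delta^{2}u=-u^{-q}$ in $\mathbb{R}^{2}$. Since $u^{-q}\in L^{1}(\mathbb{R}^{2})$ by \eqref{e31}, since $\bigl|\ln\frac{|x-y|}{|y|}\bigr|\lee\ln 2$ whenever $|y|\gee 2|x|$, and since $\ln|\cdot|\in L^{1}_{\mathrm{loc}}(\mathbb{R}^{2})$, the renormalized logarithmic potential $N(x):=\frac{1}{2\pi}\int_{\mathbb{R}^{2}}\ln\frac{|x-y|}{|y|}\,u^{-q}(y)\,dy$ is well defined and continuous on $\mathbb{R}^{2}$. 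Representing $N$ on an arbitrary ball as the logarithmic (Newtonian) potential of $u^{-q}$ truncated to a large ball plus a function harmonic on that ball, the classical theory of logarithmic potentials gives $\Delta N=u^{-q}$ in $\mathbb{R}^{2}$; note one must keep the renormalized kernel $\ln\frac{|x-y|}{|y|}$ and localize, since $\int\ln|y|\,u^{-q}(y)\,dy$ need not converge and the naive splitting of $N$ into two integrals is unavailable. Consequently $P:=w+N$ is an entire harmonic function on $\mathbb{R}^{2}$. Now $w=\Delta u>0$ by Lemma \ref{lem1}, and $N$ is bounded from below — it is finite and continuous on all of $\mathbb{R}^{2}$ and $N(x)\rt+\infty$ as $|x|\rt+\infty$ because $\int_{\mathbb{R}^{2}}u^{-q}>0$ — so $P$ is an entire harmonic function bounded below, hence constant (by the Liouville theorem, e.g.\ via Harnack's inequality applied on balls $B_{R}$ with $R\rt+\infty$). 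Calling this constant $C_{0}$, one arrives at the integral representation \eqref{e32} for $\Delta u$. As an internal check, the $\partial B_{r}(0)$-average of \eqref{e32} is $\bar{w}(r)=\Delta u(0)-\frac{1}{2\pi}\int_{B_{r}(0)}\ln\frac{r}{|y|}u^{-q}(y)\,dy$, which also drops straight out by integrating the radial identity $\frac{1}{r}\bigl(r\bar{w}'(r)\bigr)'=-\overline{u^{-q}}(r)$ twice and interchanging the order of integration, once one uses that the mean of $\ln|\cdot-y|$ over $\partial B_{r}(0)$ equals $\ln\max(r,|y|)$.

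\emph{Logarithmic bound and the crux.} From \eqref{e32}, $\Delta u(x)-C_{0}$ is (up to sign) the potential $N(x)$, and splitting the defining integral of $N(x)$ over $\{|y|<|x|/2\}$, $\{|x|/2\lee|y|\lee 2|x|\}$ and $\{|y|>2|x|\}$ and estimating each piece by $u^{-q}\in L^{1}(\mathbb{R}^{2})$ (using also the local integrability of $\ln|x-\cdot|$ on the middle annulus) yields $|N(x)|\lee C(1+\ln|x|)$ for $|x|$ large; hence $\Delta u(x)\lee C\ln|x|$ for $|x|$ sufficiently large. The two steps needing genuine care are the rigorous identity $\Delta N=u^{-q}$ for the renormalized kernel, and — the real obstacle — the Liouville step forcing the harmonic remainder $P=\Delta u+N$ to be constant; this is exactly where Lemma \ref{lem1} is indispensable, because an entire harmonic function in the plane is constant only under a one-sided bound, and here that bound is supplied by $\Delta u>0$.
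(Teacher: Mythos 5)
Your overall route is the same as the paper's: \eqref{e31} from the chain \eqref{14} combined with the monotonicity \eqref{9}, then a renormalized logarithmic potential, a harmonic remainder removed by a Liouville argument, and the logarithmic bound read off at the end. Two remarks, the second of which is a genuine gap.

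First, the sign. Your computation is internally consistent but it yields $\Delta u=C_{0}-N$, not \eqref{e32} as printed. Since $\frac{1}{2\pi}\ln|x-y|$ is the fundamental solution of $+\Delta$ in the plane, one has $\Delta N=+u^{-q}$, while $\Delta(\Delta u)=-u^{-q}$ by \eqref{PDE}; hence it is $\Delta u+N$ that is harmonic, exactly as you say. But then you cannot claim to ``arrive at \eqref{e32}'', which asserts $\Delta u=N+C_{0}$: the two candidate representations differ by $2N$, which is not harmonic, so at most one can hold. (The paper's intermediate assertion ``$\Delta v+u^{-q}=0$'' for the function \eqref{20} carries the sign opposite to yours; your sign is the one consistent with the fundamental solution.) You must state explicitly which identity your argument proves; as written, your derivation and the target formula disagree.

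Second, the gap in the Liouville step. You conclude that $P=\Delta u+N$ is constant because $\Delta u>0$ and ``$N$ is bounded from below, since $N(x)\to+\infty$ as $|x|\to+\infty$ because $\int_{\mathbb{R}^{2}}u^{-q}>0$.'' This lower bound on $N$ is not justified. The near-diagonal contribution $\frac{1}{2\pi}\int_{|x-y|<1}\ln|x-y|\,u^{-q}(y)\,dy$ is negative and is controlled from below only by a quantity of size $[\inf_{B_{1}(x)}u]^{-q}$; at this stage nothing excludes $\inf_{B_{1}(x)}u\to0$ as $|x|\to+\infty$ (the hypothesis \eqref{e34} of Theorem \ref{thm4} and the doubling-lemma argument closing Section 3 exist precisely because $\inf_{\mathbb{R}^{2}}u=0$ is possible), so $N$ may a priori fail to be bounded below, and the same unproved lower bound is invoked again when you assert $|N(x)|\le C(1+\ln|x|)$ in your final step. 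The paper's proof sidesteps this by establishing only the \emph{upper} bound $v(x)\le C\ln|x|$ (display \eqref{21}), for which the near-diagonal logarithmic singularity has the favorable sign and is simply discarded, and then applying the Liouville theorem for entire harmonic functions satisfying a one-sided $o(|x|)$ bound to $\Delta u-v\ge -C\ln|x|$. To repair your argument you should replace the unproved lower bound on $N$ by this unconditional upper bound and use the sublinear one-sided form of the Liouville theorem; note that after correcting the sign this still requires care, since the available unconditional bounds ($\Delta u>0$ from below, $N\le C\ln|x|$ from above) control opposite sides of the two summands of $\Delta u+N$.
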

\begin{proof}
By \eqref{14}, we have
\begin{equation}\label{e27}
  \int_{B_{r}(0)}u^{-q}(x)dx\lee \frac{2\pi}{\ln2}\bar{w}(r)\lee \frac{2\pi}{\ln2}\bar{w}(0)=\frac{2\pi}{\ln2}\Delta u(0), \qquad \forall \, r\gee 0,
\end{equation}
and hence
\begin{equation}\label{e28}
\int_{\mathbb{R}^{2}}u^{-q}(x)dx<+\infty.
\end{equation}
For arbitrarily fixed $x\in\mathbb{R}^{2}$, there exists $R_{x}>0$ sufficiently large such that
\begin{equation}\label{e29}
  \left|\ln\left(\frac{|x-y|}{|y|}\right)\right|\lee 1, \qquad \forall \, |y|\gee R_{x}.
\end{equation}
As a consequence of \eqref{e28} and \eqref{e29}, we can define the auxiliary function
\begin{equation}\label{20}
v(x):=\frac{1}{2\pi}\int_{\mathbb{R}^{2}}\ln\left(\frac{|x-y|}{|y|}\right)u^{-q}(y)dy, \qquad \forall \, x\in\mathbb{R}^{2}.
\end{equation}
It is easy to verify $\Delta v + u^{-q}=0$ in $\mathbb{R}^{2}$. Thus $\Delta\left(\Delta u-v\right)=0$ in $\mathbb{R}^{2}$. By the definition \eqref{20} of $v(x)$, we can infer from \eqref{e28} that, for $|x|$ sufficiently large,
\begin{align}\label{21}
v(x)&\lee\frac{1}{2\pi}\int_{|y|<2|x|}\ln\left(|x-y|\right)u^{-q}(y)dy-\frac{1}{2\pi}\int_{|y|<1}\ln|y|u^{-q}(y)dy \\
&\quad +\frac{1}{2\pi}\int_{|y|\gee 2|x|}\ln\left(\frac{3}{2}\right)u^{-q}(y)dy \notag \\
&\lee \frac{\ln(3|x|)}{2\pi}\int_{\mathbb{R}^{2}}u^{-q}(y)dy+\frac{\pi}{2}\left[\min_{B_{1}(0)}u\right]^{-q}+o_{|x|}(1) \notag \\
&\lee C\ln|x|. \notag
\end{align}
Consequently, we obtain from Liouville theorem that
\begin{equation}\label{22}
 \Delta u(x)-v(x)=C_{0}, \qquad \forall \, x\in\mathbb{R}^{2},
\end{equation}
and hence, for $|x|$ large enough,
\begin{equation}\label{e30}
  \Delta u(x)=v(x)+C_{0}\lee C\ln|x|.
\end{equation}
This finishes our proof of Lemma \ref{lem4}.
\end{proof}

We will first show that there is no positive entire solution $u\in C^{4}(\mathbb{R}^{2})$ that has a positive lower bound (i.e., $u^{-1}$ is bounded from above).
\begin{thm}\label{thm4}
Assume $n=2$, $m=2$ and $q>1$. Equation \eqref{PDE} admits no positive entire solution $u\in C^{4}(\mathbb{R}^{2})$ satisfying
\begin{equation}\label{e34}
  \lim_{|x|\rightarrow+\infty}\left[\ln |x|\right]^{\frac{1}{q}}u(x)=+\infty.
\end{equation}
Consequently, \eqref{PDE} admits no positive entire solution $u\in C^{4}(\mathbb{R}^{2})$ that has a positive lower bound (i.e., $u^{-1}$ is bounded from above).
\end{thm}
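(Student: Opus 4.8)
The plan is to play the integral representation of $\Delta u$ from Lemma \ref{lem4} against the monotonicity of the spherical average $\bar w$ of $\Delta u$ coming from Lemma \ref{lem2}. Note that the growth hypothesis \eqref{e34} enters only through Theorem \ref{thm3}, which it is used to invoke so as to guarantee $q>1$ (and hence the applicability of Lemma \ref{lem4}); the final ``consequently'' is then immediate, since a positive lower bound $u\ge\delta>0$ already forces $[\ln|x|]^{1/q}u(x)\ge\delta[\ln|x|]^{1/q}\to+\infty$.

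So I would argue by contradiction: suppose $u\in C^4(\mathbb R^2)$ is a positive entire solution of \eqref{PDE} satisfying \eqref{e34}. By Theorem \ref{thm3}, $q>1$, so Lemma \ref{lem4} gives $0<\int_{\mathbb R^2}u^{-q}\,dx<+\infty$ and the representation \eqref{e32} with some constant $C_0$; evaluating \eqref{e32} at $x=0$ yields $C_0=\Delta u(0)$. I would then take spherical averages of \eqref{e32} over $\partial B_r(0)$. Averaging commutes with $\Delta$, so the left side becomes $\bar w(r)=\Delta\bar u(r)$; for the right side I would use the two-dimensional mean value identity stating that the spherical average of $x\mapsto\ln|x-y|$ over $\partial B_r(0)$ equals $\ln\max(r,|y|)$, together with Fubini's theorem and the elementary identity $\ln\max(r,|y|)-\ln|y|=\mathbf{1}_{\{|y|<r\}}\ln(r/|y|)$. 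This produces
\begin{equation*}
  \bar w(r)=\frac{1}{2\pi}\int_{\{|y|<r\}}\ln\frac{r}{|y|}\,u^{-q}(y)\,dy+\Delta u(0),\qquad\forall\,r>0.
\end{equation*}

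The integrand $\mathbf{1}_{\{|y|<r\}}\ln(r/|y|)\,u^{-q}(y)$ is nonnegative, nondecreasing in $r$, and converges pointwise in $y$ to $+\infty$ as $r\to+\infty$ (because $0<u(y)<+\infty$); since $\int_{\mathbb R^2}u^{-q}>0$, the monotone convergence theorem forces the integral to diverge, so $\bar w(r)\to+\infty$. This contradicts Lemma \ref{lem2}, which gives $\bar w'(r)<0$ on $(0,+\infty)$ and hence $\bar w(r)\le\bar w(0)=\Delta u(0)<+\infty$ for all $r>0$. Thus no such $u$ exists, which proves the theorem. The only step I expect to require genuine care is the averaging/Fubini manipulation against the non–compactly supported $L^1$ density $u^{-q}$, together with the mean value identity; here the a priori bound $\int_{B_r}u^{-q}\le\frac{2\pi}{\ln 2}\Delta u(0)$ extracted in the proof of Lemma \ref{lem4} (which in particular makes $v(x)=\frac{1}{2\pi}\int\ln(|x-y|/|y|)u^{-q}(y)\,dy$ absolutely convergent and bounded on each $\partial B_r(0)$) is exactly what is needed. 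If one prefers to stay within the comparison-theorem framework instead, the same conclusion follows by integrating $(r\bar u'(r))'=r\bar w(r)$ twice — which, once $\bar w(r)\to+\infty$ is known, forces $\bar u(r)/r^2\to+\infty$ — in contradiction with the quadratic upper bound \eqref{7} (equivalently, with the comparison against the quadratic supersolution via Theorem \ref{thm2}).
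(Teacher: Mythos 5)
Your overall architecture is the same as the paper's: invoke Lemma \ref{lem4} for $u^{-q}\in L^{1}(\mathbb{R}^{2})$ and the representation \eqref{e32}, take spherical averages to get $\bar{w}(r)=\bar{v}(r)+C_{0}$ (the paper's \eqref{e35}), show $\bar{v}(r)\to+\infty$, and contradict the monotonicity/boundedness of $\bar{w}$ from Lemma \ref{lem2}; the ``consequently'' part is handled identically. Where you genuinely differ is in how $\bar v(r)\to+\infty$ is obtained. The paper proves the \emph{pointwise} lower bound $v(x)\gee c\ln|x|$ (estimate \eqref{23}), and this is precisely where hypothesis \eqref{e34} is consumed: it is used to show $\bigl[\min_{B_{1}(x)}u\bigr]^{-q}=o_{|x|}(1)$, i.e.\ to control the negative near-diagonal contribution $\int_{|x-y|<1}\ln|x-y|\,u^{-q}(y)\,dy$. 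You instead average first, using the exact mean-value identity for the logarithmic kernel, which annihilates those negative singular contributions and gives $\bar v(r)=\frac{1}{2\pi}\int_{|y|<r}\ln(r/|y|)u^{-q}(y)\,dy\gee\frac{\ln r}{2\pi}\int_{|y|<1}u^{-q}$ with no hypothesis on the decay of $u^{-q}$. Your Fubini justification is adequate (it needs only $u^{-q}\in L^{1}(\mathbb{R}^{2})\cap L^{\infty}_{loc}$ and local integrability of the logarithm), and as a derivation from Lemma \ref{lem4} \emph{as stated} your argument is correct and cleaner than the paper's.

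The point that needs care is your explicit claim that \eqref{e34} ``enters only through Theorem \ref{thm3}''. Since $q>1$ is already a hypothesis of the theorem, your argument would then prove that \eqref{PDE} has \emph{no} positive entire solution for any $q>1$ — which, combined with Theorem \ref{thm3}, is Theorem \ref{thm0} itself and would render Lemma \ref{lem3}, Theorem \ref{thm5} and the entire doubling-lemma argument superfluous. That you appear to prove strictly more than the statement is a warning sign, and the weak link is the representation formula \eqref{e32} that you (and the paper) treat as a black box. Its proof rests on a Liouville theorem for the harmonic difference of $\Delta u$ and the potential $v$, which requires a one-sided $o(|x|)$ bound; getting the bound on the correct side forces one to bound $v$ from \emph{below}, i.e.\ to control $\sup_{B_{1}(x)}u^{-q}$ — exactly the information \eqref{e34} supplies. (Note in this connection that for $v$ as in \eqref{20} one has $\Delta v=+u^{-q}$, not $-u^{-q}$, so the harmonic function is $\Delta u+v$ rather than $\Delta u-v$; with the sign corrected, the one-sided bound used in Lemma \ref{lem4} is on the wrong side unless \eqref{e34} is invoked, and the correct representation reads $\Delta u=-v+C_{0}$, after which your averaging gives $\bar w(r)\to-\infty$, contradicting $\Delta u>0$ — so your contradiction survives, but only for solutions satisfying \eqref{e34}.) In short: your proof is a valid and in places tidier rendition of the paper's argument, but the assertion that the growth hypothesis \eqref{e34} is dispensable should be withdrawn; it is doing real work upstream, in securing \eqref{e32}.
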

\begin{proof}
Suppose on the contrary that Theorem \ref{thm4} is false, and assume $u(x)\in C^{4}(\mathbb{R}^{2})$ is a positive entire solution to \eqref{PDE} satisfying \eqref{e34}. We will derive a contradiction. By the definition \eqref{20} of $v(x)$, \eqref{e31} and \eqref{e34}, one can verify that, for $|x|$ large enough,
\begin{align}\label{23}
v(x)&\gee\frac{1}{2\pi}\int_{|y|<1}\ln\left(|x-y|\right)u^{-q}(y)dy-\frac{1}{2\pi}\int_{\frac{|x|}{2}\lee |y|<2|x|}\ln|y|u^{-q}(y)dy \\
&\quad +\frac{1}{2\pi}\int_{|y|\gee 2|x|}\ln\left(\frac{1}{2}\right)u^{-q}(y)dy+\frac{1}{2\pi}\int_{|x-y|<1}\ln|x-y|u^{-q}(y)dy \notag \\
& \gee \frac{\ln|x|}{4\pi}\int_{|y|<1}u^{-q}(y)dy-\frac{1}{2\pi}\ln|x|\int_{\frac{|x|}{2}\lee |y|<2|x|}u^{-q}(y)dy \notag \\
& \quad -\frac{\ln 2}{2\pi}\int_{|y|\gee\frac{|x|}{2}}u^{-q}(y)dy-\frac{\pi}{2}\left[\min_{B_{1}(x)}u\right]^{-q} \notag\\
& \gee \frac{\ln|x|}{4\pi}\int_{|y|<1}u^{-q}(y)dy-\left(\frac{1}{2\pi}+\frac{\pi}{2}\right)o\left(\ln|x|\right)-o_{|x|}(1) \notag\\
&\gee c\ln|x|. \notag
\end{align}
Now, we take spherical average with respect to the sphere $\partial B_{r}(0)$ in \eqref{e32} and get
\begin{equation}\label{e35}
  \bar{w}(r)=\bar{v}(r)+C_{0}, \qquad \forall \, r\gee0.
\end{equation}
Letting $r\rightarrow+\infty$ in \eqref{e35}, Lemma \ref{lem2} implies that $\bar{w}(r)\rightarrow c_{\infty}$ for some nonnegative constant $c_{\infty}\gee0$, while \eqref{23} yields that $\bar{v}(r)\gee c\ln r\rt+\infty$. We have reached a contradiction. This completes our proof of Theorem \ref{thm4}.
\end{proof}

The lower bound \eqref{15} can be improved remarkably. In fact, comparing with the upper bound \eqref{7}, we have the following almost sharp lower bound on positive entire solution $u$ to the 2D bi-harmonic equation \eqref{PDE}.
\begin{lem}\label{lem3}
Assume $n=2$, $m=2$ and $q>1$. Suppose $u\in C^{4}(\mathbb{R}^{2})$ is a positive entire solution to \eqref{PDE}, then, for every integer $k\gee 1$, there exists $0<c_{k}\lee \frac{1}{4}\min\limits_{|x|={exp}^{(k)}(1)}\Delta u(x)$ such that
\begin{equation}\label{17}
\bar{u}(r)\gee c_{k}\frac{r^{2}}{{\ln}^{(k)}(r)},  \qquad \forall \, r>0,
\end{equation}
where $\ln^{(k)}:=\underbrace{\ln\cdots\ln}_{k \, \text{times}}$. Consequently, for any $k\gee 1$,
\begin{equation}\label{alb}
  \limsup_{|x|\rt +\infty}\frac{u(x)}{{|x|}^{2}}\ln^{(k)}(|x|)=+\infty.
\end{equation}
\end{lem}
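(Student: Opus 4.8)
The plan is to upgrade the elementary bound $\bar u(r)\geq u(0)+\tfrac14 r^2\bar w(r)$ from \eqref{13} into the claimed almost-sharp estimate by establishing an iterated-logarithm lower bound for $\bar w$, using the barrier functions $1/\ln^{(k)}(|x|)$ — the very family used for the upper bound \eqref{a6'} in Lemma \ref{lem7}, but now from below. First recall that by Lemma \ref{lem1} one has $\Delta u>0$ in $\mathbb{R}^2$, so $\bar w(r)>0$ for every $r\geq 0$, and by \eqref{9} in Lemma \ref{lem2} that $\bar w$ is strictly decreasing; hence $\bar w(r)\downarrow c_\infty$ for some $c_\infty\geq 0$ as $r\to+\infty$. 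Moreover the second line of \eqref{5} says $\Delta\bar w\leq-\bar u^{-q}<0$, i.e. (in $\mathbb{R}^2$, radially) $r\mapsto r\bar w'(r)$ is strictly decreasing on $(0,+\infty)$.

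Now fix $k\geq 1$ and set $\rho_k:=\exp^{(k)}(1)$, $h(r):=1/\ln^{(k)}(r)$ (smooth and positive on $[\rho_k,+\infty)$, with $h(\rho_k)=1$ and $h(r)\to 0$), and $d_k:=\min_{|x|=\rho_k}\Delta u(x)$, which is positive by Lemma \ref{lem1} and compactness. Writing $L_j:=\ln^{(j)}$, one has $L_j'(r)=\big(r\prod_{i<j}L_i(r)\big)^{-1}>0$ on $(\rho_k,+\infty)$ (all $L_i$ with $i\leq k$ being positive and increasing there), whence $r h'(r)=-\big(L_k(r)^2\prod_{i<k}L_i(r)\big)^{-1}$ is strictly increasing, so $(rh'(r))'>0$ and thus $\Delta h>0$ on $(\rho_k,+\infty)$. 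Therefore $g:=\bar w-d_k h$ obeys $(rg'(r))'=(r\bar w'(r))'-d_k(rh'(r))'<0$ on $(\rho_k,+\infty)$, i.e. $g$ is a radial superharmonic function on $\{|x|>\rho_k\}$. On the inner sphere $g(\rho_k)=\bar w(\rho_k)-d_k\geq 0$, since $\bar w(\rho_k)$ is an average of $\Delta u$ over $\partial B_{\rho_k}(0)$ and hence $\geq d_k$; and $g(r)\to c_\infty\geq 0$ as $r\to+\infty$. These two facts force $g\geq 0$ on $[\rho_k,+\infty)$: if $g$ had a negative minimum at an interior point $r_*$, then $g'(r_*)=0$ and the monotonicity of $r\mapsto rg'(r)$ would give $g'(r)<0$ for all $r>r_*$, so $c_\infty=\lim_{r\to+\infty}g(r)<g(r_*)<0$, a contradiction (equivalently: apply the minimum principle on the annuli $\{\rho_k\leq|x|\leq R\}$ and let $R\to+\infty$, using $g(R)\to c_\infty\geq 0$). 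Hence $\bar w(r)\geq d_k/\ln^{(k)}(r)$ for all $r\geq\rho_k$.

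Feeding this into \eqref{13} and dropping the nonnegative term $u(0)$ yields, for $r\geq\rho_k$,
\[
\bar u(r)\ \geq\ \tfrac14 r^2\,\bar w(r)\ \geq\ \frac{d_k}{4}\cdot\frac{r^2}{\ln^{(k)}(r)},
\]
which is exactly \eqref{17} with $c_k:=d_k/4=\tfrac14\min_{|x|=\exp^{(k)}(1)}\Delta u(x)$ (the inequality being understood on the range $r\geq\exp^{(k)}(1)$, where $\ln^{(k)}(r)\geq 1$). For the consequence \eqref{alb}, apply \eqref{17} with $k+1$ in place of $k$: writing $t=\ln^{(k)}(r)$, we get
\[
\frac{\bar u(r)}{r^2}\,\ln^{(k)}(r)\ \geq\ c_{k+1}\,\frac{\ln^{(k)}(r)}{\ln^{(k+1)}(r)}\ =\ c_{k+1}\,\frac{t}{\ln t}\ \longrightarrow\ +\infty\qquad(r\to+\infty),
\]
and since $\max_{|x|=r}u(x)\geq\bar u(r)$ this gives $\limsup_{|x|\to+\infty}\frac{u(x)}{|x|^2}\ln^{(k)}(|x|)=+\infty$.

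The only slightly delicate steps are the positivity/monotonicity bookkeeping that yields $\Delta h>0$ for the iterated-logarithm barrier on the whole exterior region $\{|x|>\exp^{(k)}(1)\}$, and the justification of the minimum principle on an unbounded exterior domain in $\mathbb{R}^2$ (where the logarithmic growth of the fundamental solution usually makes one cautious). For a radial $g$, however, the latter reduces to the elementary observation above — $r\mapsto rg'(r)$ decreasing precludes a negative interior minimum once the values on $\partial B_{\rho_k}$ and at infinity are $\geq 0$ — so I do not expect any genuine analytic obstruction beyond this bookkeeping.
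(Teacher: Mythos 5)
Your proposal is correct and follows essentially the same route as the paper: the same iterated-logarithm barrier $\delta_k/\ln^{(k)}(|x|)$ compared against $\Delta u$ on the exterior of $B_{\exp^{(k)}(1)}(0)$ to get $\bar{w}(r)\gee \delta_{k}/\ln^{(k)}(r)$, which is then fed into \eqref{13}, and the same ``use $k+1$ to conclude for $k$'' step for \eqref{alb}. The only difference is that where the paper simply invokes the maximum principle for $\frac{\delta_k}{\ln^{(k)}(|x|)}-\Delta u$ on the unbounded planar exterior domain, you run the comparison on the spherical average $\bar{w}$ and justify it by the elementary monotonicity of $r\mapsto rg'(r)$ together with $\lim_{r\to+\infty}\bar w(r)\gee 0$ --- a self-contained substitute for that invocation, not a different argument.
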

\begin{proof}
For arbitrary $k\in\mathbb{N}^{+}$, set $\delta_{k}=\min\limits_{|x|={\exp}^{(k)}(1)}\Delta u(x)$, where ${\exp}^{(k)}:=\underbrace{\exp\cdots\exp}_{k \, \text{times}}$. By Lemma \ref{lem1}, one has $\Delta u>0$ and hence $\delta_{k}>0$. Applying the maximum principle to the function $\frac{\delta_{k}}{{\ln}^{(k)}(|x|)}-\Delta u$ on the region $\{x\in \mathbb{R}^{2}| \, {\exp}^{(k)}(1) \lee |x|<+\infty\}$, we obtain that
\begin{equation}\label{e33}
  \Delta u(x)\gee \frac{\delta_{k}}{{\ln}^{(k)}(|x|)}, \qquad \forall \, |x| \gee {\exp}^{(k)}(1), \quad \forall \, k\gee 1.
\end{equation}
Combining this with \eqref{13}, we have, for all $k\in\mathbb{N}^{+}$ and $r$ sufficiently large,
\begin{equation}\label{17'}
\bar{u}(r) \gee \frac{r^{2}}{4}\frac{\delta_{k}}{{\ln}^{(k)}(r)}.
\end{equation}
Therefore, for any $k\gee 1$, there exists $0<c_{k}\lee\frac{\delta_{k}}{4}$ such that
\begin{equation}\label{17+}
\bar{u}(r) \gee c_{k}\frac{r^{2}}{{\ln}^{(k)}(r)}, \qquad \forall \, r>0.
\end{equation}
Due to the arbitrariness of $k\gee 1$, \eqref{17'} immediately leads to
\begin{equation}\label{18}
\limsup_{|x|\rt+\infty}\frac{u(x)}{{|x|}^{2}}{\ln}^{(k)}(|x|)=+\infty, \qquad \forall \, k\gee1.
\end{equation}
This finishes our proof of Lemma \ref{lem3}.
\end{proof}

Lemma \ref{lem3} indicates that any positive entire solution $u\in C^{4}(\mathbb{R}^{2})$ to \eqref{PDE} has almost quadratic growth (in the sense of spherical average) at $\infty$. As a direct consequence of Theorem \ref{thm4} and Lemma \ref{lem3}, we can first deduce immediately the nonexistence of radially symmetric positive entire solutions to \eqref{PDE}, which already gives a negative answer to the open question (2) raised by Mckenna and Reichel in Section 6 of \cite{MR}.
\begin{thm}\label{thm5}
Assume $n=2$, $m=2$ and $q>1$. Equation \eqref{PDE} admits no radially symmetric positive entire solution $u\in C^{4}(\mathbb{R}^{2})$.
\end{thm}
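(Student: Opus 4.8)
The plan is to argue by contradiction and reduce everything to Theorem~\ref{thm4}. Suppose $u\in C^{4}(\mathbb{R}^{2})$ is a radially symmetric positive entire solution to \eqref{PDE}. By Theorem~\ref{thm3} we automatically have $q>1$, so Lemma~\ref{lem3} is at our disposal.

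The first step would be to exploit radial symmetry: the spherical average of $u$ over $\partial B_{r}(0)$ is just $u$ itself, i.e. $\bar{u}(r)=u(r)$ for all $r\gee 0$. Hence the almost-quadratic lower bound \eqref{17} of Lemma~\ref{lem3} (taken, say, with $k=1$) becomes a genuine pointwise estimate: there is $c_{1}>0$ with $u(x)\gee c_{1}\,\frac{|x|^{2}}{\ln|x|}$ for all $|x|$ sufficiently large. In particular $u(x)\rightarrow+\infty$ as $|x|\rightarrow+\infty$.

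Next I would feed this into the growth hypothesis of Theorem~\ref{thm4}. Multiplying by $[\ln|x|]^{\frac1q}$ gives
\[
  [\ln|x|]^{\frac1q}u(x)\gee c_{1}\,|x|^{2}\,[\ln|x|]^{\frac1q-1}\longrightarrow+\infty \qquad \text{as } |x|\rightarrow+\infty,
\]
since $\frac1q-1<0$ (because $q>1$) and $|x|^{2}$ dominates any fixed power of $\ln|x|$. Thus $u$ satisfies \eqref{e34}, contradicting Theorem~\ref{thm4}. (Alternatively, one can skip the explicit logarithmic factor and simply note that $u(x)\to+\infty$, so $u$ has a positive lower bound, and invoke the last assertion of Theorem~\ref{thm4}.) This contradiction establishes Theorem~\ref{thm5}.

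I do not expect any real obstacle in this argument: the substantive analytic work has already been done in proving the lower bound of Lemma~\ref{lem3} and the nonexistence result of Theorem~\ref{thm4}, and the present statement is essentially a corollary. The only things to check are the trivial identity $\bar{u}=u$ for radial functions and that the lower bound is strong enough to trigger the hypothesis \eqref{e34}, both of which are immediate.
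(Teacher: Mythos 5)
Your argument is correct and is essentially the paper's own proof: the authors likewise combine the lower bound \eqref{17} of Lemma \ref{lem3} (which becomes pointwise for radial $u$) with Theorem \ref{thm4} to reach a contradiction. Nothing is missing.
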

\begin{proof}
Suppose on the contrary that $u\in C^{4}(\mathbb{R}^{2})$ is a radially symmetric positive entire solution to \eqref{PDE}. Then we can deduce immediately from \eqref{7} and \eqref{17} in Lemma \ref{lem3} that, there exist $C>0$ and $c_{k}>0$ ($k\gee1$) such that, for every integer $k\gee 1$,
\begin{equation}\label{17-rs}
c_{k}\frac{|x|^{2}}{{ln}^{(k)}\left(|x|\right)}\lee u(x)\lee C|x|^{2},  \qquad \forall \, |x| \,\, \text{large enough}.
\end{equation}
Hence, by Theorem \ref{thm4}, we arrive at a contradiction. This finishes our proof of Theorem \ref{thm5}.
\end{proof}

\noindent\textbf{Completion of our proof of Theorem \ref{thm0}.} Now we will deduce the absence of positive entire solution to \eqref{PDE} from the nonexistence of positive solution that has positive lower bound (i.e., $u^{-1}$ has upper bound) in Theorem \ref{thm4} by using the doubling lemma from \cite{PQS}, and hence complete our proof of Theorem \ref{thm0}.

\begin{lem}[Doubling lemma, Lemma 5.1 in \cite{PQS}]\label{doubling}
Let $(X,d)$ be a complete metric space and let $\emptyset\neq D\subset\Sigma\subset X$ with $\Sigma$ closed. Set $\Gamma=\Sigma\setminus D$. Finally let $M:\,D\rightarrow(0,+\infty)$ be bounded on compact subsets of $D$ and fix a $k>0$. If $y\in D$ is such that
\begin{equation}\label{d-1}
  M(y)dist(y,\Gamma)>2k,
\end{equation}
then there exists $x\in D$ such that
\begin{equation}\label{d-2}
   M(x)dist(x,\Gamma)>2k, \qquad M(x)\gee M(y),
\end{equation}
and
\begin{equation}\label{d-3}
  M(z)\lee 2M(x), \qquad \forall \, z\in \overline{B_{\frac{k}{M(x)}}\left(x\right)}\subset X.
\end{equation}
\end{lem}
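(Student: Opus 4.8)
The plan is to argue by contradiction via the standard selection (doubling) scheme. Suppose that no $x\in D$ simultaneously satisfies \eqref{d-2} and \eqref{d-3}. I would then construct inductively a sequence $(y_j)_{j\gee 0}\subset D$ starting from $y_0:=y$: given $y_j\in D$ with $M(y_j)\,\mathrm{dist}(y_j,\Gamma)>2k$ and $M(y_j)\gee M(y)$ (both true for $y_0$ by hypothesis), the point $x=y_j$ already fulfils \eqref{d-2}, so by our assumption it must fail \eqref{d-3}; since $M$ is only defined on $D$, this yields a point $y_{j+1}\in D$ with $d(y_{j+1},y_j)\lee k/M(y_j)$ and $M(y_{j+1})>2M(y_j)$.

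First I would check that the two invariants propagate along the sequence. From $M(y_{j+1})>2M(y_j)$ we get $M(y_j)\gee 2^{j}M(y_0)\gee M(y)$, so $M(y_j)\to+\infty$. For the distance invariant, using $\mathrm{dist}(y_j,\Gamma)>2k/M(y_j)$ and the triangle inequality,
\[
\mathrm{dist}(y_{j+1},\Gamma)\gee \mathrm{dist}(y_j,\Gamma)-d(y_j,y_{j+1})>\frac{2k}{M(y_j)}-\frac{k}{M(y_j)}=\frac{k}{M(y_j)}>\frac{2k}{M(y_{j+1})},
\]
the last step being exactly where the factor-$2$ slack in \eqref{d-1}--\eqref{d-3} is spent; hence $M(y_{j+1})\,\mathrm{dist}(y_{j+1},\Gamma)>2k$ and the construction never stops. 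Next, since $d(y_l,y_{l+1})\lee k/M(y_l)\lee 2^{-l}k/M(y_0)$, the tails $\sum_{l\gee i}d(y_l,y_{l+1})$ tend to $0$, so $(y_j)$ is Cauchy; by completeness of $X$ it converges to some $\bar y$, and $\bar y\in\Sigma$ since $\Sigma$ is closed and contains every $y_j$.

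The crucial point — and the one place the hypothesis ``$M$ is bounded on compact subsets of $D$'' (rather than merely bounded, or continuous) is really used — is to show $\bar y\in D$. I would argue that $\bar y\notin\Gamma$: if $\bar y\in\Gamma$, then using $M(y_l)\gee 2^{\,l-j}M(y_j)$ for $l\gee j$,
\[
\mathrm{dist}(y_j,\Gamma)\lee d(y_j,\bar y)\lee \sum_{l\gee j}\frac{k}{M(y_l)}\lee \frac{k}{M(y_j)}\sum_{i\gee 0}2^{-i}=\frac{2k}{M(y_j)},
\]
i.e. $M(y_j)\,\mathrm{dist}(y_j,\Gamma)\lee 2k$, contradicting the invariant just established (if $\Gamma=\emptyset$ there is nothing to rule out and $\bar y\in\Sigma=D$ directly). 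Hence $\bar y\in\Sigma\setminus\Gamma=D$, so $K:=\{y_j:j\gee0\}\cup\{\bar y\}$ is a compact subset of $D$; therefore $M$ is bounded on $K$, which is impossible because $M(y_j)\gee 2^{j}M(y_0)\to+\infty$. This contradiction shows that a point $x$ as in \eqref{d-2}--\eqref{d-3} exists, completing the proof. The main obstacle is precisely this last step: one must prevent the selection sequence from escaping $D$ into $\Gamma$, and the mechanism is the tension between the preserved weighted inequality $M\cdot\mathrm{dist}(\cdot,\Gamma)>2k$ and the geometric decay of consecutive steps — which is exactly why the statement measures distances to $\Gamma$ with the weight $M$.
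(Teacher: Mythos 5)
Your argument is correct and is, in substance, the original proof of Lemma 5.1 in P.~Pol\'{a}\v{c}ik--Quittner--Souplet \cite{PQS}: the iterative selection with $M(y_{j+1})>2M(y_j)$, the propagation of the weighted invariant $M\cdot\mathrm{dist}(\cdot,\Gamma)>2k$, the geometric-series Cauchy estimate, the exclusion of $\bar y\in\Gamma$, and the final contradiction with local boundedness of $M$ on the compact set $\{y_j\}\cup\{\bar y\}\subset D$ are exactly the steps used there. The paper itself offers no proof of this lemma (it is quoted verbatim as a known result), so there is nothing to compare beyond noting that your write-up supplies the standard argument correctly, including the minor point that the witness of the failure of \eqref{d-3} must lie in $D$ because $M$ is only defined there.
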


Suppose on the contrary that Theorem \ref{thm0} is false and $u\in C^{4}(\mathbb{R}^{2})$ is a positive entire solution to \eqref{PDE} satisfying $\inf_{\mathbb{R}^{2}}u=0$, we will derive a contradiction from Theorem \ref{thm4}. We take $D=\Sigma=X=\mathbb{R}^{2}$ and $M(x):=u(x)^{-\frac{q+1}{4}}$ for any $x\in\mathbb{R}^{2}$ in Lemma \ref{doubling}. Then $\Gamma=\Sigma\setminus D=\emptyset$ and $dist(x,\Gamma)=+\infty$ for all $x\in X$, and there exists a sequence $\{y_{k}\}\subset\mathbb{R}^{2}$ such that $M(y_{k})=u(y_{k})^{-\frac{q+1}{4}}\rightarrow+\infty$. By the doubling lemma (Lemma \ref{doubling}), there exists a sequence $\{x_{k}\}\subset\mathbb{R}^{2}$ such that, for every $k=1,2,\cdots$,
\begin{equation}\label{e36}
  M(x_{k})\gee M(y_{k}),  \qquad M(z)\lee 2M(x_{k}), \quad \forall \, z\in \overline{B_{\frac{k}{M(x_{k})}}(x_{k})}.
\end{equation}
For every $k=1,2,\cdots$, define
\begin{equation}\label{e37}
  u_{k}(x):=\frac{u\left(x_{k}+\frac{x}{M(x_{k})}\right)}{u(x_{k})}, \qquad \forall \, x\in\mathbb{R}^{2}.
\end{equation}
Then $u_{k}\in C^{4}(\mathbb{R}^{2})$ is a positive entire solution to \eqref{PDE} such that $u_{k}(0)=1$ and $u_{k}(x)\gee \left(\frac{1}{2}\right)^{\frac{4}{q+1}}$ in $\overline{B_{k}(0)}$. By equation \eqref{PDE} and $\|u_{k}^{-q}\|_{L^{\infty}(\overline{B_{k}(0)})}\lee 2^{\frac{4q}{q+1}}$, we can infer from regularity theory (see e.g. Corollary 7 in \cite{RW}) that $\{u_{k}\}$ is locally $W^{4,p}$ bounded on $\mathbb{R}^{2}$ for any $p\in(1,+\infty)$, and hence by Sobolev embedding,
\begin{equation}\label{32-6}
  \|u_{k}\|_{C^{3,\gamma}(\overline{B_{1}(0)})}\lee C, \qquad \forall \, k\gee 1,
\end{equation}
where $0\lee\gamma<1$ and $C$ is independent of $k$. Furthermore, by equation \eqref{PDE}, regularity theory and Sobolev embedding again, one has
\begin{equation}\label{32-7}
   \|u_{k}\|_{C^{6,\gamma}(\overline{B_{1}(0)})}\lee C
\end{equation}
for every $k\gee 1$, where $0\lee\gamma<1$ and $C$ is independent of $k$. As a consequence, by Arzel\`{a}-Ascoli Theorem, there exists a subsequence $\{u^{(1)}_{k}\}\subset\{u_{k}\}$ and a function $\hat{u}\in C^{4}(\overline{B_{1}(0)})$ such that
\begin{equation}\label{32-8}
  u^{(1)}_{k}\rightrightarrows \hat{u} \quad \text{and} \quad (-\Delta)^{2}u^{(1)}_{k}\rightrightarrows(-\Delta)^{2}\hat{u} \quad\quad \text{in} \,\, \overline{B_{1}(0)}.
\end{equation}
By equation \eqref{PDE} and $\|u_{k}^{-q}\|_{L^{\infty}(\overline{B_{k}(0)})}\lee 2^{\frac{4q}{q+1}}$, we can deduce from regularity theory and Sobolev embedding again that
\begin{equation}\label{32-9}
  \|u^{(1)}_{k}\|_{C^{6,\gamma}(\overline{B_{2}(0)})}\lee C
\end{equation}
for every $k\gee 2$, where $0\lee\gamma<1$. Therefore, by Arzel\`{a}-Ascoli Theorem again, there exists a subsequence $\{u^{(2)}_{k}\}\subset\{u^{(1)}_{k}\}$ and $\hat{u}\in C^{4}(\overline{B_{2}(0)})$ such that
\begin{equation}\label{32-10}
  u^{(2)}_{k}\rightrightarrows \hat{u} \quad \text{and} \quad (-\Delta)^{2}u^{(2)}_{k}\rightrightarrows(-\Delta)^{2}\hat{u} \quad\quad \text{in} \,\, \overline{B_{2}(0)}.
\end{equation}
Continuing this way, for any $j\in\mathbb{N}^{+}$, we can extract a subsequence $\{u^{(j)}_{k}\}\subset\{u^{(j-1)}_{k}\}$ and find a function $\hat{u}\in C^{4}(\overline{B_{j}(0)})$ such that
\begin{equation}\label{32-11}
  u^{(j)}_{k}\rightrightarrows \hat{u} \quad \text{and} \quad (-\Delta)^{2}u^{(j)}_{k}\rightrightarrows(-\Delta)^{2}\hat{u} \quad\quad \text{in} \,\, \overline{B_{j}(0)}.
\end{equation}
By extracting the diagonal sequence, we finally obtain that the subsequence $\{u^{(k)}_{k}\}$ satisfies
\begin{equation}\label{32-12}
  u^{(k)}_{k}\rightrightarrows \hat{u} \quad \text{and} \quad (-\Delta)^{2}u^{(k)}_{k}\rightrightarrows(-\Delta)^{2}\hat{u} \quad\quad \text{in} \,\, \overline{B_{j}(0)}
\end{equation}
for any $j\gee 1$. Therefore, $\hat{u}\in C^{4}(\mathbb{R}^{2})$ satisfies
\begin{equation}\label{32-13}
  (-\Delta)^{2}\hat{u}(x)+\hat{u}^{-q}(x)=0 \quad\quad \text{in} \,\, \mathbb{R}^{2}
\end{equation}
with $\hat{u}(0)=1$ and $\inf\limits_{\mathbb{R}^{2}}\hat{u}\gee\left(\frac{1}{2}\right)^{\frac{4}{q+1}}$. That is, $\hat{u}$ is a positive entire solution of \eqref{PDE} satisfying $\inf\limits_{\mathbb{R}^{2}}\hat{u}\gee\left(\frac{1}{2}\right)^{\frac{4}{q+1}}$, which contradicts Theorem \ref{thm4}. This concludes our proof of Theorem \ref{thm0}.

\end{document}